\numberwithin{equation}{section}
\theoremstyle{plain}
\newtheorem{theorem}{Theorem}[section]
\newtheorem{lemma}[theorem]{Lemma}
\newtheorem{proposition}[theorem]{Proposition}
\newtheorem{corollary}[theorem]{Corollary}
\newtheorem{ques}[theorem]{Question}
\theoremstyle{definition}
\newtheorem{example}[theorem]{Example}
\newtheorem{remark}[theorem]{Remark}
\newtheorem{definition}[theorem]{Definition}
\newcommand{\bH}{{\mathbb H}}
\def\Q{\mathbb Q}
\def\R{\mathbb R}
\def\Z{\mathbb Z}
\def\F{\mathbb F}
\title{Heegaard Floer Homology Of $L$-space Links With Two Components}
\author{Beibei Liu}
\address{Department of Mathematics, UC Davis, One shields Avenue, Davis CA 95616, USA}
\email{bxliu@math.ucdavis.edu}
\begin{document}

\begin{abstract}
We compute different versions of link Floer homology $HFL^{-}$ and $\widehat{HFL}$ for any $L$-space link with two components. The main approach is to compute the $h$-function of the filtered chain complex which is determined by the Alexander polynomials of every sublink of the $L$-space link. As an application, Thurston polytope and Thurston norm of any 2-component $L$-space link are explicitly determined by Alexander polynomials of the link and the link components. 
\end{abstract}

\maketitle

\section{Introduction}
In this article, we present a general method to compute the Heegaard Floer link homology for any $L$-space link with two components. Usually, it is very hard to compute the Heegaard Floer link homology $HFL^{-}$ and $\widehat{HFL}$. For $L$-space links, the computation is easier. Yajing Liu computed the link Floer homology $HFL^{-}$ for any $L$-space link with two components \cite{Liu}. We revisit his computation and compute link Floer homology $\widehat{HFL}$ of any $L$-space link with two components. As a consequence, we  compute the Thurston polytope and Thurston norm of the link complement.  Recall that for an $L$-space link with $r$ components with a given generic admissible multipointed Heegaard diagram, we can associate the \emph{generalized Floer complexes} $A^{-}(\mathbf{s})$  filtered by Alexander gradings. Here we work over $\F=\F_{2}$  and $\mathbf{s}\in \bH$ where $\bH$ is some $r$-dimensional lattice (see Definition 2.3 and \cite{MO}). If the link $L$ is an $L$-space link, there is an important result for the generalized Floer complexes:

\begin{proposition}\cite[Proposition 1.11]{Liu}
For any $L$-space link, $H_{\ast}(A^{-}(\mathbf{s}))=\F[[U]]$ with $\mathbf{s}\in \bH$.
\end{proposition}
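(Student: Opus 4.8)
The plan is to reduce the statement to the Manolescu--Ozsv\'ath large surgery formula for links \cite{MO}; this formula is the only substantial input, and the remainder is linear algebra over the power series ring $\F[[U]]$.

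First I would record the algebraic shape of the answer. The complex $A^-(\mathbf{s})$ is a finitely generated free chain complex over $\F[[U]]$, so $H_*(A^-(\mathbf{s}))$ is a finitely generated $\F[[U]]$-module and hence splits as $\F[[U]]^{a}\oplus T$, where $T$ is a finite-length $U$-torsion module. The proposition is thus equivalent to the two assertions $a=1$ and $T=0$, and I would prove each by comparing $H_*(A^-(\mathbf{s}))$ with the Heegaard Floer homology of a large surgery on $L$.

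Next I would invoke the large surgery theorem. For a framing $\Lambda$ that is sufficiently positive (depending on $\mathbf{s}$), \cite{MO} gives an $\F[[U]]$-equivariant isomorphism, up to an overall grading shift, $H_*(A^-(\mathbf{s}))\cong HF^-(S^3_\Lambda(L),\mathfrak{s})$, where $\mathfrak{s}$ is the spin$^c$ structure on $S^3_\Lambda(L)$ corresponding to $\mathbf{s}$ under the standard affine identification. Since $S^3_\Lambda(L)$ is a rational homology sphere, $HF^\infty(S^3_\Lambda(L),\mathfrak{s})\cong\F[U,U^{-1}]$ has rank one, which forces the free rank $a=1$. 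Moreover $L$ is an $L$-space link, so for $\Lambda$ large enough $S^3_\Lambda(L)$ is an $L$-space; by definition this means $HF^-(S^3_\Lambda(L),\mathfrak{s})\cong\F[[U]]$ carries no $U$-torsion, so $T=0$. Combining the two conclusions gives $H_*(A^-(\mathbf{s}))\cong\F[[U]]$.

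The main obstacle is making the appeal to \cite{MO} uniform in $\mathbf{s}$. The surgery formula holds only once $\Lambda$ is large relative to both the link and the chosen spin$^c$ structure, so strictly the argument fixes $\mathbf{s}\in\bH$ first and then chooses $\Lambda=\Lambda(\mathbf{s})$; I would need to check that the identification of $\mathrm{Spin}^c(S^3_\Lambda(L))$ with lattice points $\mathbf{s}\in\bH$ is a bijection compatible with enlarging $\Lambda$, and that the isomorphism provided by \cite{MO} really is $\F[[U]]$-equivariant so that torsion-freeness and the free rank transfer across it. Once these bookkeeping points are settled the conclusion is immediate, so the essential content of the proposition is precisely the definition of an $L$-space link together with the large surgery formula.
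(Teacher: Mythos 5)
Your argument is correct and is essentially the standard proof: the paper itself states this proposition without proof, citing \cite[Proposition 1.11]{Liu}, and the proof given there is exactly your reduction to the Manolescu--Ozsv\'ath large surgery formula combined with the definition of an $L$-space link. The bookkeeping points you flag (choosing $\Lambda$ after fixing $\mathbf{s}$, the affine identification of $\mathrm{Spin}^c(S^3_\Lambda(L))$ with $\bH$, and $U$-equivariance of the surgery isomorphism) are the right ones to check and all go through.
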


Here $U$ has homological grading $-2$. Define $-2h(\mathbf{s})$ as the homological grading of the unique generator in $H_{\ast}(A^{-}(\mathbf{s}))$. By the work of E. Gorsky, A. N\'emethi and Yajing Liu,  $h(\mathbf{s})$ is determined by  the Alexander polynomials $\Delta_{L}(t_{1}, t_{2}), \Delta_{L_{1}}(t)$ and $\Delta_{L_{2}}(t)$ for any $L$-space link with two components $L=L_{1}\cup L_{2}$ and $\mathbf{s}\in \bH$. For any 2-component $L$-space link,   there is  a spectral sequence which converges to $HFL^{-}(L, \mathbf{s})$ \cite{GN}. In the spectral sequence, the $E^{1}$-page is combinatorially determined by $h(\mathbf{s})$ and the spectral sequence  collapses at $E^{2}$-page \cite[Theorem 2.2.10]{GN} \cite{Liu}. 

The computation of $\widehat{HFL}(L, \mathbf{s})$ is more complicated. We introduce a bigraded ``iterated cone" complex  $(\mathfrak{C}(s_{1}, s_{2}), d+d_{1})$ in Section 3. There exists a spectral sequence associated to this bigraded complex where $E^{1}$-page is defined by $HFL^{-}$ and $E^{3}=\widehat{HFL}(L, s_{1}, s_{2})$.  Theorem \ref{thm3} shows that the $E^{1}$-page of this spectral sequence is $HFL^{-}(s_{1}+1, s_{2}+1)\oplus HFL^{-}(s_{1}, s_{2}+1)\oplus HFL^{-}(s_{1}+1, s_{2})\oplus HFL^{-}(s_{1}, s_{2})$ and the differential $d_{1}$ is induced by the actions of $U_{1}$ and $U_{2}$. Lemma \ref{U-action} in Section 3 indicates how  $U_{i}$ act on  the Heegaard Floer link homology $HFL^{-}(L, \mathbf{s})$ for any $\mathbf{s}\in \bH$. So we can compute $E^{2}$-page of the spectral sequence. If $d_{2}=0$, the spectral sequence collapses at $E^{2}$-page. If $d_{2}$ is nonzero, we need to use another strategy to compute $\widehat{HFL}(L, \mathbf{s})$. We first find all possible cases where $d_{2}$ may be nontrivial. In order to compute $\widehat{HFL}(L, \mathbf{s})$, we  use the symmetry of Heegaard Floer link homology:  $\widehat{HFL}(L, \mathbf{s})\cong \widehat{HFL}(L, \mathbf{-s})$ up to some grading shift  \cite[Equation 5]{os3}. In  Section 3, we find that in all cases where $d_{2}$ may be nontrivial, the  differential $d_{2}$ must vanish in the spectral sequence corresponding to $\widehat{HFL}(L, \mathbf{-s})$. Then we can compute $\widehat{HFL}(L, \mathbf{-s})$ and therefore $\widehat{HFL}(L, \mathbf{s})$. Thus  we  compute $\widehat{HFL}$ for all $L$-space links with two components and obtain the main theorem of this paper. 

\begin{theorem}
\label{thm 2}
For any $L$-space link $L=L_{1}\cup L_{2}$ with two components, $\widehat{HFL}(s_{1}, s_{2})$ is determined by the $h$-function and hence determined by the symmetrized Alexander polynomials $\Delta_{L}(t_{1}, t_{2}), \Delta_{L_{1}}(t)$ and $\Delta_{L_{2}}(t)$ and the linking number $lk$ of components $L_{1}$ and $L_{2}$. 
\end{theorem}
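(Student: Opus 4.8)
The plan is to split the statement into two logical stages. The second implication---that $h$, and hence $\widehat{HFL}(s_1,s_2)$, is recovered from the symmetrized Alexander polynomials $\Delta_L(t_1,t_2)$, $\Delta_{L_1}(t)$, $\Delta_{L_2}(t)$ and the linking number $lk$---is exactly the content of the Gorsky--N\'emethi--Liu result recalled in the introduction, so I may take the $h$-function to be a known combinatorial quantity. The real task is to show that $\widehat{HFL}(s_1,s_2)$ is determined by $h$. First I would record the chain of invariants that $h$ already controls: by \cite[Proposition 1.11]{Liu} one has $H_*(A^-(\mathbf{s}))=\F[[U]]$ with its generator in homological grading $-2h(\mathbf{s})$, and by the spectral sequence of \cite{GN} converging to $HFL^-(L,\mathbf{s})$---whose $E^1$-page is built combinatorially from $h$ and which collapses at $E^2$---the group $HFL^-(L,\mathbf{s})$ is itself determined by $h$ for every $\mathbf{s}\in\bH$.

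With $HFL^-$ in hand, the strategy is to access $\widehat{HFL}$ through the iterated cone complex $(\mathfrak{C}(s_1,s_2),d+d_1)$ of Section 3 and its spectral sequence with $E^3=\widehat{HFL}(L,s_1,s_2)$. By Theorem \ref{thm3} the $E^1$-page is the direct sum $HFL^-(s_1+1,s_2+1)\oplus HFL^-(s_1,s_2+1)\oplus HFL^-(s_1+1,s_2)\oplus HFL^-(s_1,s_2)$, each summand already known from $h$, while $d_1$ is induced by the $U_1$- and $U_2$-actions. I would then invoke Lemma \ref{U-action}, which expresses these actions on $HFL^-(L,\mathbf{s})$ purely in terms of $h$; consequently both the $E^1$-page and the map $d_1$, and therefore the entire $E^2$-page, are determined by the $h$-function.

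The main obstacle is the higher differential $d_2$, which is not forced by the $E^2$-page alone. When $d_2=0$ the sequence collapses and $\widehat{HFL}(s_1,s_2)\cong E^2$ is visibly determined by $h$, so it remains only to handle the finitely many shapes of the $E^2$-page on which $d_2$ could be nonzero; these I would enumerate explicitly from the ranks and bigradings of the four $HFL^-$ summands, as in Section 3. For each such shape the resolution comes from the symmetry $\widehat{HFL}(L,\mathbf{s})\cong\widehat{HFL}(L,-\mathbf{s})$ up to a grading shift \cite[Equation 5]{os3}: running the same spectral sequence at the reflected lattice point $-\mathbf{s}$, the $E^2$-page has a shape in which $d_2$ must vanish for grading reasons, so that dual computation collapses at $E^2$ and yields $\widehat{HFL}(L,-\mathbf{s})$ from $h$. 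Transporting the answer back through the symmetry then determines $\widehat{HFL}(L,\mathbf{s})$ as well, completing the argument. I expect the crux to be this case analysis---specifically, verifying that in every potentially nonvanishing configuration the reflected $E^2$-page genuinely leaves no room for a nontrivial $d_2$, which rests on reading off the precise bigradings of the four $HFL^-$ pieces from $h$.
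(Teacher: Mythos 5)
Your proposal follows essentially the same route as the paper: reduce to the $h$-function via the known results on $HFL^{-}$, run the spectral sequence of Theorem \ref{thm3} with $d_{1}$ computed from Lemma \ref{U-action}, enumerate the finitely many $E^{2}$-shapes where $d_{2}$ could be nonzero, and resolve them via the symmetry $\widehat{HFL}(L,\mathbf{s})\cong\widehat{HFL}(L,-\mathbf{s})$ together with $h(-\mathbf{s})=h(\mathbf{s})+|\mathbf{s}|$. The only cosmetic difference is that the paper disposes of several of the problematic configurations directly (the relevant generators of $HFL^{-}(s_{1}+1,s_{2}+1)$ already have nonzero image under $d_{1}$ and so do not survive to the $E^{2}$-page) rather than by reflection, but the decomposition, the key lemmas, and the crux of the case analysis are the same.
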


\begin{remark}
Heegaard Floer link homology depends on the orientation of the link. For any $L$-space link $L=L_{1}\cup L_{2}$, we need to give this link an orientation which will determine the linking number of the two link components $L_{1}$ and $L_{2}$. 
\end{remark}

Yajing Liu \cite{Liu} showed that rank$_{\F}(HFL^{-}(L, \mathbf{s}))\leq 2$. We show that $4$ is a bound for the rank of link Floer homology $\widehat{HFL}$ for any $L$-space link with two components and give examples for all possible ranks from $0$ to $4$ in Section 3. 

\begin{corollary}
\label{rank}
For any $L$-space link $L=L_{1}\cup L_{2}$ with two components and any  $\mathbf{s}\in \bH$, $\textup{rank}_{\F}(\widehat{HFL}(L, \mathbf{s}))\leq 4$. In particular, $\mid \chi(\widehat{HFL}(L, \mathbf{s}))\mid \leq 4$. 
\end{corollary}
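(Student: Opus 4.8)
The plan is to read the bound off the spectral sequence of Theorem \ref{thm3}. Fix $\mathbf{s}=(s_1,s_2)$; by that theorem there is a spectral sequence converging to $\widehat{HFL}(L,s_1,s_2)$ on its $E^3$-page whose first page is
\[
E^1=HFL^-(s_1+1,s_2+1)\oplus HFL^-(s_1,s_2+1)\oplus HFL^-(s_1+1,s_2)\oplus HFL^-(s_1,s_2),
\]
with $d_1$ induced by the actions of $U_1$ and $U_2$. Since the total rank of a page can only drop when one passes to its homology,
\[
\textup{rank}_{\F}\widehat{HFL}(L,\mathbf{s})=\textup{rank}_{\F}E^3\leq \textup{rank}_{\F}E^2\leq \textup{rank}_{\F}E^1 .
\]
By Liu's inequality $\textup{rank}_{\F}HFL^-(L,\mathbf{s})\leq 2$ each of the four summands of $E^1$ has rank at most $2$, so $\textup{rank}_{\F}E^1\leq 8$. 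This crude estimate bounds $\widehat{HFL}$ by $8$, and the whole point of the corollary is to improve $8$ to $4$.

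To sharpen the estimate I would use $d_1$. Arrange the four summands of $E^1$ as the vertices of a unit square whose horizontal edges are the $U_1$-maps $HFL^-(s_1+1,\ast)\to HFL^-(s_1,\ast)$ and whose vertical edges are the $U_2$-maps $HFL^-(\ast,s_2+1)\to HFL^-(\ast,s_2)$; then $d_1$ is the total differential of this square, and the general identity $\dim_{\F}H(C,d)=\dim_{\F}C-2\,\textup{rank}(d)$ gives $\textup{rank}_{\F}E^2=\textup{rank}_{\F}E^1-2\,\textup{rank}(d_1)$. Hence as soon as $\textup{rank}(d_1)\geq 2$ we get $\textup{rank}_{\F}E^2\leq 4$ and therefore $\textup{rank}_{\F}\widehat{HFL}(L,\mathbf{s})\leq 4$, because $E^3$ is a subquotient of $E^2$. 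This is exactly where Lemma \ref{U-action} is used: it identifies each of the four edge maps in terms of the $h$-function, so that the rank of $d_1$ can be read off from the local behaviour of $h$ at the corners. The generic situation is that when the corners are large enough to make $E^1$ close to $8$, the relevant $U_i$-maps are forced to be nonzero, so that $\textup{rank}(d_1)\geq 2$ and the bound follows immediately.

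The main obstacle is the degenerate configurations in which $d_1$ drops in rank, the extreme case being $d_1=0$ with all four corners of rank $2$, where $E^2$ has rank $8$. In such cases the bound can only be rescued by the second differential $d_2\colon HFL^-(s_1+1,s_2+1)\to HFL^-(s_1,s_2)$, which one must then show has rank $2$; this is precisely the subtle point already isolated in Section 3, where the symmetry $\widehat{HFL}(L,\mathbf{s})\cong\widehat{HFL}(L,-\mathbf{s})$ is invoked to control $d_2$ by passing to the $-\mathbf{s}$ picture. I would therefore finish by a case analysis over the possible values of $h$ at the four corners: the generic cases are handled by the $\textup{rank}(d_1)\geq 2$ argument above, and the finitely many degenerate cases by the $d_2$-analysis underlying Theorem \ref{thm 2}, in every case yielding $\textup{rank}_{\F}\widehat{HFL}(L,\mathbf{s})\leq 4$. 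The Euler characteristic statement then needs no extra work: for any finite-dimensional $\Z/2$-graded space $V$ one has $|\chi(V)|\leq\dim_{\F}V$, whence $|\chi(\widehat{HFL}(L,\mathbf{s}))|\leq\textup{rank}_{\F}\widehat{HFL}(L,\mathbf{s})\leq 4$.
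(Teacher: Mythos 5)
Your strategy---running the spectral sequence of Theorem \ref{thm3} over the whole square and bounding $\textup{rank}_{\F}E^{3}=\textup{rank}_{\F}E^{1}-2(\textup{rank}\,d_{1}+\textup{rank}\,d_{2})$---is not the paper's route, and as written it has a genuine gap, concentrated exactly in the ``degenerate case'' you isolate. First, the extreme configuration you single out ($d_{1}=0$ with all four corners of rank $2$) cannot occur: four corners of rank $2$ forces the Case 6 pattern of Figure 1 at every corner, which pins down the entire $3\times 3$ grid of $h$-values (this is Case $(3b)$ in the proof of Theorem \ref{thm 2}), and then Lemma \ref{U-action} makes $U_{1}$ injective on $HFL^{-}(s_{1}+1,s_{2}+1)$, so $\textup{rank}\,d_{1}\geq 2$ automatically. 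Second, and more seriously, the rescue mechanism you propose---showing $d_{2}$ has rank $2$---is not available: in every configuration where $d_{2}$ could a priori be nonzero, the analysis of Section 3.2 shows it either vanishes outright or the computation is transferred to $-\mathbf{s}$ where it vanishes; a rank-two $d_{2}$ never appears, so no rank is ever shed at the $E^{3}$ page. Your argument must therefore establish $\textup{rank}_{\F}E^{1}-2\,\textup{rank}\,d_{1}\leq 4$ directly in every configuration of the $h$-function, including the odd cases $\textup{rank}_{\F}E^{1}=5$ or $7$ where one needs $\textup{rank}\,d_{1}\geq 1$ or $\geq 2$ respectively; that case analysis over the full $3\times3$ grid is the entire content of the proof, and you have deferred all of it.

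The paper avoids this by never touching $d_{2}$ and never analyzing the whole square at once. It factors $\widehat{CFL}(s_{1},s_{2})$ as an iterated cone: from the short exact sequence
\begin{equation*}
0\rightarrow CFL^{-}(s_{1}+1, s_{2}+1) \stackrel{U_{1}}{\longrightarrow} CFL^{-}(s_{1}, s_{2}+1)\rightarrow C_{1}(s_{1}, s_{2}+1)\rightarrow 0
\end{equation*}
it first proves $\textup{rank}_{\F}H_{\ast}(C_{1})\leq 2$ for the intermediate quotient complex along a single $U_{1}$-edge. This needs only a three-case check: whenever the two adjacent $HFL^{-}$ groups have total rank $\geq 3$, Lemma \ref{U-action} forces $U_{1}$ to have rank at least one (in fact the case analysis gives $H_{\ast}(C_{1})$ of rank $0$ or $1$ there). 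Then $\widehat{CFL}(s_{1},s_{2})\cong C_{1}(s_{1},s_{2})/U_{2}(C_{1}(s_{1},s_{2}+1))$ gives $\textup{rank}_{\F}\widehat{HFL}\leq 2+2=4$. Reducing to one edge at a time is precisely what makes the estimate clean; if you want to keep your spectral-sequence framing, you should either import this two-step factorization or actually carry out the full case analysis on $d_{1}$, dropping the appeal to $d_{2}$. Your closing observation $|\chi(V)|\leq\dim_{\F}V$ is fine and matches the paper.
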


In Section 4, we present an application of Theorem \ref{thm 2}. It is known from the work of  P.Ozsv\'ath and Z.Szab\'o \cite{oss} that $\widehat{HFL}(L)$ detects the Thurston norm of the link complement. Recall that for any compact, oriented surface with boundary (maybe disconnected)  $F=\bigcup _{i=1}^{n} F_{i}$, define its \emph{complexity} as $$\chi_{-}(F)=\sum\limits_{\lbrace F_{i}\mid \chi(F_{i})\leq 0\rbrace} -\chi(F_{i}).$$ 
For any link $L\subseteq S^{3}$, and any homology class $h\in H_{2}(S^{3}, L)$, there exists a compact oriented surface $F$ with boundary embedded in $S^{3}-\textup{nd}(L)$ which represents this homology class (i.e $[F]=h$). So for any homology class $h\in H_{2}(S^{3}, L; \Z)$, we can assign a function:
$$x(h)=\min \limits_{F\hookrightarrow S^{3}-\textup{nb}(L), [F]=h} \chi_{-}(F).$$
This function can be naturally extended to  a semi-norm, the \emph{Thurston semi-norm}, denoted by $x: H_{2}(S^{3}, L; \R)\rightarrow \R$ \cite{oss}. The unit ball for the norm $x$ is called \emph{Thurston polytope}. By computing the convex hull of $\mathbf{s}\in \bH$ where $\widehat{HFL}(L, \mathbf{s})\neq 0$, which is also called \emph{link Floer homology polytope}, we can compute the dual Thurston polytope and thus Thurston norm by the work of  P.Ozsv\'ath and Z.Szab\'o \cite{oss}.  So the Thurston polytope and Thurston norm for any $2$-component $L$-space link $L=L_{1}\cup L_{2}$ are determined by Alexander polynomials of every sublink, but in a very nontrivial way.

\begin{theorem}
If $L=L_{1}\cup L_{2}$ is an $L$-space link with two components in $S^{3}$, then the Thurston norm of the link complement is determined by the Alexander polynomials $\Delta_{L}(t_{1}, t_{2}), \Delta_{L_{1}}(t) , \Delta_{L_{2}}(t)$ and the linking number of the two components $L_{1}$ and $L_{2}$. 
\end{theorem}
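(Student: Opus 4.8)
The plan is to combine Theorem \ref{thm 2} with the Thurston norm detection result of P.~Ozsv\'ath and Z.~Szab\'o \cite{oss}. Recall from \cite{oss} that the link Floer homology polytope, defined as the convex hull
$$B(L) = \mathrm{conv}\{\mathbf{s}\in\bH : \widehat{HFL}(L,\mathbf{s})\neq 0\}$$
of the support of $\widehat{HFL}$, is dual (up to the standard meridian correction terms) to the Thurston polytope; consequently the Thurston semi-norm $x:H_{2}(S^{3},L;\R)\rightarrow\R$ is recovered from $B(L)$ by a formula of the shape $x(h)+\sum_{i}|\langle\mu_{i},h\rangle| = \max_{\mathbf{s},\mathbf{s}'\in B(L)}\langle\mathbf{s}-\mathbf{s}',h\rangle$, where the $\mu_{i}$ are the meridians of the two components. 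Thus it suffices to show that $B(L)$, together with the meridian data, is determined by the stated invariants.

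First I would observe that the support of $\widehat{HFL}$ is pinned down by Theorem \ref{thm 2}: for every $\mathbf{s}\in\bH$ the group $\widehat{HFL}(L,\mathbf{s})$ is computed from the $h$-function, and the $h$-function is in turn determined by the symmetrized Alexander polynomials $\Delta_{L}(t_{1},t_{2}), \Delta_{L_{1}}(t), \Delta_{L_{2}}(t)$ and the linking number $lk$. In particular the condition $\widehat{HFL}(L,\mathbf{s})\neq 0$ is decided by this data, so the finite support $\{\mathbf{s}:\widehat{HFL}(L,\mathbf{s})\neq 0\}$ and hence its convex hull $B(L)$ depend only on the Alexander polynomials and $lk$.

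Next I would account for the remaining ingredients in the Ozsv\'ath--Szab\'o formula. The ambient lattice $\bH$ and the affine normalization in which $\widehat{HFL}$ is graded are fixed once $lk$ is specified, since the half-integer shifts in the Alexander gradings are governed by the linking number; and the meridian pairings $\langle\mu_{i},h\rangle$ are purely homological, depending only on the two-component structure of $L$. Feeding $B(L)$ and these data into the displayed formula yields $x(h)$ for every $h\in H_{2}(S^{3},L;\R)$, so the Thurston norm is determined by $\Delta_{L}, \Delta_{L_{1}}, \Delta_{L_{2}}$ and $lk$, as claimed.

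The genuinely hard work has already been carried out in Theorem \ref{thm 2}; given that computation, the present statement is essentially a formal consequence. The only point requiring care is to verify that the normalizations match, that is, that the Alexander grading convention for $\widehat{HFL}$ used here agrees with the one under which \cite{oss} establishes the duality, so that the support polytope $B(L)$ is placed correctly relative to the meridian correction terms. I expect this bookkeeping, rather than any new geometric input, to be the main (and indeed the only) obstacle.
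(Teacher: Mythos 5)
Your proposal is correct and follows the same route as the paper: Theorem \ref{thm 2} shows the support of $\widehat{HFL}(L,\mathbf{s})$ is determined by the Alexander polynomials and the linking number, and then the Ozsv\'ath--Szab\'o detection theorem (\cite[Theorem 1.1]{oss}, restated as Theorem \ref{thm4}) recovers the Thurston norm from the link Floer homology polytope together with the meridian correction terms. The only caveat, which the paper shares, is that Theorem \ref{thm4} is stated for links with no trivial components, so strictly speaking the case of a trivial (unknotted, unlinked) component needs separate comment.
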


P.Ozsv\'ath and Z.Szab\'o  point out that for any alternating link, its  Thurston polytope is dual to the Newton polytope of the multi-variable Alexander polynomial \cite{oss} and McMullen showed that the Newton polytope of the multi-variable Alexander polynomial of any link is contained in its dual Thurston polytope \cite{Mu} .  We compute the dual Thurston polytopes of two non-alternating $L$-space links with two components in Examples 4.3 and 4.4. They  both agree with the Newton polytopes of Alexander polynomials. A natural question arises: 

\begin{ques}
\label{ques}
For any $L$-space link with two components which is not a split union of two $L$-space knots, is the  Thurston polytope dual to the Newton polytope of its multi-variable Alexander polynomial?
\end{ques}

\begin{remark}
In Example 4.3, we present a $2$-component $L$-space link where supp$(\widehat{HFL})=\lbrace(s_{1}, s_{2})\in \bH | \widehat{HFL}(s_{1}, s_{2})\neq 0   \rbrace$ is larger than supp$(\chi(\widehat{HFL}))=\lbrace(s_{1}, s_{2})\in \bH \mid \chi(\widehat{HFL}(s_{1}, s_{2})) \neq 0    \rbrace$. But the convex hull of supp$(\widehat{HFL})$ is the same with the convex hull of supp$(\chi(\widehat{HFL}))$ since the lattice points $(s_{1}, s_{2})\in \bH$ for which $\chi(\widehat{HFL}(s_{1}, s_{2}))=0$ and $\widehat{HFL}(s_{1}, s_{2})\neq 0$ are inside of the convex hull of supp$(\chi(\widehat{HFL}))$. 
\end{remark}

For any split $L$-space link, the answer to Question \ref{ques} is negative since its Alexander polynomial vanishes, but its dual Thurston polytope is nonempty. Example 5.5 gives the link Floer homology polytope of the split union of two right-handed trefoils. Recall that the split union of two $L$-space knots is an $L$-space link \cite{Liu}, and the $h$-function of the link satisfies that  $h(s_{1}, s_{2})=h_{1}(s_{1})+h_{2}(s_{2})$ where $(s_{1}, s_{2})\in \bH$ and $h_{1}, h_{2}$ are $h$-functions of the link components $L_{1}$ and $L_{2}$ respectively. We compute $\widehat{HFL}$ for any split union of two $L$-space knots. In general, we compute $\widehat{HFL}$ for all $2$-component $L$-space links with Alexander polynomial $\Delta(t_{1}, t_{2})=0$. 

\begin{theorem}
\label{split}
For any $L$-space link $L=L_{1}\cup L_{2}$ with two components, if $\Delta_{L}(t_{1}, t_{2})=0$, $\widehat{HFL}(L, s_{1}, s_{2})\cong \widehat{HFL}(L_{1}\sqcup L_{2}, s_{1}, s_{2})\cong \widehat{HFL}(L_{1}, s_{1})\otimes \widehat{HFL}(L_{2}, s_{2})\otimes (\F\oplus \F_{-1})$ where $L_{1}\sqcup L_{2}$ denotes the split union of $L_{1}$ and $L_{2}$ and $(s_{1}, s_{2})\in \bH$. 

\end{theorem}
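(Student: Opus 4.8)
The plan is to reduce the computation to the $h$-function and then to the geometry of split unions. By Theorem \ref{thm 2} the homology $\widehat{HFL}$ of a two-component $L$-space link is determined by its $h$-function, so the first isomorphism will follow as soon as I show that $L$ and the split union $L_{1}\sqcup L_{2}$ have the same $h$-function. Here $L_{1}$ and $L_{2}$ are sublinks of an $L$-space link and hence are themselves $L$-space knots, so $L_{1}\sqcup L_{2}$ is a genuine two-component $L$-space link to which Theorem \ref{thm 2} applies, and its $h$-function equals $h_{1}(s_{1})+h_{2}(s_{2})$ by the splitting property recalled above. Thus the entire first isomorphism rests on the identity $h_{L}(s_{1},s_{2})=h_{1}(s_{1})+h_{2}(s_{2})$.

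To prove that identity I would argue as follows. First, the Torres condition $\Delta_{L}(t_{1},1)\doteq (t_{1}^{lk}-1)\,\Delta_{L_{1}}(t_{1})$ shows that $\Delta_{L}\equiv 0$ forces $lk=0$, since $\Delta_{L_{1}}(1)=\pm 1\neq 0$; in particular the Alexander lattice $\bH$ is the standard $\Z^{2}$, exactly as for the split union. Next I would invoke the formula of Gorsky--N\'emethi and Liu expressing $h_{L}$ through $\Delta_{L},\Delta_{L_{1}},\Delta_{L_{2}}$ and $lk$: this is the two-variable analogue of the torsion-coefficient formula $h_{K}(s)=\sum_{j\geq 1}j\,a_{s+j}$, and the only part of it that genuinely couples the coordinates $s_{1}$ and $s_{2}$ is carried by the two-variable polynomial $\Delta_{L}$. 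Setting $\Delta_{L}=0$ therefore makes the mixed second difference of $h_{L}$ vanish, so $h_{L}(s_{1},s_{2})=f(s_{1})+g(s_{2})$ for some functions $f,g$. Finally I would pin down $f$ and $g$ by the stabilization property of $h$-functions: as $s_{1}\to +\infty$ the generalized Floer complex $A^{-}(s_{1},s_{2})$ stabilizes to that of the sublink $L_{2}$, so $h_{L}(s_{1},s_{2})\to h_{2}(s_{2})$ and hence $g=h_{2}$; by symmetry $f=h_{1}$, giving $h_{L}=h_{1}+h_{2}$. This separation of $h_{L}$ and the identification of its summands with the component $h$-functions is the step where the hypothesis is really used, and is the main obstacle.

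For the second isomorphism I would compute $\widehat{HFL}$ of the split union directly. A split diagram is a connected sum of the two knot diagrams, so the generalized Floer complex factors as $A^{-}(s_{1},s_{2})\simeq A^{-}_{L_{1}}(s_{1})\otimes A^{-}_{L_{2}}(s_{2})$ together with the contribution of one free basepoint that the split-union diagram carries beyond a minimal two-component diagram, with $U_{1},U_{2}$ acting on the respective knot factors. Consequently the iterated cone complex $\mathfrak{C}(s_{1},s_{2})$ of Section~3, whose four corners are the groups $HFL^{-}$ at $(s_{1}+\epsilon_{1},s_{2}+\epsilon_{2})$ and whose differential is induced by $U_{1},U_{2}$ (Theorem \ref{thm3}, Lemma \ref{U-action}), splits as the tensor product of the two single-variable knot cones times the free-basepoint factor. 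A K\"unneth argument then yields $\widehat{HFL}(L_{1},s_{1})\otimes\widehat{HFL}(L_{2},s_{2})$ from the knot cones, while the free basepoint contributes the remaining factor $\F\oplus\F_{-1}$, concentrated in Maslov gradings $0$ and $-1$. The one delicate point is to track the Maslov and Alexander gradings so that this factor is exactly $\F\oplus\F_{-1}$, which I would verify against the symmetry $\widehat{HFL}(L,\mathbf{s})\cong\widehat{HFL}(L,-\mathbf{s})$ and against the baseline of the two-component unlink.
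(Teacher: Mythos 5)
Your proposal is correct, and for the first isomorphism it follows essentially the paper's route: both arguments reduce to the fact that $\widehat{HFL}$ of a two-component $L$-space link is determined by $\Delta_{L}$, $\Delta_{L_{1}}$, $\Delta_{L_{2}}$ and $lk$ (Theorem \ref{thm 2}), observe that $lk=0$ by the Torres condition, and conclude that $L$ and $L_{1}\sqcup L_{2}$ carry identical data. Your explicit unpacking --- that $\Delta_{L}\equiv 0$ kills the mixed term in the difference formulas of Section 2.2, so the second mixed difference of $h$ vanishes and the stabilization $h(s_{1},+\infty)=h_{1}(s_{1})$, $h(+\infty,s_{2})=h_{2}(s_{2})$ forces $h_{L}(s_{1},s_{2})=h_{1}(s_{1})+h_{2}(s_{2})$ --- is a correct and worthwhile elaboration, and it reproduces Lemma \ref{lem 1} for the split union. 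Where you genuinely diverge is the second isomorphism. The paper proves it (Proposition 5.4) entirely inside its $h$-function framework: using $h=h_{1}+h_{2}$ it lists the four possible local patterns of each knot's $h$-function, determines via the long exact sequence relating $HFK^{-}$ and $\widehat{HFK}$ which combinations give nonzero $\widehat{HFL}$, and verifies case by case that the answer is $\widehat{HFK}(L_{1},s_{1})\otimes\widehat{HFK}(L_{2},s_{2})\otimes(\F\oplus\F_{-1})$. You instead invoke a chain-level K\"unneth splitting of a split-union Heegaard diagram. That is a legitimate and shorter route, but it imports an external input (the behaviour of link Floer homology under disjoint union, where the extra $\F\oplus\F_{-1}$ comes from the additional $\alpha$/$\beta$ pair in the connected-sum region rather than from a free basepoint in the Manolescu--Ozsv\'ath sense --- a minor terminological slip) and still leaves you the grading bookkeeping you flag at the end. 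The paper's case analysis buys self-containedness: everything is derived from the $h$-function machinery already set up in Sections 2--3, at the cost of length.
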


\section*{ACKNOWLEDGEMENTS}
I  deeply appreciate Eugene Gorsky for introducing this interesting topic to me and his patiently teaching on Heegaard Floer homology, and also for his constant guidance and discussions during the project. I am also grateful to Allison Moore,  Yi Ni and Jacob Rasmussen for the useful discussions on $L$-space links. The paper is inspired by the work of Yajing Liu, and the project is partially supported by NSF-1559338.

\section{Heegaard Floer Link Homology}
\subsection{$L$-space Links}
In \cite{OS}, P.Ozsv\'ath and Z.Szab\'o introduced the concept of $L$-space.

\begin{definition}
A 3-manifold $Y$ is an $L$-space if it is a rational homology sphere and its Heegaard Floer homology has minimal possible rank: For any spin$^{c}$-structure $s$, $\widehat{HF}(Y, s)=\F$ has rank 1 and $HF^{-}(Y, s)$ is a free $\F[U]$-module of rank 1. 
\end{definition}

In \cite{GN2}, E. Gorsky and A. N\'emethi define $L$-space links in terms of large surgeries.

\begin{definition}
An $l$-component link $L\subseteq S^{3}$ is an $L$-space link if there exist integers $p_{1}, p_{2}, \cdots, p_{l}$ such that for all integers $n_{i}\geq p_{i}, 1\leq i\leq l$, the $(n_{1}, n_{2}, \cdots, n_{l})$-surgery $S^{3}_{n_{1}, n_{2}, \cdots, n_{l}}$ is an $L$-space. 
\end{definition}

The computation of Heegaard Floer link homology is not easy. However, $L$-space links have some nice properties and these make the computation of Heegaard Floer link homology easier. In particular, we only consider $L$-space links $L=L_{1}\cup L_{2}$ with two components in this article. 

For a $2$-component $L$-space link $L=L_{1}\cup L_{2}$ on  $S^{3}$, consider a generic admissible multi-pointed Heegaard diagram with each component $L_{i}$ having only two basepoints $w_{i}, z_{i}$. Recall that in \cite[Section 4]{MO},  we can associate a generalized Floer complex $A^{-}(s_{1}, s_{2})$ with $(s_{1}, s_{2})\in \bH (L)$ which is introduced by Manolescu and Ozsv\'ath (see Definition 2.3). This complex is a free $\F[U_{1}, U_{2}]$-module. The operations $U_{1}$ and $U_{2}$ are homotopic to each other on each $A^{-}(s_{1}, s_{2})$, cf.\cite{os2} and both have homological degree $-2$.

\begin{definition}
For an oriented link $L=L_{1}\cup L_{2}$ with two components, define $\bH(L)$ to be an affine lattice over $\Z^{2}$,
$$\bH(L)=\bH(L)_{1}\oplus \bH(L)_{2}, \quad \bH(L)_{i}=\Z+\dfrac{lk(L_{1}, L_{2})}{2} (i=1, 2)$$
where $lk(L_{1}, L_{2})$ denotes the linking number of $L_{1}$ and $L_{2}$. 

\end{definition}

By Proposition 1.1, for any $L$-space link $L$ with two components, $H_{\ast}(A^{-}(s_{1}, s_{2}))=\F[[U]]$ where $(s_{1}, s_{2})\in \bH$. Let $-2h(s_{1}, s_{2})$ denote the homological grading of the unique generator in $H_{\ast}(A^{-}(s_{1}, s_{2}))$. The function $h(s_{1}, s_{2})$ is the $HFL$-weight function of an $L$-space link defined in \cite{GN}. In this article, we will call it $h$-function. On each $A^{-}(s_{1}, s_{2})$, the operations $U_{1}$ and $U_{2}$ are homotopic, and we denote them by $U$. 

\begin{lemma}\cite[Lemma 2.2.3]{GN}
\label{2.4}
Let $\mathbf{e_{1}}=(1, 0)$ and $\mathbf{e_{2}}=(0, 1)$. For any  $\mathbf{s}=(s_{1}, s_{2})\in \bH$, there exist  inclusions $ j: A^{-}(s_{1}, s_{2})\hookrightarrow A^{-}(\mathbf{s}+\mathbf{e_{i}})$ for $i=1$ or $i=2$ which induce injections on homology as follows.
$$j_{\ast}: H_{\ast}(A^{-}(s_{1}, s_{2}))\rightarrow H_{\ast}( A^{-}(\mathbf{s}+\mathbf{e_{i}}))$$
where $j_{\ast}=U_{i}^{\delta (i)}$ and $\delta(i)=0$ or $1$.
\end{lemma}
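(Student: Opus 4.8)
The plan is to realize $A^{-}(s_{1},s_{2})$ and $A^{-}(\mathbf{s}+\mathbf{e_{i}})$ as nested subcomplexes of the ambient link Floer complex $CFL^{-}(L)$ and then to read off $j_{\ast}$ purely from the $\F[[U]]$-module structure on homology. First I would recall that, in the Manolescu--Ozsv\'ath large surgery description, $A^{-}(\mathbf{s})$ is the subcomplex of $CFL^{-}(L)$ cut out by the two Alexander gradings, and that relaxing the bound in the $i$-th coordinate by one only enlarges this region. This produces the asserted inclusion of chain complexes $j\colon A^{-}(s_{1},s_{2})\hookrightarrow A^{-}(\mathbf{s}+\mathbf{e_{i}})$, which by construction preserves the homological grading.

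Next I would exhibit a partner map running the other way. Multiplication by $U_{i}$ on $CFL^{-}(L)$ is a chain map of homological degree $-2$, and the same bookkeeping of the defining inequalities that produced the inclusion shows $U_{i}\cdot A^{-}(\mathbf{s}+\mathbf{e_{i}})\subseteq A^{-}(s_{1},s_{2})$, so $U_{i}$ restricts to a chain map $A^{-}(\mathbf{s}+\mathbf{e_{i}})\to A^{-}(s_{1},s_{2})$. Since both $j$ and this $U_{i}$ are restrictions of honest maps on $CFL^{-}(L)$, the two composites $U_{i}\circ j$ and $j\circ U_{i}$ are nothing but multiplication by $U_{i}$ on $A^{-}(s_{1},s_{2})$ and on $A^{-}(\mathbf{s}+\mathbf{e_{i}})$ respectively.

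Now I would pass to homology using Proposition 1.1. Both $H_{\ast}(A^{-}(s_{1},s_{2}))$ and $H_{\ast}(A^{-}(\mathbf{s}+\mathbf{e_{i}}))$ are free $\F[[U]]$-modules of rank one, with $U=U_{1}\simeq U_{2}$, and their single generators lie in homological gradings $-2h(s_{1},s_{2})$ and $-2h(\mathbf{s}+\mathbf{e_{i}})$. Any graded $\F[[U]]$-module homomorphism between two such modules is either zero or multiplication by a power of $U$; as $j$ preserves the grading, $j_{\ast}$ is multiplication by $U^{\delta(i)}$ with $\delta(i)=h(s_{1},s_{2})-h(\mathbf{s}+\mathbf{e_{i}})$ whenever it is nonzero, and likewise $(U_{i})_{\ast}$ is multiplication by $U^{c}$ for some $c\ge 0$. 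The composite $U_{i}\circ j\colon A^{-}(s_{1},s_{2})\to A^{-}(s_{1},s_{2})$ is multiplication by $U_{i}$, which on $H_{\ast}(A^{-}(s_{1},s_{2}))=\F[[U]]$ is exactly multiplication by $U$; hence $U^{c}\cdot U^{\delta(i)}=U^{1}$, forcing $\delta(i)+c=1$ with $\delta(i),c\ge 0$. This simultaneously rules out $j_{\ast}=0$, so $j_{\ast}$ is injective because $U^{\delta(i)}$ acts without torsion on $\F[[U]]$, and pins $\delta(i)\in\{0,1\}$.

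The main obstacle is the geometric input in the second step: checking the containment $U_{i}\cdot A^{-}(\mathbf{s}+\mathbf{e_{i}})\subseteq A^{-}(s_{1},s_{2})$ directly from the defining inequalities of the generalized Floer complexes, and confirming that the two subcomplex maps are genuinely the restrictions of inclusion and of $U_{i}$ on $CFL^{-}(L)$, so that the composites really equal $U_{i}$. Everything after that is formal module algebra over $\F[[U]]$ together with the grading of the generator recorded by the $h$-function.
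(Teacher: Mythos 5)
The paper gives no proof of this lemma at all: it is imported verbatim from Gorsky--N\'emethi \cite{GN}, and the only in-text elaboration is Remark 2.5, which records exactly the relations $j_{\ast}(a)=U^{\delta(i)}b$ and $U_{i}(b)=U^{1-\delta(i)}a$ that your argument derives. Your proof is correct and is essentially the standard one behind that citation: once one knows that the composites $U_{i}\circ j$ and $j\circ U_{i}$ are multiplication by $U_{i}$, the fact that $H_{\ast}(A^{-}(\mathbf{s}))\cong\F[[U]]$ is free of rank one with $U_{1}\simeq U_{2}\simeq U$ acting injectively forces the factorization $U^{\delta(i)}\cdot U^{c}=U$ with $\delta(i),c\geq 0$, hence $\delta(i)\in\{0,1\}$ and $j_{\ast}$ injective. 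The only caveat is the one you flag yourself: in the Manolescu--Ozsv\'ath formulation the complexes $A^{-}(\mathbf{s})$ are not literally nested subcomplexes of $CFL^{-}(L)$ but come equipped with canonical chain maps playing the roles of $j$ and $U_{i}$ whose composites agree with multiplication by $U_{i}$ up to chain homotopy; that identification (which is what \cite{GN} and \cite{MO} supply) is all your homological-algebra step needs, so the argument goes through.
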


\begin{remark}
The actions $U_{i}$ induce maps $U_{i}: A^{-}(\mathbf{s}+\mathbf{e_{i}})\rightarrow A^{-}(\mathbf{s})$ for $i=1$ and $i=2$. The actions also induce maps on homology. By Proposition 1.1, $H_{\ast}(A^{-}(\mathbf{s}))\cong \F [[U]]$ for any $\mathbf{s}\in \bH$. Assume that $a, b$ are unique generators of $H_{\ast}(A^{-}(\mathbf{s}))$ and $H_{\ast}(A^{-}(\mathbf{s}+\mathbf{e_{i}}))$. Then $j_{\ast}(a)=U^{\delta(i)}b$ and $U_{i}(b)=U^{1-\delta(i)}a$
\end{remark}

\begin{corollary}
\label{coro1}
For any $L$-space link with two components and $\mathbf{s}\in \bH$, $h(\mathbf{s})=h(\mathbf{s}+\mathbf{e_{i}})$ or $h(\mathbf{s})=h(\mathbf{s}+\mathbf{e_{i}})+1$ where $i=1$ or $2$ and $\mathbf{e_{1}}=(1, 0)$ and $\mathbf{e_{2}}=(0, 1)$. 
\end{corollary}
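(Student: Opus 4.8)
The plan is to read the statement directly off the homological gradings supplied by Lemma \ref{2.4}, using only that $U$ has homological degree $-2$ and that the relevant inclusion is grading-preserving. First I would fix $\mathbf{s}\in\bH$ and $i\in\{1,2\}$ and take the inclusion $j:A^{-}(\mathbf{s})\hookrightarrow A^{-}(\mathbf{s}+\mathbf{e_{i}})$ guaranteed by Lemma \ref{2.4}. Let $a$ and $b$ be the unique free generators over $\F[[U]]$ of $H_{\ast}(A^{-}(\mathbf{s}))$ and $H_{\ast}(A^{-}(\mathbf{s}+\mathbf{e_{i}}))$ respectively. By the definition of the $h$-function, $a$ sits in homological grading $-2h(\mathbf{s})$ and $b$ in homological grading $-2h(\mathbf{s}+\mathbf{e_{i}})$, and by the Remark following Lemma \ref{2.4} we have $j_{\ast}(a)=U^{\delta(i)}b$ with $\delta(i)\in\{0,1\}$.

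The next step is the one grading count that does all the work. The inclusion $j$ is a chain map that preserves the homological (Maslov) grading: at the level of the multi-pointed Heegaard diagram the two complexes $A^{-}(\mathbf{s})$ and $A^{-}(\mathbf{s}+\mathbf{e_{i}})$ are built from the same intersection points, and $j$ is, under the identification of the two truncations, the identity on generators. Hence $j_{\ast}$ preserves homological grading, so $U^{\delta(i)}b$ lies in grading $-2h(\mathbf{s})$. On the other hand, since $U$ lowers the homological grading by $2$, the element $U^{\delta(i)}b$ lies in grading $-2h(\mathbf{s}+\mathbf{e_{i}})-2\delta(i)$. Equating the two descriptions of this grading yields
$$h(\mathbf{s})=h(\mathbf{s}+\mathbf{e_{i}})+\delta(i).$$
Since $\delta(i)$ takes only the values $0$ and $1$, this is precisely the asserted dichotomy: either $h(\mathbf{s})=h(\mathbf{s}+\mathbf{e_{i}})$ or $h(\mathbf{s})=h(\mathbf{s}+\mathbf{e_{i}})+1$.

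I expect the only point that genuinely requires justification to be the claim that $j$ is grading-preserving; once that is granted, the corollary is an immediate consequence of Lemma \ref{2.4} and the degree of $U$. If one prefers to avoid invoking the diagrammatic description of $j$, an alternative is to argue purely algebraically: $j_{\ast}$ is injective (Lemma \ref{2.4}) and $\F[[U]]$-linear, so it carries the generator $a$ to a grading-homogeneous element of $H_{\ast}(A^{-}(\mathbf{s}+\mathbf{e_{i}}))\cong\F[[U]]$, necessarily of the form $U^{k}b$; matching this with $j_{\ast}(a)=U^{\delta(i)}b$ forces $k=\delta(i)$, and the same grading bookkeeping as above then gives the result.
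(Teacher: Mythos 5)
Your proof is correct and is essentially the paper's own argument: the paper likewise reads off $-2h(\mathbf{s})=-2h(\mathbf{s}+\mathbf{e_{i}})-2\delta(i)$ from Lemma \ref{2.4} (using that the inclusion preserves homological grading and $U$ has degree $-2$) and concludes immediately. Your write-up merely makes explicit the grading-preservation of $j$, which the paper leaves implicit.
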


\begin{proof}
By the lemma, we have $-2h(\mathbf{s})=-2h(\mathbf{s}+\mathbf{e_{i}})-2\delta(i)$ where  $\delta (i)=0$ or $1$. So $h(\mathbf{s})=h(\mathbf{s}+\mathbf{e_{i}})$ or $h(\mathbf{s})=h(\mathbf{s}+\mathbf{e_{i}})+1$. 
\end{proof}

Next, we will revisit Yajing Liu's work about  how to use the $h$-function to compute $HFL^{-}(L)$ for any $L$-space link $L=L_{1}\cup L_{2}$ with two components \cite{Liu}. 

\begin{lemma} \cite[Lemma 2.2.9]{GN}
For any  $(s_{1}, s_{2})\in \bH$, the chain complex $CFL^{-}(s_{1}, s_{2})$ of the $L$-space link $L=L_{1}\cup L_{2}$ is quasi-isomorphic to the following ``iterated cone" complex:
\[\mathrm{CFL}^{-}(s_{1}, s_{2}) :\simeq \left[
\begin{tikzcd}
\mathrm{A}^{-}(s_{1}-1, s_{2}) \arrow{r}{i_{1}}  & \mathrm{A}^{-}(s_{1}, s_{2})  \\%
\mathrm{A}^{-}(s_{1}-1, s_{2}-1) \arrow{r}{i_{1}} \arrow[swap]{u}{i_{2}} & \mathrm{A}^{-}(s_{1}, s_{2}-1) \arrow[swap]{u}{i_{2}}
 \end{tikzcd}
\right] \]

where $i_{1}$ and $i_{2}$ are the inclusion maps in Lemma 2.4. 
\end{lemma}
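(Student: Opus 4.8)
The plan is to realize the four corner complexes as genuine subcomplexes of one ambient filtered complex and then to recognize the iterated cone as computing the associated graded piece of that filtration in multi-Alexander degree $(s_1,s_2)$, which is by definition $\mathrm{CFL}^-(s_1,s_2)$. Concretely, fix the admissible multi-pointed Heegaard diagram and let $\mathcal{C}$ be the full link Floer complex over $\F[U_1,U_2]$, equipped with its $\Z^2$-valued Alexander filtration $(A_1,A_2)$; write $\mathcal{F}_{(s_1,s_2)}\subseteq\mathcal{C}$ for the sublevel subcomplex generated by elements with $A_1\le s_1$ and $A_2\le s_2$. The first step is to record, from the Manolescu--Ozsv\'ath construction together with Lemma \ref{2.4}, that the generalized Floer complexes $A^-(\mathbf{s})$ sit inside $\mathcal{C}$ as these sublevel subcomplexes and that $i_1,i_2$ are precisely the sublevel inclusions $\mathcal{F}_{(s_1-1,s_2)}\hookrightarrow\mathcal{F}_{(s_1,s_2)}$ and $\mathcal{F}_{(s_1,s_2-1)}\hookrightarrow\mathcal{F}_{(s_1,s_2)}$. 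In particular $i_1,i_2$ are injective chain maps, and the square in the statement is a strictly commuting square of inclusions.

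The heart of the argument is then a short piece of homological algebra. For an injective chain map $f$ one has a quasi-isomorphism $\mathrm{Cone}(f)\simeq\coker(f)$, so iterating in the two coordinate directions identifies the total complex of the $2\times 2$ square, up to quasi-isomorphism, with the iterated cokernel
$$\mathcal{F}_{(s_1,s_2)}\big/\big(\mathcal{F}_{(s_1-1,s_2)}+\mathcal{F}_{(s_1,s_2-1)}\big).$$
Here the crucial elementary input is that, since the filtration is by the two independent Alexander gradings, $\mathcal{F}_{(s_1-1,s_2)}\cap\mathcal{F}_{(s_1,s_2-1)}=\mathcal{F}_{(s_1-1,s_2-1)}$; this guarantees that the two intermediate cokernels glue with no correction term and that the induced horizontal map on cokernels is again injective, so the final cone computes exactly this double quotient. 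The elements surviving in the double quotient are precisely those with $A_1=s_1$ and $A_2=s_2$, i.e. the associated graded summand of $\mathcal{C}$ in multi-Alexander degree $(s_1,s_2)$, which is $\mathrm{CFL}^-(s_1,s_2)$. This yields the asserted quasi-isomorphism.

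The step I expect to require the most care is the identification in the first paragraph: one must check that the abstractly defined complexes $A^-(\mathbf{s})$, which a priori are pinned down only up to quasi-isomorphism, can be chosen coherently so that the transition maps are honest inclusions making the square commute \emph{on the nose}, rather than merely up to homotopy. I would avoid this difficulty by working directly inside the master complex $\mathcal{C}$, where the sublevel inclusions are tautologically strict, and only afterwards pass to the small models $A^-(\mathbf{s})$ guaranteed by Proposition 1.1 and the $L$-space hypothesis; should one insist on working with the reduced models, the square commutes only up to homotopy and one must instead assemble the total complex with those homotopies as higher cube maps and invoke invariance of the iterated cone under filtered quasi-isomorphisms. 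The remaining points---that the cube maps correspond to the $U_1$- and $U_2$-type inclusions of Lemma \ref{2.4} and that the Maslov and Alexander gradings line up---are routine bookkeeping.
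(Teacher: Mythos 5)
Your argument is correct, and it is essentially the standard proof of this statement: the paper itself offers no proof but imports the lemma from \cite[Lemma 2.2.9]{GN}, where the argument is exactly the one you give, namely realizing the $A^{-}(\mathbf{s})$ as sublevel subcomplexes of the master filtered complex, replacing each cone on an injection by its cokernel, and using $A^{-}(s_{1}-1,s_{2})\cap A^{-}(s_{1},s_{2}-1)=A^{-}(s_{1}-1,s_{2}-1)$ to identify the iterated cone with the associated graded piece $A^{-}(s_{1},s_{2})/\bigl(A^{-}(s_{1}-1,s_{2})+A^{-}(s_{1},s_{2}-1)\bigr)=CFL^{-}(s_{1},s_{2})$. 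Your closing caveat about strict versus homotopy commutativity is well placed, and working inside the ambient complex is the right way to dispose of it.
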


Let $d$ denote the differential in the generalized Floer complex $A^{-}(s_{1}, s_{2})$ and $i=i_{2}-i_{1}$. The above ``iterated cone" complex has two differentials $d$ and $i$. The differential $d$ acts in vertices of the cube and $i$ acts in the edges. Define the cube grading $|K|$ of the upper-right corner of cube to be $0$. The differential $d$ decreases the homological grading by 1 and preserves the cube grading. The differential $i$ preserves the homological grading and decreases the cube grading by 1. The total grading is defined as the sum of homological grading and the cube grading.  Let $D=d+i$ and $K(s_{1}, s_{2})$ denote the above ``iterated cone" complex. There exists a spectral sequence whose $E^{\infty}$ page is the homology of the $K(s_{1}, s_{2})$ under the differential $D=d+i$. 

\begin{theorem}\cite[Theorem 2.2.10]{GN}
Let $L=L_{1}\cup L_{2}$ be an $L$-space link with two components. For any  $(s_{1}, s_{2})\in \bH$, there exists a spectral sequence which converges to $HFL^{-}(s_{1}, s_{2})$ and collapses at its $E^{2}$-page. Its $E^{2}$-page is isomorphic to  $H_{\ast}(H_{\ast}(A^{-}(s_{1}, s_{2}), d), i)$.
\end{theorem}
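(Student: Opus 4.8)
The plan is to realize the asserted spectral sequence as the one induced by the cube-grading filtration on the iterated cone complex $(K(s_1,s_2), D)$ with $D=d+i$, and then to exploit the shortness of this filtration together with the injectivity from Lemma \ref{2.4} to force collapse at $E^2$.

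First I would filter $K(s_1,s_2)$ by cube grading. The cube grading takes only the three values attained at the corner $A^-(s_1,s_2)$, at the two edge terms $A^-(s_1-1,s_2)$ and $A^-(s_1,s_2-1)$, and at the opposite corner $A^-(s_1-1,s_2-1)$, so this is a bounded filtration and the associated spectral sequence converges. Its limit is $H_*(K(s_1,s_2),D)$, which by the iterated cone description (the preceding lemma) is $HFL^-(s_1,s_2)$; here I would check that the quasi-isomorphism of that lemma is filtered, so the limit is genuinely $HFL^-(s_1,s_2)$ and not merely an associated graded. Since the internal differential $d$ preserves the cube grading while $i$ drops it by one, the $E^0$-differential is $d$ and the $E^1$-differential is the map induced by $i$ on homology. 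By Proposition 1.1 each vertex contributes $H_*(A^-(\cdot,\cdot),d)=\F[[U]]$, so the $E^1$-page is four copies of $\F[[U]]$ with differential $i_*=(i_2-i_1)_*$, and the $E^2$-page is exactly $H_*(H_*(A^-(s_1,s_2),d),i)$, as claimed.

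The crux is the collapse at $E^2$. Because the filtration has only three levels, the only possibly nonzero higher differential is $d_2$, and it must connect the two extreme levels, i.e.\ the two corners $A^-(s_1,s_2)$ and $A^-(s_1-1,s_2-1)$. I would show that the $E^2$-term at the corner $A^-(s_1-1,s_2-1)$ already vanishes. Indeed, this corner is the common source of both edge maps, so nothing maps into it under $i_*$, and its $E^2$-term is the kernel of $i_*$ there, namely $\ker(i_{1*})\cap\ker(i_{2*})$ inside $\F[[U]]$. By Lemma \ref{2.4} both $i_{1*}$ and $i_{2*}$ are injective (they are multiplication by powers of $U$ on $\F[[U]]$), so this intersection is zero. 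A spectral sequence differential with a vanishing endpoint is zero, hence $d_2=0$ and the sequence degenerates at $E^2$.

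The main obstacle is precisely this last point: setting up the cube-grading bookkeeping carefully enough that $d_2$ is seen to have the corner $A^-(s_1-1,s_2-1)$ as one of its two endpoints, and then locating the vanishing term that kills it. Phrasing the argument through the endpoint whose $E^2$-term is the intersection of the two kernels makes it robust to the choice of homological versus cohomological indexing, since only the fact that one endpoint's term is zero is needed. I would also verify the orientation and sign conventions in the cube, though these are harmless over $\F=\F_2$, and confirm that no differential $d_r$ with $r\ge 3$ can occur, which is automatic from the three-level filtration.
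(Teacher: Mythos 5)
Your proof is correct. Note first that the paper does not actually prove this statement: it is imported verbatim from \cite[Theorem~2.2.10]{GN}, so there is no in-paper argument to compare against line by line. Your reconstruction is the natural one and it is sound: filtering $K(s_1,s_2)$ by the cube grading (three levels, $0,1,2$) gives a convergent spectral sequence with $d_0=d$, $d_1=i_*$, and $E^2=H_*(H_*(A^-(s_1,s_2),d),i)$, and the only candidate higher differential is $d_2$ from the cube-grading-$2$ corner $A^-(s_1-1,s_2-1)$ to the cube-grading-$0$ corner $A^-(s_1,s_2)$. Your key step --- that the $E^2$-term at the source vanishes because nothing maps into that corner and $\ker(i_*)=\ker(i_{1*})\cap\ker(i_{2*})=0$ by the injectivity in Lemma~\ref{2.4} --- is exactly right (indeed either injectivity alone already suffices), so $d_2=0$ and the sequence degenerates. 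This mechanism is implicitly confirmed by the paper's own case-by-case computation in Section~2.1 and by Corollary~2.9: in every case $HFL^-(s_1,s_2)$ is spanned by $a$ and/or $b-d$, i.e.\ by classes at cube gradings $0$ and $1$, and nothing supported at the generator $c$ of the deep corner ever survives. Your caveat about the quasi-isomorphism being filtered is more caution than is strictly needed --- the spectral sequence converges to $H_*(K,D)$ intrinsically, and one then identifies that with $HFL^-(s_1,s_2)$ via the lemma --- and, as you say, signs are harmless over $\F_2$.
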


Thus $HFL^{-}(s_{1}, s_{2})$ is isomorphic to $H_{\ast}(H_{\ast}(A^{-}(s_{1}, s_{2}), d), i)$ in the spectral sequence of the complex $K(s_{1}, s_{2})$. By Proposition 1.1, for any $(s_{1}, s_{2})\in \bH$, $H_{\ast}(A^{-}(s_{1}, s_{2}), d)\cong \F[[U]][-2h(s_{1}, s_{2})]$ where $-2h(s_{1}, s_{2})$ is the homological grading of the unique generator in $H_{\ast}(A^{-}(s_{1}, s_{2}), d)$ and $U_{1}, U_{2}$ act as $U$, homotopic to each other on $A^{-}(s_{1}, s_{2})$, cf. \cite{os2}. To compute $HFL^{-}(s_{1}, s_{2})$, we just need to compute the homology of the  mapping cone of the inclusion map $i$:
\[ \begin{tikzcd}
\mathbb{F}[[U]][-2h(s_{1}-1, s_{2})][b] \arrow{r}{i_{1}}  & \mathbb{F}[[U]][-2h(s_{1}, s_{2})][a] \\%
\mathbb{F}[[U]][-2h(s_{1}-1, s_{2}-1)][c] \arrow{r}{i_{1}} \arrow[swap]{u}{i_{2}} & \mathbb{F}[[U]][-2h(s_{1}, s_{2}-1)][d] \arrow[swap]{u}{i_{2}}
\end{tikzcd}
\]
where $a, b, c, d$ denote the unique generators in $\mathbb{F}[[U]][-2h(s_{1}, s_{2})],\mathbb{F}[[U]][-2h(s_{1}-1, s_{2})],  \mathbb{F}[[U]][-2h(s_{1}-1, s_{2}-1)]$ and $\mathbb{F}[[U]][-2h(s_{1}, s_{2}-1)]$ respectively. Let $h=h(s_{1}, s_{2})$. By Corollary \ref{coro1}, there are 6 cases for the $h$-function corresponding to above mapping cone. 

\begin{figure}[H]
\begin{picture}(100,130)(140,40)
\put(5,60){\framebox(85,40)}
\put(75,68){\makebox(0,0){$h$}}

\put(75,90){\makebox(0,0){$h$}}

\put(25,68){\makebox(0,0){$h+1$}}

\put(20,90){\makebox(0,0){$h$}}

\put(50,50){\makebox(0,0)[I]{Case 4}}

\put(5,120){\framebox(85,40)}
\put(65,128){\makebox(0,0){$h$}}

\put(65,150){\makebox(0,0){$h$}}

\put(25,128){\makebox(0,0){$h$}}

\put(25,150){\makebox(0,0){$h$}}

\put(50, 110){\makebox(0,0)[I]{Case 1}}

\put(125,120){\framebox(85,40)}
\put(195,128){\makebox(0,0){$h+1$}}

\put(185,150){\makebox(0,0){$h$}}

\put(145, 128){\makebox(0,0){$h+1$}}

\put(150, 150){\makebox(0,0){$h$}}

\put(175, 110){\makebox(0,0)[I]{Case 2}}

\put(125,60){\framebox(85,40)}
\put(195,68){\makebox(0,0){$h+1$}}

\put(195,90){\makebox(0,0){$h$}}

\put(145,68){\makebox(0,0){$h+1$}}

\put(145,90){\makebox(0,0){$h+1$}}

\put(175,50){\makebox(0,0)[I]{Case 5}}

\put(245,120){\framebox(85,40)}
\put(310,128){\makebox(0,0){$h$}}

\put(310,150){\makebox(0,0){$h$}}

\put(265,128){\makebox(0,0){$h+1$}}

\put(265,150){\makebox(0,0){$h+1$}}

\put(290, 110){\makebox(0,0)[I]{Case 3}}

\put(245,60){\framebox(85,40)}
\put(315,68){\makebox(0,0){$h+1$}}

\put(310,90){\makebox(0,0){$h$}}

\put(265,68){\makebox(0,0){$h+2$}}

\put(265,90){\makebox(0,0){$h+1$}}

\put(290, 50){\makebox(0,0)[I]{Case 6}}
\end{picture}
\caption{Possible local behaviours of $h$-function}
\end{figure}
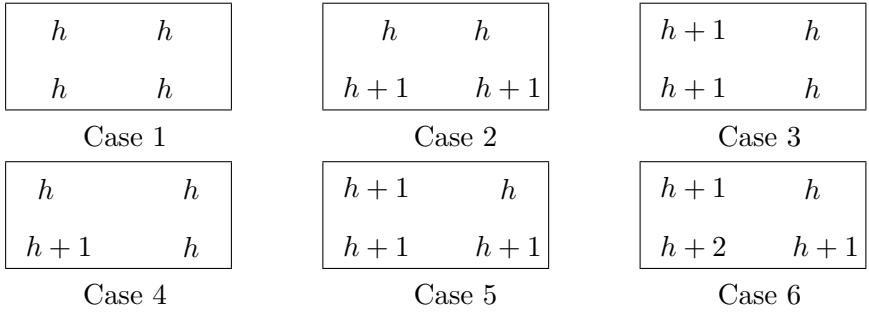

According to the $h$-function in Figure 1, we can compute the corresponding $HFL^{-}(s_{1}, s_{2})$ in each case. 

\textbf{Case 1:} $i(b)=a, i(c)=b-d, i(d)=a$ and $i(a)=0$, so $HFL^{-}(s_{1}, s_{2})=0$. 

\textbf{Case 2:} $i(b)=a, i(c)=Ub-d, i(d)=Ua$ and $i(a)=0$, so $HFL^{-}(s_{1}, s_{2})=0$. 

\textbf{Case 3:} $i(b)=Ua, i(c)=b-Ud, i(d)=a$ and $i(a)=0$, so $HFL^{-}(s_{1}, s_{2})=0$.

\textbf{Case 4:} $i(b)=a, i(c)=Ub-Ud, i(d)=a$ and $i(a)=0$, so $HFL^{-}(s_{1}, s_{2})=\langle b-d \rangle$. Both $b$ and $d$ have homological grading $-2h$ and cube grading $1$. The total grading of $b-d$ is $-2h+1$. Thus  $HFL^{-}(s_{1}, s_{2})=\F[-2h+1]$. 

\textbf{Case 5:} $i(b)=Ua, i(c)=b-d, i(d)=Ua$ and $i(a)=0$, so $HFL^{-}=\langle a \rangle$ with total grading $-2h$ . Thus $HFL^{-}(s_{1}, s_{2})=\F[-2h]$.

\textbf{Case 6:} $i(b)=Ua, i(c)=Ub-Ud, i(d)=Ua$ and $i(a)=0$, so $HFL^{-}(s_{1}, s_{2})=\langle a, b-d \rangle$. Here $a$ has total grading $-2h$ and $b-d$ has total grading $-2(h+1)+1=-2h-1$. Thus $HFL^{-}(s_{1}, s_{2})=\F[-2h]\oplus \F[-2h-1]$.

Moreover, we can also determine the Euler characteristics $\chi(HFL^{-}(s_{1}, s_{2}))$ in these six cases. In Case 1, Case 2, Case 3 and Case 6,  $\chi(HFL^{-}(s_{1}, s_{2}))=0$. In Case 4,  $\chi(HFL^{-}(s_{1}, s_{2}))=-1$ and in Case 5,  $\chi(HFL^{-}(s_{1}, s_{2}))=1$. Thus for any $L$-space link $L=L_{1}\cup L_{2}$ with two components, once the $h$-function is determined, we can compute $HFL^{-}(s_{1}, s_{2})$ for any  $(s_{1}, s_{2})\in \bH$. 

\begin{corollary}
For any $L$-space link with two components and $(s_{1}, s_{2})\in \bH$, $HFL^{-}(s_{1}, s_{2})$ is spanned by  $a$ or $b-d$ or both. $a$ has even grading and $b-d$ has odd grading. 
\end{corollary}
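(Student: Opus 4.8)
The plan is to read the corollary off directly from the six-case computation of $HFL^{-}(s_{1},s_{2})$ carried out just above, since by Corollary \ref{coro1} those six cases exhaust every admissible local configuration of the $h$-function. First I would confirm exhaustiveness. Writing $b'=h(s_{1}-1,s_{2})-h$, $d'=h(s_{1},s_{2}-1)-h$ and $c'=h(s_{1}-1,s_{2}-1)-h$, Corollary \ref{coro1} forces $b',d'\in\{0,1\}$ and constrains the lower-left corner by $0\leq c'-b'\leq 1$ and $0\leq c'-d'\leq 1$. Enumerating the admissible triples $(b',d',c')$ gives exactly six possibilities, namely $(0,0,0)$, $(0,0,1)$, $(0,1,1)$, $(1,0,1)$, $(1,1,1)$, $(1,1,2)$, which are precisely the six local pictures of Figure 1; hence no other local behaviour of $h$ around $(s_{1},s_{2})$ can arise.

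Next I would collect the outcomes of the mapping-cone homology computations recorded in Cases 1--6. In Cases 1, 2, 3 the cone is acyclic and $HFL^{-}(s_{1},s_{2})=0$; in Case 4 the homology is $\langle b-d\rangle$; in Case 5 it is $\langle a\rangle$; and in Case 6 it is $\langle a, b-d\rangle$. Thus in every case the surviving homology is spanned by $a$, by $b-d$, or by both, which is the first assertion of the corollary.

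For the grading claim I would invoke the bigrading conventions fixed above, where the total grading is the sum of the homological grading and the cube grading. The class $a$ always occupies cube grading $0$ and homological grading $-2h$, so its total grading is $-2h$, which is even. The class $b-d$ occupies cube grading $1$; in the two cases where it survives, the corners $b$ and $d$ carry equal homological gradings, namely $-2h$ in Case 4 and $-2(h+1)=-2h-2$ in Case 6, so $b-d$ is homogeneous with total grading $-2h+1$ or $-2h-1$ respectively, both odd. This establishes the parity statement.

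Since the per-case computations are already in hand, I do not expect a genuine obstacle: the corollary is essentially a summary of the foregoing analysis. The only two points needing care are verifying that the six configurations of Figure 1 are genuinely exhaustive, which the enumeration above settles, and checking that whenever $b-d$ represents a nonzero homology class the gradings of $b$ and $d$ coincide, so that $b-d$ is a homogeneous element and its odd parity is well defined.
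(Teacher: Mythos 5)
Your proposal is correct and follows essentially the same route as the paper, which states this corollary without a separate proof as a direct summary of the six-case computation of $HFL^{-}(s_{1},s_{2})$; your enumeration of the admissible triples $(b',d',c')$ just makes explicit the exhaustiveness that the paper asserts via Corollary \ref{coro1}, and your grading check matches the paper's Cases 4, 5 and 6.
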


\subsection{Alexander Polynomials of $L$-Space Links}

In this section, we mainly introduce Yajing Liu's work \cite{Liu} about how to determine the $h$-function of any $L$-space link $L=L_{1}\cup L_{2}$  with two components $L_{1}, L_{2}$ by the multi-variable Alexander polynomial $\Delta_{L}(t_{1}, t_{2})$, the Alexander polynomials $\Delta_{L_{1}}(t)$ and $\Delta_{L_{2}}(t)$. Recall that for any $L$-space link  $L=L_{1}\cup L_{2}$ with two components, we have : 
$$\Delta_{L}(t_{1}, t_{2})\doteq \sum\limits_{(s_{1}, s_{2})\in \bH} \chi(HFL^{-}(s_{1}, s_{2})) t_{1}^{s_{1}} t_{2}^{s_{2}} $$

\begin{equation}
\Delta_{L}(t, 1) \doteq \dfrac{1-t^{lk}}{1-t} \Delta_{L_{1}}(t)
\end{equation}
where $f \doteq g$ means that $f$ and $g$ differ by multiplication by units. For any  $L$-space link $L=L_{1}\cup L_{2}$ with two components, Yajing Liu also defined normalization of its Alexander polynomial  \cite{Liu}. 

\begin{definition}(\cite[Definition 5.12]{Liu})
Let the symmetrized Alexander polynomial of $L$ be $\Delta_{L}(x_{1}, x_{2})$ in the form of 
$$\Delta_{L}(t_{1}, t_{2})=\sum\limits_{i, j} a_{i,j}^{L} \cdot t_{1}^{i} \cdot t_{2}^{j}$$
where $t_{i}$ corresponds to the link component $L_{i}$ for $i=1$ and $i=2$. Let the symmetrized Alexander polynomials of $L_{1}$ and $L_{2}$ be $\Delta_{L_{1}}(t), \Delta_{L_{2}}(t)$ in the form of 
$$\dfrac{t}{t-1} \Delta_{L_{1}}(t)=\sum\limits_{k\in \Z} a_{k}^{L_{1}} \cdot t^{k}, \quad
\dfrac{t}{t-1} \Delta_{L_{2}}(t)=\sum\limits_{k\in \Z} a_{k}^{L_{2}}\cdot t^{k} $$
Let $(i_{0}, j_{0})$ be such that 
$$j_{0}=\textup{max} \left\lbrace j \in \Z +\dfrac{lk-1}{2} \mid a_{i,j}^{L}\neq 0 \right\rbrace \quad \textup{and} \quad  i_{0}=\textup{max} \left\lbrace i\in \Z +\dfrac{lk-1}{2} \mid a_{i, j_{0}}^{L}\neq  0 \right\rbrace$$  
Then these Alexander polynomials are called $\mathbf{normalized}$, if 

(1) the leading coefficient of $\Delta_{L_{i}}(t)$ is $1$ for both $i=1, 2$.

(2) if $a_{j_{0}-lk/2+1/2}^{L_{2}}=1$, then $a_{i_{0}, j_{0}}^{L}=1$; while if $a^{L_{2}}_{j_{0}-lk/2+1/2}=0$, then $a_{i_{0}, j_{0}}^{L}=-1$ where $lk$ is the linking number of $L_{1}$ and $L_{2}$. 
\end{definition}

For the normalized Alexander polynomials of the 2-component $L$-space link $L=L_{1}\cup L_{2}$, $\chi(HFL^{-})(s_{1}, s_{2})=a^{L}_{s_{1}-1/2, s_{2}-1/2}$ and $\chi(HFK^{-}(L_{i}, s))=a^{L_{i}}_{s}$ for $i=1, 2$ \cite{Liu}. Moreover, Yajing Liu gives the following formulas to determine the $h$-functions in \cite[equation (5.8)]{Liu}:
\begin{equation}
\label{h-function 1}
h(s_{1}, s_{2}-1)-h(s_{1}, s_{2})=a^{L_{2}}_{s_{2}-lk/2} - \sum\limits_{i=1}^{\infty} a^{L}_{s_{1}+i-1/2, s_{2}-1/2}=0 \quad\textup{or}\quad 1
\end{equation}

Similary
\begin{equation}
\label{h-fucntion 2}
h(s_{1}-1, s_{2})-h(s_{1}, s_{2})=a^{L_{1}}_{s_{1}-lk/2} - \sum\limits_{i=1}^{\infty} a^{L}_{s_{1}-1/2, s_{2}+i-1/2}=0 \quad\textup{or}\quad 1
\end{equation}

When $s_{1}\rightarrow +\infty$ or $s_{2}\rightarrow +\infty$
\begin{equation}
\label{h-function 3}
h(+\infty, s_{2})=h_{2}(s_{2}-lk/2)
\quad h(s_{1}, +\infty)=h_{1}(s_{1}-lk/2)
\end{equation}

\begin{equation}
\label{h-function 3}
h_{1}(s-1)-h_{1}(s)=a_{s}^{L_{1}}  
\quad h_{2}(s-1)-h_{2}(s)=a_{s}^{L_{2}}
\end{equation}
where $h_{1}(s_{1}-lk/2), h_{2}(s_{2}-lk/2)$ are the corresponding $h$-functions for link components $L_{1}$ and $L_{2}$ respectively and $s\in \Z$. For $s$ sufficiently large, $h_{1}(s)=h_{2}(s)=0$. By using the above formulas, we can compute $HFL^{-}(s_{1}, s_{2})$ for any $L$-space link $L=L_{1}\cup L_{2}$ with two components and $(s_{1}, s_{2})\in \bH$.

\begin{remark}
The link components $L_{1}$ and $L_{2}$ of the $2$-component $L$-space link are both $L$-space knots \cite[Lemma 1.10]{Liu}.

\end{remark}

\begin{corollary} \cite{ ND, GN, Liu}
For any $L$-space link $L=L_{1}\cup L_{2}$ with two components, $HFL^{-}(L)$ is determined by the Alexander polynomials $\Delta_{L}(t_{1}, t_{2}), \Delta_{L_{1}}(t)$ and $\Delta_{L_{2}}(t)$.

\end{corollary}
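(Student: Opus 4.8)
The plan is to assemble the two halves of the machinery built above: first that $HFL^-(L)$ can be read off from the $h$-function, and second that the $h$-function is itself reconstructible from $\Delta_L$, $\Delta_{L_1}$ and $\Delta_{L_2}$. So I would split the argument into a ``local'' step and a ``global recursion'' step.

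For the local step, I would observe that the spectral-sequence theorem of \cite{GN} identifies $HFL^-(s_1,s_2)$ with the homology $H_*(H_*(A^-(s_1,s_2),d),i)$, i.e.\ with the homology of the explicit mapping cone of the inclusions $i_1,i_2$ displayed above. By Corollary \ref{coro1} the four corner values $h(s_1,s_2)$, $h(s_1-1,s_2)$, $h(s_1,s_2-1)$, $h(s_1-1,s_2-1)$ differ by $0$ or $1$ along each edge of the square, and the case analysis (Cases 1--6) already computes this homology together with its total grading in each configuration. The upshot I would record is that the graded group $HFL^-(s_1,s_2)$ is a function of these four $h$-values alone; no further input is required once the $h$-function is known.

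For the global step, I would reconstruct $h$ from the Alexander data. Since each component is an $L$-space knot (Remark after the formulas), its one-variable $h$-function $h_i$ is determined by $\Delta_{L_i}$: the successive differences $h_i(s-1)-h_i(s)$ equal the normalized coefficients $a^{L_i}_s$, and the normalization $h_i(s)=0$ for $s$ large fixes the constant. The boundary behaviour of the two-variable $h$-function at infinity is then $h(s_1,+\infty)=h_1(s_1-lk/2)$ and $h(+\infty,s_2)=h_2(s_2-lk/2)$, so these limiting values are known. Finally, I would fix $s_1$ and recurse downward in $s_2$ using the difference formula for $h(s_1,s_2-1)-h(s_1,s_2)$, whose right-hand side involves only $a^{L_2}$ and the coefficients $a^{L}$ of $\Delta_L$; this propagates the known limit $h(s_1,+\infty)$ to every lattice point, and the symmetric formula does the same in the $s_1$ direction. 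Chaining the local step onto this shows $HFL^-(L)$ is determined by $\Delta_L$, $\Delta_{L_1}$, $\Delta_{L_2}$.

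The hard part is checking that this recursion is actually well-founded and unambiguous. Because $\Delta_L$ has finite support, the tail sum $\sum_{i=1}^\infty a^{L}_{s_1+i-1/2,\,s_2-1/2}$ is really a finite sum, and the differences $h(s_1,s_2-1)-h(s_1,s_2)$ vanish for $s_2$ large; this is what guarantees that the limit $h(s_1,+\infty)$ exists and can serve as the anchor of the downward recursion. The one genuine subtlety I would verify is the compatibility of the two recursions---that descending in $s_2$ and then in $s_1$ gives the same answer as the opposite order---so that $h$ is well-defined; this consistency is precisely what is encoded in Liu's formulas, and granting it the corollary follows immediately.
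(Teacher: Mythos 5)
Your proposal is correct and follows essentially the same route as the paper: the six-case computation in Section 2.1 shows $HFL^{-}(s_{1},s_{2})$ depends only on the four adjacent values of the $h$-function, and Liu's difference formulas (2.2)--(2.5), anchored by the boundary values $h(s_{1},+\infty)=h_{1}(s_{1}-lk/2)$, recover $h$ from the normalized Alexander polynomials. The only point worth adding is that the linking number $lk$ entering your recursion is itself recoverable from the data via Equation (2.1) (and is $0$ when $\Delta_{L}=0$), so the corollary's list of inputs really does suffice.
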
 

\section{Computation of $\widehat{HFL}$ for $L$-space links with two components }
\subsection{The spectral sequence corresponding to $\widehat{HFL}$}

In Section 2, we proved that for any $L$-space link $L=L_{1}\cup L_{2}$ with  $(s_{1}, s_{2})\in \bH$, $HFL^{-}(s_{1}, s_{2})$ is determined by the $h$-function. Now we are going to prove Theorem 1.2 that the Heegaard Floer link homology $\widehat{HFL}(s_{1}, s_{2})$ is also determined by the $h$-function.

Let $\mathfrak{C}(s_{1}, s_{2})=CFL^{-}(s_{1}+1, s_{2}+1)\oplus CFL^{-}(s_{1}+1, s_{2})\oplus CFL^{-}(s_{1}, s_{2}+1)\oplus CFL^{-}(s_{1}, s_{2})$.  For any $(s_{1}, s_{2})\in \bH$, we have two operators $U_{1}: CFL^{-}(s_{1}, s_{2})\rightarrow CFL^{-}(s_{1}-1, s_{2})$ and $U_{2}: CFL^{-}(s_{1}, s_{2})\rightarrow CFL^{-}(s_{1}, s_{2}-1)$. The action of $U_{1}$ (or $U_{2}$) is defined by $h$-function (See Lemma \ref{U-action}).  Let $D=d+d_{1}$ where $d$ is the differential in chain complex $CFL^{-}(s_{1}, s_{2})$ with any $(s_{1}, s_{2})\in \bH$ and $d_{1}=U_{1}-U_{2}$.  Then we get the  ``iterated cone" complex $(\mathfrak{C}(s_{1}, s_{2}), d+d_{1})$ in the following form:

\[ \left[
\begin{tikzcd}
CFL^{-}(s_{1}, s_{2}+1) \arrow[swap]{d}{U_{2}}  & CFL^{-}(s_{1}+1, s_{2}+1) \arrow{l}{U_{1}} \arrow[swap]{d}{U_{2}}  \\%
CFL^{-}(s_{1}, s_{2})  & CFL^{-1}(s_{1}+1, s_{2}) \arrow{l}{U_{1}}
 \end{tikzcd}
\right] \]

\begin{lemma}
Suppose that $L=L_{1}\cup L_{2}$ is an $L$-space link with two components $L_{1}$ and $L_{2}$. Let $\widehat{CFL}(s_{1}, s_{2})$ denote the chain complex of hat-version of Heegaard Floer link homology of $L$ with $(s_{1}, s_{2})\in \bH$. Then $\widehat{CFL}(s_{1}, s_{2})$ is quasi-isomorphic to the ``iterated cone" complex $(\mathfrak{C}(s_{1}, s_{2}), d+d_{1})$. 
\end{lemma}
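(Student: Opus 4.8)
The plan is to establish the quasi-isomorphism by exhibiting $(\mathfrak{C}(s_{1},s_{2}), d+d_1)$ as the mapping cone of an explicit chain map built from the $U$-actions, and then to identify this cone with the standard ``double complex'' description of $\widehat{CFL}$. Recall that at the chain level, the hat-version is obtained from the minus-version by setting the $U$-variables to zero; more precisely, for a two-component link $\widehat{CFL}(s_{1},s_{2})$ is computed by the total complex associated to a square in which one applies $U_1$ and $U_2$ and takes the quotient (or equivalently the cone) that kills the $U$-actions in each coordinate direction. So the first step I would carry out is to write down the iterated-cone complex $(\mathfrak{C}, D)$ precisely, with $D = d + d_1$ and $d_1 = U_1 - U_2$, and verify that $D^2 = 0$. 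This reduces to the two facts that $d$ is a differential on each summand $CFL^-(s_1,s_2)$, that $U_1$ and $U_2$ are chain maps (commute with $d$), and that the square of maps commutes, i.e. the two ways of going from $CFL^-(s_1+1,s_2+1)$ to $CFL^-(s_1,s_2)$ (namely $U_1 U_2$ and $U_2 U_1$) agree. This last commutativity is exactly the statement, recalled in Section 2, that $U_1$ and $U_2$ are homotopic on each $A^-(s_1,s_2)$ (cf.\ \cite{os2}); I would need to check that the relevant homotopies can be arranged coherently so that the square strictly anticommutes (over $\F = \F_2$, commuting and anticommuting coincide), which gives $D^2=0$.

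**The main structural step** is to match $(\mathfrak{C}, D)$ against the known chain-level model for $\widehat{CFL}$. The cleanest route is to recall that $\widehat{HFL}(L,s_1,s_2)$ arises from the full link Floer complex $CFL^\infty$ (or $CFL^-$) by the standard operation that, for each of the two components, forms the cone of the corresponding $U_i$-action and then takes homology; concretely the hat complex is the total complex of the bicomplex whose vertices are copies of the full complex and whose horizontal and vertical arrows are $U_1$ and $U_2$. I would invoke Lemma~2.5 (the iterated-cone quasi-isomorphism $CFL^-(s_1,s_2) \simeq K(s_1,s_2)$) to replace each of the four summands $CFL^-$ by its cube model, and then show that the resulting object agrees, up to quasi-isomorphism, with the standard definition of $\widehat{CFL}$ found in \cite{MO}. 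The identification of the arrows is the content: the maps $U_1, U_2$ appearing in the square of $\mathfrak{C}$ must be shown to induce, on homology, precisely the maps $U_i: A^-(\mathbf{s}+\mathbf{e}_i)\to A^-(\mathbf{s})$ recorded in the Remark after Lemma~\ref{2.4}, namely $U^{1-\delta(i)}$ on the unique generators. This is what ties the combinatorial model back to the $h$-function and makes the subsequent spectral-sequence computation possible.

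**The hard part** will be the bookkeeping of gradings and the verification that the quasi-isomorphism respects both the homological and the Alexander/cube gradings, rather than any single deep step. In particular I expect the genuine obstacle to be proving that the four-fold direct sum with the $U_1 - U_2$ differential really recovers $\widehat{CFL}(s_1,s_2)$ on the nose (up to quasi-isomorphism) and not merely a complex with the same homology after forgetting gradings; this requires carefully tracking the Maslov and Alexander gradings through Manolescu--Ozsv\'ath's large-surgery identification and through the homotopy $U_1 \simeq U_2$. I would handle this by first proving the statement for the associated graded object, where each $CFL^-$ is replaced by its $\F[[U]]$-homology $\F[[U]][-2h(s_1,s_2)]$, so that the square becomes an explicit $\F[[U]]$-module map determined entirely by the local behaviour of $h$ (the six cases of Figure~1 via Corollary~\ref{coro1}); and then arguing that the spectral sequence of $(\mathfrak{C}, D)$ degenerates in the expected range so that the associated-graded computation lifts to the genuine complex. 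Once the quasi-isomorphism is in place at the level of the $E^1$-page, the remaining claim—that $E^1$ is the stated fourfold sum of $HFL^-$ groups with $d_1$ induced by $U_1, U_2$—follows immediately from Theorem~2.6 applied to each summand.
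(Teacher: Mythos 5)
Your first two paragraphs contain the paper's actual argument: $\widehat{CFL}(s_{1}, s_{2})$ is the iterated quotient of $CFL^{-}(s_{1}, s_{2})$ by the images of $U_{1}$ and $U_{2}$, each quotient is quasi-isomorphic to the corresponding mapping cone, and iterating the cone construction in the two directions produces exactly the total complex of the square, i.e. $(\mathfrak{C}(s_{1}, s_{2}), d+d_{1})$. One small correction there: the commutativity you propose to arrange ``up to coherent homotopy'' is in fact strict. The complex $CFL^{-}$ is a module over $\F[U_{1}, U_{2}]$ with $\F[U_{1}, U_{2}]$-linear differential, so $U_{1}U_{2}=U_{2}U_{1}$ on the nose and $D^{2}=0$ is immediate; the homotopy $U_{1}\simeq U_{2}$ on each $A^{-}(\mathbf{s})$ is a separate statement that plays no role in this lemma.

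The genuine problem is the step you single out as the hard part. Proving the statement first for the associated graded object (replacing each $CFL^{-}$ by $\F[[U]][-2h]$) and then arguing that the spectral sequence of $(\mathfrak{C}, D)$ degenerates cannot establish the lemma: agreement of homology, even page by page, does not yield a quasi-isomorphism, and the degeneration of that spectral sequence is precisely what Theorem \ref{thm3} later extracts \emph{from} this lemma, so the argument would be circular. The step that actually closes the proof is elementary and chain-level: since $CFL^{-}$ is a free $\F[U_{1}, U_{2}]$-module, $U_{i}$ acts injectively, so the natural projection from the cone of $U_{i}$ onto the quotient by the image of $U_{i}$ is a quasi-isomorphism; apply this once in the $U_{1}$-direction and once more (to the resulting cones) in the $U_{2}$-direction. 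No spectral sequence, and no appeal to the cube model of $CFL^{-}$ or to the $h$-function, is needed here --- the identification of the induced maps on homology with $U^{1-\delta(i)}$ (Remark 2.5) is the content of Lemma \ref{U-action}, not of this lemma.
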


\begin{proof}
For this $L$-space link $L=L_{1}\cup L_{2}$ with two components, we can write $\widehat{CFL}(s_{1}, s_{2})$ as:
$$\dfrac{CFL^{-}(s_{1}, s_{2})/ U_{1}(CFL^{-}(s_{1}+1, s_{2}))}{U_{2}(CFL^{-}(s_{1}, s_{2}+1)/U_{1}(CFL^{-}(s_{1}+1, s_{2}+1)))}$$
The quotient $CFL^{-}(s_{1}, s_{2})/ U_{1}(CFL^{-}(s_{1}+1, s_{2})$ can be realized as the  cone of the map $U_{1}: CFL^{-}(s_{1}+1, s_{2})\rightarrow CFL^{-}(s_{1}, s_{2})$ and similarly the quotient $CFL^{-}(s_{1}, s_{2}+1)/ U_{1}(CFL^{-}(s_{1}+1, s_{2}+1))$ can be realized as the cone of the map $U_{1}:CFL^{-}(s_{1}+1, s_{2}+1)\rightarrow CFL^{-}(s_{1}, s_{2}+1)$. Thus $\widehat{CFL}(s_{1}, s_{2})$ can be realized as a cone of the natural map induced by $U_{2}$ between these two  cones. 
\end{proof}

\begin{theorem}
\label{thm3}
Let $L=L_{1}\cup L_{2}$ be an $L$-space link with two components. For any $(s_{1}, s_{2})\in \bH$, there exists a spectral sequence with the following properties:

(a) Its $E^{2}$-page is isomorphic (as graded $\F$-module) to $H_{\ast}(H_{\ast}(\mathfrak{C}(s_{1}, s_{2}), d), d_{1})$.

(b) Its $E^{\infty}$-page is isomorphic (as graded $\F$-module) to $\widehat{HFL}(s_{1}, s_{2})$.

(c) The spectral sequence collapses at $E^{3}$. 
\end{theorem}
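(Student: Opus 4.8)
The plan is to view $(\mathfrak{C}(s_{1}, s_{2}), D)$, $D = d + d_{1}$, as a bounded filtered complex and to extract all three assertions from the spectral sequence of that complex, using the preceding lemma to identify the object it converges to. First I would filter $\mathfrak{C}(s_{1}, s_{2})$ by the \emph{cube grading} of the outer square built from $U_{1}$ and $U_{2}$: place $CFL^{-}(s_{1}+1, s_{2}+1)$ at the top level, the two mixed summands $CFL^{-}(s_{1}+1, s_{2})$ and $CFL^{-}(s_{1}, s_{2}+1)$ at the middle level, and $CFL^{-}(s_{1}, s_{2})$ at the bottom level, so that the grading takes exactly three consecutive values. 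The internal differential $d$ (the full $CFL^{-}$-differential inside each vertex) preserves this grading, while $d_{1} = U_{1} - U_{2}$ lowers it by exactly one, since each edge map $U_{i}$ drops the cube level by one and $\mathfrak{C}$ carries no diagonal term. Combining the Maslov grading with the cube grading into a single total grading — here one must weight the cube grading correctly, because the edge maps $U_{i}$ have Maslov degree $-2$, unlike the inclusion maps in the $HFL^{-}$ spectral sequence of Section 2 which have Maslov degree $0$ — yields a filtered chain complex with $D$ homogeneous of degree $-1$.

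I would then run the associated spectral sequence. The filtration is bounded (three levels), so the sequence converges to $H_{\ast}(\mathfrak{C}(s_{1}, s_{2}), D)$. Its associated-graded differential $d_{0}$ is the filtration-preserving part of $D$, namely $d$, so $E^{1} = H_{\ast}(\mathfrak{C}(s_{1}, s_{2}), d)$; since $d$ is the $CFL^{-}$-differential and $H_{\ast}(CFL^{-}(\cdot, \cdot), d) = HFL^{-}(\cdot, \cdot)$, this $E^{1}$-page is the direct sum of the four groups $HFL^{-}(s_{1}+1, s_{2}+1)$, $HFL^{-}(s_{1}+1, s_{2})$, $HFL^{-}(s_{1}, s_{2}+1)$, $HFL^{-}(s_{1}, s_{2})$. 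The next differential is the part of $D$ dropping the filtration by exactly one, namely $(U_{1}-U_{2})_{\ast} = (d_{1})_{\ast}$, whose action on these $HFL^{-}$ summands is the one described in Lemma \ref{U-action}. Hence $E^{2} = H_{\ast}(E^{1}, (d_{1})_{\ast}) = H_{\ast}(H_{\ast}(\mathfrak{C}(s_{1}, s_{2}), d), d_{1})$, which is assertion (a).

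For (b), the preceding lemma supplies a quasi-isomorphism $\widehat{CFL}(s_{1}, s_{2}) \simeq (\mathfrak{C}(s_{1}, s_{2}), d + d_{1})$, so the homology of the complex we are filtering is $\widehat{HFL}(s_{1}, s_{2})$. Because everything is defined over the field $\F$, a filtered $\F$-module is isomorphic to its associated graded, so $E^{\infty} \cong \widehat{HFL}(s_{1}, s_{2})$ as graded $\F$-modules. For (c), I would argue purely from the length of the filtration: the $r$-th differential of the spectral sequence shifts the cube grading by $r$, but the cube grading runs over only three consecutive values, so for $r \geq 3$ the source and target of $d_{r}$ cannot both be nonzero. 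Thus $d_{r} = 0$ for all $r \geq 3$ and $E^{3} = E^{\infty}$. The page-two differential $d_{2}$, which records the diagonal zig-zag around the square, is in general nonzero, which is precisely why stabilization occurs at $E^{3}$ and not already at $E^{2}$.

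I expect the genuine difficulty to be bookkeeping rather than structure. The homological algebra of a bounded filtered complex is routine, and the one substantial geometric input — the quasi-isomorphism between $\widehat{CFL}$ and the iterated cone — is exactly the content of the preceding lemma. The care is needed in fixing the total grading so that $D$ is homogeneous and then checking that every identification above, and in particular $E^{\infty} \cong \widehat{HFL}$, is an isomorphism of graded $\F$-modules once the internal grading shifts coming from the iterated-cone presentation of each $CFL^{-}$ vertex are accounted for.
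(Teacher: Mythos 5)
Your proposal is correct and follows essentially the same route as the paper: filter the iterated cone by the cube grading (three levels), identify $E^{1}$ with the sum of four $HFL^{-}$ groups, $E^{2}$ with $H_{\ast}(H_{\ast}(\mathfrak{C},d),d_{1})$, use the quasi-isomorphism of Lemma 3.1 to identify $E^{\infty}$ with $\widehat{HFL}(s_{1},s_{2})$, and kill $d_{r}$ for $r\geq 3$ because the cube grading only spans three values. Your explicit remark that the total grading must weight the cube direction to account for $U_{i}$ having Maslov degree $-2$ (rather than $0$ as for the inclusions in Section 2) is a point the paper also records, via the statement that $d_{k}$ drops $\nu$ by $k+1$.
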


\begin{proof}
For the ``iterated cone" complex $\mathfrak{C}(s_{1}, s_{2})$ we mentioned above, it is doubly graded. One is the homological grading $\nu$ in the chain complex $CFL^{-}(s_{1}, s_{2})$ with $(s_{1}, s_{2})\in \bH$. We define \emph{cube grading} $|C|$ in the cube of the `iterated cone" complex $\mathfrak{C}(s_{1}, s_{2})$. Fix $(s_{1}, s_{2})\in \bH$. The cube grading is defined as $(s_{1}+s_{2})-(v_{1}+v_{2})$ where $(v_{1}, v_{2})\in \bH$. It is equivalent to saying the the cube grading of the lower-left corner of the cube is $0$ and $U_{1}$ (or $U_{2}$) increases the cube grading by 1. 

For this doubly-graded complex $\mathfrak{C}(s_{1}, s_{2})$ with two (anti)commuting differentials $d$ and $d_{1}$, there exists a spectral sequence whose $E^{1}$-page is $H_{\ast}(\mathfrak{C}(s_{1}, s_{2}), d)$  and converges to $H_{\ast}(\mathfrak{C}(s_{1}, s_{2}), d+d_{1})$. By Lemma 3.1, its $E^{\infty}$-page is  isomorphic to $\widehat{HFL}(s_{1}, s_{2})$. Its $E^{1}$-page is $H_{\ast}(\mathfrak{C}(s_{1}, s_{2}), d)$ which can be written as $HFL^{-}(s_{1}+1, s_{2}+1)\oplus HFL^{-}(s_{1}+1, s_{2})\oplus HFL^{-}(s_{1}, s_{2}+1)\oplus HFL^{-}(s_{1}, s_{2})$. Its $E^{2}$-page is  $H_{\ast}(H_{\ast}(\mathfrak{C}(s_{1}, s_{2}), d), d_{1})$. The differential $d_{0}=d$ in the spectral sequence preserves the cube degree $|C|$ and decreases the homological degree $\nu$ by 1. The differential $d_{1}$ in $E^{1}$-page increases the cube degree by $1$ and decreases the homological degree $\nu$ by 2. For any nonnegative integer $k$, the differential $d_{k}$ increases the cube degree by $k$, decreases the homological degree $\nu$ by $k+1$. The total homological degree is $\nu+|C|$. By grading reason, the cube grading is less than or equal to $2$. Thus for integer $k>2$, $d_{k}=0$ and this spectral sequence collapses at $E^{3}$. 
\end{proof}

By Theorem 3.2, $\widehat{HFL}(s_{1}, s_{2})\cong E^{3}$. Then we can compute $\widehat{HFL}(s_{1}, s_{2})$ by computing $E^{3}$-page of the above spectral sequence. The following lemma describes the action of $U_{1}$ (or $U_{2}$) on the $E^{1}$-page of the spectral sequence in Theorem \ref{thm3}. This gives the action of $d_{1}$ on $E^{1}$-page. 

\begin{lemma}
\label{U-action}
Consider the map $U_{1}: HFL^{-}(s_{1}+1, s_{2}+1)\rightarrow HFL^{-}(s_{1}, s_{2}+1)$. Let $\alpha$  be a generator of $HFL^{-}(s_{1}+1, s_{2}+1)$ with homological grading $x$. If there exists a generator $\beta$ in $HFL^{-}(s_{1}, s_{2}+1)$ with homological grading $x-2$, then $U_{1}(\alpha)=\beta$. 
\end{lemma}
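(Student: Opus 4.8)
The plan is to make the inter-grading operator $U_1$ completely explicit on the iterated cone complexes $K(s_1,s_2)$ and then read everything off the $h$-function. Since $U_1$ lowers the first Alexander coordinate by one, it carries the cube $K(s_1+1,s_2+1)$, whose four corners sit over $(s_1+1,s_2+1)$, $(s_1,s_2+1)$, $(s_1,s_2)$, $(s_1+1,s_2)$, corner-for-corner onto the cube $K(s_1,s_2+1)$, whose corners sit over $(s_1,s_2+1)$, $(s_1-1,s_2+1)$, $(s_1-1,s_2)$, $(s_1,s_2)$. On the $d$-homology of each corner, which is a copy of $\F[[U]]$ by Proposition 1.1, the induced map is multiplication by $U^{1-\delta(1)}$, where $\delta(1)\in\{0,1\}$ is the exponent of Lemma \ref{2.4}, equal by Corollary \ref{coro1} to the local jump of $h$ in the $s_1$-direction. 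These four maps commute with the cube differential $i$, so they descend to the operator $U_1$ on $HFL^{-}=H_{*}(H_{*}(A^{-},d),i)$.

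First I would invoke the last corollary of Section 2: each homological degree of $HFL^{-}$ is at most one-dimensional over $\F$, with the even class spanned by $a$ and the odd class by $b-d$. As $U_1$ has homological degree $-2$ it preserves parity and sends $\alpha$ in degree $x$ into the degree-$(x-2)$ part of the target, which is either $0$ or $\langle\beta\rangle$; so the whole content of the lemma is to exclude $U_1(\alpha)=0$. The decisive structural fact, visible in Cases 4--6 of the $HFL^{-}$ computation, is that $U$ kills the surviving classes: $U\cdot a=i(b)=0$ and $U\cdot(b-d)=i(c)=0$ in $HFL^{-}$.

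Next I would run the degree bookkeeping on the two generator types. For $\alpha=a$, the operator sends $a$ to $U^{1-\delta(1)}$ times the upper-right class of the target, and a direct computation shows this image always lands in degree $x-2$; it equals the surviving generator $\beta$ when $\delta(1)=1$ and equals $U\cdot(\text{target }a)=0$ when $\delta(1)=0$. But $\delta(1)=1$ means $h(s_1,s_2+1)=h(s_1+1,s_2+1)+1$, which is exactly the condition that an even target generator occupies degree $x-2$; when $\delta(1)=0$ the even target generator sits in degree $x$ instead, so no $\beta$ exists and the hypothesis is not met. Thus, under the hypothesis, $U_1(a)=\beta$. For $\alpha=b-d$ the same analysis on the upper-left and lower-right corners gives $U_1(b-d)=U^{1-\delta}b_{\mathrm{tgt}}-U^{1-\delta'}d_{\mathrm{tgt}}$ with two local jumps $\delta,\delta'$; existence of the odd target generator in degree $x-2$ forces $\delta=\delta'=1$, whence $U_1(b-d)=b_{\mathrm{tgt}}-d_{\mathrm{tgt}}=\beta$, while any other configuration either makes $U_1(b-d)=U(b_{\mathrm{tgt}}-d_{\mathrm{tgt}})=0$ or leaves no odd generator in degree $x-2$.

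The main obstacle is the step that is easy to state but needs care: showing that the four corner maps $U^{1-\delta(1)}$ genuinely assemble into a chain map commuting with the cube differential $i$, and therefore induce the asserted operator on the mapping-cone homology, together with coordinating the local behaviour of $h$ across the two overlapping cubes so that the grading computation is unambiguous. Once this naturality and the identification of the $U$-action with $i$ on surviving classes are in place, the finitely many configurations of Figure 1 applied to the two cubes give $U_1(\alpha)=\beta$ in every case admitting a degree-$(x-2)$ generator.
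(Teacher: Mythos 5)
Your proposal is correct and follows essentially the same route as the paper: describe $U_{1}$ corner-by-corner on the two overlapping iterated cones as multiplication by $U^{1-\delta}$ with $\delta$ the local jump of the $h$-function, use the fact that $HFL^{-}$ is spanned by the even class $a$ and the odd class $b-d$, and let the degree bookkeeping force the relevant jumps to equal $1$ so that $U_{1}(\alpha)=\beta$. The extra observations you flag (that the corner maps commute with the cube differential $i$, and that $U$ annihilates the surviving classes when $\delta=0$) are correct and harmless elaborations; the first is automatic since $U_{1}$ is a genuine chain map.
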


\begin{proof}
Let $a_{1}, b_{1}, c_{1}, d_{1}$ denote the unique generators of $H_{\ast}(A^{-}(s_{1}, s_{2}+1))$, $H_{\ast}(A^{-}(s_{1}-1, s_{2}+1))$, $H_{\ast}(A^{-}(s_{1}-1, s_{2}))$ and $H_{\ast}(A^{-}(s_{1}, s_{2}))$ respectively in Figure 2.  Let $a, b,c, d$ denote the unique generators of $H_{\ast}(A^{-}(s_{1}+1, s_{2}+1))$, $H_{\ast}(A^{-}(s_{1}, s_{2}+1))$, $H_{\ast}(A^{-}(s_{1}, s_{2}))$ and  $H_{\ast}(A^{-}(s_{1}+1, s_{2}))$ respectively in Figure 2. Here $a_{1}$ and $b$ have different cube gradings as the generators of $H_{\ast}(A^{-}(s_{1}, s_{2}+1))$ and $d_{1}, c$ have different cube gradings as the generators of $H_{\ast}(A^{-}(s_{1}, s_{2}))$. By the computation of $HFL^{-}$ in Section 2.1, $h(s_{1}, s_{2}+1)=h(s_{1}+1, s_{2})$ once $HFL^{-}(s_{1}+1, s_{2}+1)$ is nonempty. By the same reason, $h(s_{1}-1, s_{2}+1)=h(s_{1}, s_{2})$ since $HFL^{-}(s_{1}, s_{2})$ is also nonempty. Assume the generator $\alpha=b-d$ with total homological grading $-2h(s_{1}, s_{2}+1)+1$. The generator $a_{1}$ has total homological grading $-2h(s_{1}, s_{2}+1)$ and $b_{1}-d_{1}$ has total homological grading $-2h(s_{1}-1, s_{2}+1)+1$. By the assumption of this lemma, the total homological grading of $\beta$ is $-2h(s_{1}, s_{2}+1)-1$, so $\beta$ can only be $b_{1}-d_{1}$ and $h(s_{1}-1, s_{2}+1)=h(s_{1}, s_{2}+1)+1$. Now consider the map $U_{1}: H_{\ast}(A^{-}(s_{1}, s_{2}+1))\rightarrow H_{\ast}(A^{-}(s_{1}-1, s_{2}+1))$ where $H_{\ast}(A^{-}(s_{1}, s_{2}+1))=\langle b \rangle$ and $H_{\ast}(A^{-}(s_{1}-1, s_{2}+1))=\langle b_{1} \rangle$. Since $U_{1}$ has homological degree $-2$, $U_{1}(d)=d_{1}$ by Lemma \ref{2.4} and Remark 2.5. Similarly, $U_{1}(c)=c_{1}$. Then $U_{1}(\alpha)=U_{1}(b-d)=b_{1}-d_{1}=\beta$. If $\alpha=a$, then $\beta=a_{1}$ and we use the similar argument above to prove $U_{1}(\alpha)=\beta$.

\begin{figure}[H]
\begin{picture}(100,70)(150,5)
\put(1,10){\framebox(190,50)}
\put(134,22){\makebox(0,0){$A^{-}(s_{1}, s_{2})$}}
\put(165,22){\makebox(0,0){$[d_{1}]$}}
\put(140,50){\makebox(0,0){$A^{-}(s_{1}, s_{2}+1)$}}
\put(181,50){\makebox(0,0){$[a_{1}]$}}
\put(38,22){\makebox(0,0){$A^{-}(s_{1}-1, s_{2})$}}
\put(78,22){\makebox(0,0){$[c_{1}]$}}
\put(43,50){\makebox(0,0){$A^{-}(s_{1}-1, s_{2}+1)$}}
\put(93,50){\makebox(0,0){$[b_{1}]$}}
\put(85, 1){\makebox(0,0)[I]{$HFL^{-}(s_{1}, s_{2}+1)$}}

\put(210,10){\framebox(190,50)}
\put(347,22){\makebox(0,0){$A^{-}(s_{1}+1, s_{2})$}}
\put(385,22){\makebox(0,0){$[d]$}}
\put(342,50){\makebox(0,0){$A^{-}(s_{1}+1, s_{2}+1)$}}
\put(390,50){\makebox(0,0){$[a]$}}
\put(248, 22){\makebox(0,0){$A^{-}(s_{1}, s_{2})$}}
\put(277, 22){\makebox(0,0){$[c]$}}
\put(244, 50){\makebox(0,0){$A^{-}(s_{1}, s_{2}+1)$}}
\put(282,50){\makebox(0,0){$[b]$}}
\put(300, 1){\makebox(0,0)[I]{$HFL^{-}(s_{1}+1, s_{2}+1)$}}

\end{picture}
\caption{}
\end{figure}

\end{proof}

\begin{remark}
The map $U_{2}: HFL^{-}(s_{1}+1, s_{2}+1) \rightarrow HFL^{-}(s_{1}+1, s_{2} )$ can be described similarly to Lemma \ref{U-action}.
\end{remark}

For the action of $d_{2}$ on $E^{2}$-page, if it is nontrivial, we use the symmetry of Heegaard Floer link homology. 

\begin{lemma}\cite[Equation 5]{os3}
\label{symmetry}
For an oriented $L$-space link $L=L_{1}\cup L_{2}$ with two components and  $\mathbf{s}=(s_{1}, s_{2})\in \bH$, there exists a relatively graded isomorphism 
$$\widehat{HFL}(L, \mathbf{s})\cong \widehat{HFL}(L, - \mathbf{s}) $$
\end{lemma}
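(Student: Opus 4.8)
The plan is to deduce the isomorphism from the conjugation symmetry of link Floer homology, which at this stage is a genuinely independent input. I want to flag at the outset that one should \emph{not} try to derive the symmetry from a symmetry of the $h$-function here: the determination of $\widehat{HFL}$ by the $h$-function is precisely what this paper is building toward, and invoking it to prove the present lemma would be circular. So instead, fix a generic admissible multi-pointed Heegaard diagram $\mathcal{H}=(\Sigma,\alpha,\beta,\mathbf{w},\mathbf{z})$ compatible with the oriented link $L=L_{1}\cup L_{2}$, so that $\widehat{HFL}(L,\mathbf{s})$ is computed from the associated filtered complex. I would then form the conjugate diagram $\mathcal{H}^{*}=(-\Sigma,\beta,\alpha,\mathbf{w},\mathbf{z})$, obtained by reversing the orientation of the Heegaard surface and interchanging the roles of the $\alpha$- and $\beta$-curves. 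The first step is to verify that $\mathcal{H}^{*}$ is again an admissible multi-pointed Heegaard diagram representing the same oriented link $L$ with the same labeling of basepoints.

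Next I would establish a chain-level isomorphism. There is a tautological identification of intersection points $\mathbf{x}\in \mathbb{T}_{\alpha}\cap \mathbb{T}_{\beta}$ in $\mathcal{H}$ with intersection points in $\mathcal{H}^{*}$, and under the orientation reversal of $\Sigma$ a Whitney disk $\phi\in\pi_{2}(\mathbf{x},\mathbf{y})$ holomorphic for an almost complex structure $J$ corresponds to a disk $\phi^{*}\in\pi_{2}(\mathbf{y},\mathbf{x})$ holomorphic for the conjugate structure $\bar{J}$, with the same Maslov index and the same multiplicities $n_{w_{i}},n_{z_{i}}$ at the basepoints. Hence the two differentials match and the underlying complexes are isomorphic. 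The crucial point is the behavior of the Alexander multi-grading: the relative grading in the $i$-th factor is controlled by $A_{i}(\mathbf{x})-A_{i}(\mathbf{y})=n_{z_{i}}(\phi)-n_{w_{i}}(\phi)$, and interchanging $\alpha$ and $\beta$ reverses the sign of the domain of a connecting disk, so the relative Alexander grading is negated. Therefore the summand of the complex sitting in multi-grading $\mathbf{s}$ for $\mathcal{H}$ is carried isomorphically to the summand in multi-grading $-\mathbf{s}$ for $\mathcal{H}^{*}$.

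Finally, invariance of link Floer homology under the choice of admissible Heegaard diagram identifies the homology computed from $\mathcal{H}^{*}$ with $\widehat{HFL}(L)$, which yields the relatively graded isomorphism $\widehat{HFL}(L,\mathbf{s})\cong \widehat{HFL}(L,-\mathbf{s})$. The hard part will be the grading bookkeeping: one must check that the conjugate diagram recovers the same oriented link with the correct basepoint assignment, so that the Alexander grading genuinely negates rather than undergoing a more complicated affine transformation, and one must take care that only a \emph{relative} grading statement is asserted. The absolute homological grading in fact shifts by an explicit amount depending on the linking and Euler-characteristic data of the components; but since only the relatively graded isomorphism is needed, these absolute shifts may be suppressed, which is exactly what keeps the argument manageable.
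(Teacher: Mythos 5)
The paper does not prove this lemma at all: it is imported verbatim from \cite[Equation 5]{os3}, so there is no internal argument to compare yours against. Your proposal reconstructs the standard conjugation-symmetry proof from the cited literature, and its two structural decisions are both correct: the symmetry has to be established at the level of Heegaard diagrams rather than deduced from the $h$-function (which, as you note, would be circular given how the lemma is used later in Section 3), and only a relatively graded isomorphism should be claimed, since the absolute Maslov grading genuinely shifts.

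The one point you defer to ``grading bookkeeping'' is exactly where the convention bites, and as written your conjugate diagram is the wrong one. The diagram $(-\Sigma,\beta,\alpha,\mathbf{w},\mathbf{z})$ represents the orientation-reversed link $-L$, not $L$: exchanging the two handlebodies while keeping the basepoint labels reverses the sign with which each component meets the Heegaard surface. The standard remedy, and the one used in Ozsv\'ath--Szab\'o's own argument, is to pass to $(-\Sigma,\beta,\alpha,\mathbf{z},\mathbf{w})$, interchanging the $w$- and $z$-basepoints as well. With that diagram the tautological correspondence carries $\phi\in\pi_{2}(\mathbf{x},\mathbf{y})$ to a class in $\pi_{2}(\mathbf{x}',\mathbf{y}')$ (same direction, not reversed) with the multiplicities at $w_{i}$ and $z_{i}$ exchanged; the differentials are then literally intertwined rather than transposed, and the relative Alexander multi-grading is still negated because $A_{i}(\mathbf{x})-A_{i}(\mathbf{y})=n_{z_{i}}(\phi)-n_{w_{i}}(\phi)$ changes sign under the basepoint swap. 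Your unswapped version actually proves $\widehat{HFL}(L,\mathbf{s})\cong\widehat{HFL}(-L,-\mathbf{s})$ (over $\F$, via the duality between a complex and its transpose), which is a true but different statement; recovering the lemma from it would require a further comparison of $\widehat{HFL}(L)$ with $\widehat{HFL}(-L)$. With the corrected conjugate diagram the argument closes exactly as you intend.
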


\begin{remark}
In particular, the $h$-functions satisfy that $h(-\mathbf{s})=h(\mathbf{s})+|\mathbf{s}|$ \cite[Lemma 5.5]{Liu} where $|\mathbf{s}|=s_{1}+s_{2}.$ 
\end{remark}

\subsection{Proof of the main Theorem }
In this subsection, we are going to give the proof of Theorem \ref{thm 2}  and show that $4$ is an upper bound for the rank of link Floer homology $\widehat{HFL}(s_{1}, s_{2})$ for any $2$-component $L$-space link and $(s_{1}, s_{2})\in \bH$. Example 3.8 gives a $2$-component $L$-space link where the rank of $\widehat{HFL}(s_{1}, s_{2})$ ranges from $0$ to $4$. \\

\noindent
{\bf Proof of Theorem \ref{thm 2}:}
Assume that $h=h(s_{1}+1, s_{2}+1)$. If $d_{2}=0$, then the spectral sequence in Theorem \ref{thm3} collapses at $E^{2}$-page, then we can use the computation of $HFL^{-}$ in Section 2.1 and Lemma \ref{U-action}  to compute $\widehat{HFL}(s_{1}, s_{2})$. For example, suppose that  the  $h$-function corresponding to  $\widehat{HFL}(s_{1}, s_{2})$ is the following:
\begin{figure}[ht]
\begin{picture}(110,65)(30,1)
\put(1,10){\framebox(110, 60)}
\put(90, 20){\makebox(0,0){$h+1$}}
\put(55, 20){\makebox(0,0){$h+1$}}
\put(20, 20){\makebox(0,0){$h+2$}}
\put(90, 40){\makebox(0,0){$h$}}
\put(55, 40){\makebox(0,0){$h+1$}}
\put(20, 40){\makebox(0,0){$h+1$}}
\put(90, 60){\makebox(0,0){$h$}}
\put(55, 60){\makebox(0,0){$h$}}
\put(20, 60){\makebox(0,0){$h+1$}}
\end{picture}
\end{figure}

We obtain that  the $E^{2}$-page of the spectral sequence is:
\[\widehat{HFL}(s_{1}, s_{2}) :\simeq \left[
\begin{tikzcd}
\mathbb{F}[-2h] \arrow[swap]{d}{U_{2}}  & \mathbb{F}[-2h+1]  \arrow{l}{U_{1}} \arrow[swap]{d}{U_{2}}  \\%
\mathbb{F}[-2h-1]  & \mathbb{F}[-2h] \arrow{l}{U_{1}}
 \end{tikzcd}
\right] \]
Recall that $U_{1}$ and $U_{2}$ both have homological grading $-2$. So $U_{1}=U_{2}=0$. By Theorem \ref{thm3}, we know that $d_{2}$ increases the cube grading by $2$ and decreases the homological grading $\nu$ by $3$, so $d_{2}=0$ by grading reason. Thus $\widehat{HFL}(s_{1}, s_{2})\cong \F[-2h-1]\oplus \F[-2h-1] \oplus \F[-2h-1] \oplus \F[-2h-1]$. Here the cube grading for the generator in $\F[-2h-1]$ is $0$. We can use this method to compute $\widehat{HFL}$ in all the cases where $d_{2}=0$. Now it suffices to consider the cases where $d_{2}$ may be nontrivial.

By grading reason, in order to have nontrivial $d_{2}$, $HFL^{-}(s_{1}+1, s_{2}+1)$ and $HFL^{-}(s_{1}, s_{2})$ are both nonzero and contain a generator in each group such that their homological degree difference is $3$. For nonzero $HFL^{-}(s_{1}+1, s_{2}+1)$, we have the following three possibilities for the corresponding $h$-function :

\begin{figure}[H]
\begin{picture}(100,50)(120,1)
\put(10,10){\framebox(80,50)}
\put(65,22){\makebox(0,0){$h$}}
\put(65,50){\makebox(0,0){$h$}}
\put(30,22){\makebox(0,0){$h+1$}}
\put(30,50){\makebox(0,0){$h$}}
\put(45, 1){\makebox(0,0)[I]{Case 1}}

\put(110,10){\framebox(80,50)}
\put(165,22){\makebox(0,0){$h+1$}}
\put(165,50){\makebox(0,0){$h$}}
\put(130, 22){\makebox(0,0){$h+1$}}
\put(130, 50){\makebox(0,0){$h+1$}}
\put(145, 1){\makebox(0,0)[I]{Case 2}}

\put(210,10){\framebox(80,50)}
\put(270,22){\makebox(0,0){$h+1$}}
\put(270,50){\makebox(0,0){$h$}}
\put(235,22){\makebox(0,0){$h+2$}}
\put(235,50){\makebox(0,0){$h+1$}}
\put(245, 1){\makebox(0,0)[I]{Case 3}}
\end{picture}
\end{figure}

In Case 1, $HFL^{-}(s_{1}+1, s_{2}+1)=\F[-2h+1]$. In order to have nontrivial $d_{2}$,  $HFL^{-}(s_{1}, s_{2})$ must contain one generator with homological grading $-2h-2$ by grading reason. So the $h$-function corresponding to $HFL^{-}(s_{1}, s_{2})$ can only have the pattern as in Case 2 or Case 3. Once the $h$-function is determined for $HFL^{-}(s_{1}, s_{2})$, the $h$-functions for $HFL^{-}(s_{1}, s_{2}+1)$ and $HFL^{-}(s_{1}+1, s_{2})$ are also determined by  Corollary \ref{coro1}. Thus  there are two possibilities for the $h$-function corresponding to $\widehat{HFL}(s_{1}, s_{2})$ where $d_{2}$ may be nontrivial:

\begin{figure}[H]
\begin{picture}(100,70)(120,0)
\put(10,10){\framebox(110,60)}
\put(100, 20){\makebox(0,0){$h+1$}}
\put(65, 20){\makebox(0,0){$h+2$}}
\put(30, 20){\makebox(0,0){$h+2$}}
\put(100, 40){\makebox(0,0){$h$}}
\put(65, 40){\makebox(0,0){$h+1$}}
\put(30, 40){\makebox(0,0){$h+2$}}
\put(100, 60){\makebox(0,0){$h$}}
\put(65, 60){\makebox(0,0){$h$}}
\put(30, 60){\makebox(0,0){$h+1$}}
\put(60, 1){\makebox(0,0)[I]{Case $(1a)$}}

\put(180,10){\framebox(110,60)}
\put(200, 20){\makebox(0,0){$h+3$}}
\put(270, 20){\makebox(0,0){$h+1$}}
\put(235, 20){\makebox(0,0){$h+2$}}
\put(200, 40){\makebox(0,0){$h+2$}}
\put(270, 40){\makebox(0,0){$h$}}
\put(235, 40){\makebox(0,0){$h+1$}}
\put(200, 60){\makebox(0,0){$h+1$}}
\put(270, 60){\makebox(0,0){$h$}}
\put(235, 60){\makebox(0,0){$h$}}
\put(230, 1){\makebox(0,0)[I]{Case $(1b)$}}
\end{picture}
\end{figure}

In both cases, $HFL^{-}(s_{1}+1, s_{2}+1)=\F[-2h+1], HFL^{-}(s_{1}, s_{2}+1)=\F[-2h]\oplus \F[-2h-1]$ and $HFL^{-}(s_{1}+1, s_{2})=\F[-2h]\oplus \F[-2h-1]$. By Lemma \ref{U-action}, $U_{1}a=b$ and $U_{2}a=c$ where $a$ is the unique generator in $HFL^{-}(s_{1}+1, s_{2}+1)$, and $b, c$ are generators with homological grading $-2h-1$ in $ HFL^{-}(s_{1}, s_{2}+1)$ and $HFL^{-}(s_{1}+1, s_{2})$ respectively. So the image of $a$ under the differential $d_{1}$ is nonzero and $a$ does not survive in $E^{2}$-page of the spectral sequence. Thus $d_{2}$ is trivial in both Case $(1a)$ and Case $(1b)$. 

In Case 2, $HFL^{-}(s_{1}+1, s_{2}+1)=\F[-2h]$. In order to have nontrivial $d_{2}$, $HFL^{-}(s_{1}, s_{2})$ must contain a generator with homological degree $[-2h-3]$. So the $h$-function of $HFL^{-}(s_{1}, s_{2})$ must have the pattern as in Case 3. Then $HFL^{-}(s_{1}, s_{2})\cong \F[-2h-2]\oplus \F[-2h-3]$. Corresponding to this case, there are four possibilities of the $h$-function for $\widehat{HFL}(s_{1}, s_{2})$:
\begin{figure}[H]
\begin{picture}(100,70)(165,0)
\put(1,10){\framebox(100,60)}
\put(15, 20){\makebox(0,0){$h+3$}}
\put(15, 40){\makebox(0,0){$h+2$}}
\put(15, 60){\makebox(0,0){$h+1$}}
\put(50, 20){\makebox(0,0){$h+2$}}
\put(50, 40){\makebox(0,0){$h+1$}}
\put(50, 60){\makebox(0,0){$h+1$}}
\put(85, 20){\makebox(0,0){$h+1$}}
\put(85, 40){\makebox(0,0){$h+1$}}
\put(85, 60){\makebox(0,0){$h$}}
\put(55, 1){\makebox(0,0)[I]{Case $(2a)$}}

\put(110,10){\framebox(100,60)}
\put(125, 20){\makebox(0,0){$h+3$}}
\put(125, 40){\makebox(0,0){$h+2$}}
\put(125, 60){\makebox(0,0){$h+2$}}
\put(160, 20){\makebox(0,0){$h+2$}}
\put(160, 40){\makebox(0,0){$h+1$}}
\put(160, 60){\makebox(0,0){$h+1$}}
\put(195, 20){\makebox(0,0){$h+2$}}
\put(195, 40){\makebox(0,0){$h+1$}}
\put(195, 60){\makebox(0,0){$h$}}
\put(165, 1){\makebox(0,0)[I]{Case $(2b)$}}

\put(220, 10){\framebox(100, 60)}
\put(235, 20){\makebox(0,0){$h+3$}}
\put(235, 40){\makebox(0,0){$h+2$}}
\put(235, 60){\makebox(0,0){$h+2$}}
\put(270, 20){\makebox(0,0){$h+2$}}
\put(270, 40){\makebox(0,0){$h+1$}}
\put(270, 60){\makebox(0,0){$h+1$}}
\put(305, 20){\makebox(0,0){$h+1$}}
\put(305, 40){\makebox(0,0){$h+1$}}
\put(305, 60){\makebox(0,0){$h$}}
\put(275, 1){\makebox(0,0)[I]{Case $(2c)$}}

\put(330, 10){\framebox(100, 60)}
\put(345, 20){\makebox(0,0){$h+3$}}
\put(345, 40){\makebox(0,0){$h+2$}}
\put(345, 60){\makebox(0,0){$h+1$}}
\put(380, 20){\makebox(0,0){$h+2$}}
\put(380, 40){\makebox(0,0){$h+1$}}
\put(380, 60){\makebox(0,0){$h+1$}}
\put(415, 20){\makebox(0,0){$h+2$}}
\put(415, 40){\makebox(0,0){$h+1$}}
\put(415, 60){\makebox(0,0){$h$}}
\put(385, 1){\makebox(0,0)[I]{Case $(2d)$}}

\end{picture}
\end{figure}

We will use the symmetry of Heegaard Floer link homology to compute $\widehat{HFL}(s_{1}, s_{2})$. Let $h^{\ast}=h(-s_{1}, -s_{2})$. By Remark 3.6, $h(-s_{1}, -s_{2}-1)-h(-s_{1}, -s_{2})=1-(h(s_{1}, s_{2})-h(s_{1}, s_{2}+1))$ and $h(-s_{1}-1, -s_{2})-h(-s_{1}, -s_{2})=1-(h(s_{1}, s_{2})-h(s_{1}+1, s_{2}))$. So the $h$-function for $\widehat{HFL}(-s_{1}, -s_{2})$ corresponding to these four subcases are  :
\begin{figure}[H]
\begin{picture}(100,60)(165,5)
\put(1,10){\framebox(100,60)}
\put(20, 20){\makebox(0,0){$h^{\ast}+1$}}
\put(20, 40){\makebox(0,0){$h^{\ast}+1$}}
\put(20, 60){\makebox(0,0){$h^{\ast}$}}
\put(60, 20){\makebox(0,0){$h^{\ast}+1$}}
\put(60, 40){\makebox(0,0){$h^{\ast}$}}
\put(60, 60){\makebox(0,0){$h^{\ast}$}}
\put(90, 20){\makebox(0,0){$h^{\ast}$}}
\put(90, 40){\makebox(0,0){$h^{\ast}$}}
\put(90, 60){\makebox(0,0){$h^{\ast}$}}
\put(55, 1){\makebox(0,0)[I]{Dual-h $(2a)$}}

\put(110,10){\framebox(100,60)}
\put(125, 20){\makebox(0,0){$h^{\ast}+1$}}
\put(125, 40){\makebox(0,0){$h^{\ast}+1$}}
\put(125, 60){\makebox(0,0){$h^{\ast}+1$}}
\put(160, 20){\makebox(0,0){$h^{\ast}+1$}}
\put(160, 40){\makebox(0,0){$h^{\ast}$}}
\put(160, 60){\makebox(0,0){$h^{\ast}$}}
\put(195, 20){\makebox(0,0){$h^{\ast}+1$}}
\put(195, 40){\makebox(0,0){$h^{\ast}$}}
\put(195, 60){\makebox(0,0){$h^{\ast}$}}
\put(165, 1){\makebox(0,0)[I]{Dual-h $(2b)$}}

\put(220, 10){\framebox(100, 60)}
\put(236, 20){\makebox(0,0){$h^{\ast}+1$}}
\put(236, 40){\makebox(0,0){$h^{\ast}+1$}}
\put(236, 60){\makebox(0,0){$h^{\ast}$}}
\put(271, 20){\makebox(0,0){$h^{\ast}+1$}}
\put(271, 40){\makebox(0,0){$h^{\ast}$}}
\put(271, 60){\makebox(0,0){$h^{\ast}$}}
\put(306, 20){\makebox(0,0){$h^{\ast}+1$}}
\put(306, 40){\makebox(0,0){$h^{\ast}$}}
\put(306, 60){\makebox(0,0){$h^{\ast}$}}
\put(275, 1){\makebox(0,0)[I]{Dual-h $(2c)$}}

\put(330, 10){\framebox(100, 60)}
\put(347, 20){\makebox(0,0){$h^{\ast}+1$}}
\put(347, 40){\makebox(0,0){$h^{\ast}+1$}}
\put(347, 60){\makebox(0,0){$h^{\ast}+1$}}
\put(383, 20){\makebox(0,0){$h^{\ast}+1$}}
\put(383, 40){\makebox(0,0){$h^{\ast}$}}
\put(383, 60){\makebox(0,0){$h^{\ast}$}}
\put(418, 20){\makebox(0,0){$h^{\ast}$}}
\put(418, 40){\makebox(0,0){$h^{\ast}$}}
\put(418, 60){\makebox(0,0){$h^{\ast}$}}
\put(385, 1){\makebox(0,0)[I]{Dual-h $(2d)$}}

\end{picture}
\end{figure}

Observe that in all these four cases for $\widehat{HFL}(-s_{1}, -s_{2})$, $HFL^{-}(-s_{1}+1, -s_{2}+1)=0$, so $d_{2}=0$ in the spectral sequence corresponding to  $\widehat{HFL}(-s_{1}, -s_{2})$. Now the computation of $\widehat{HFL}(-s_{1}, -s_{2})$ is quite straightforward. 

In Dual-h $(2a)$, \[\widehat{HFL}(-s_{1}, -s_{2}) :\simeq \left[
\begin{tikzcd}
\mathbb{F}[-2h^{\ast}+1] \arrow[swap]{d}{U_{2}}  & 0 \arrow{l}{U_{1}} \arrow[swap]{d}{U_{2}}  \\%
\mathbb{F}[-2h^{\ast}]  & \mathbb{F}[-2h^{\ast}+1] \arrow{l}{U_{1}}
 \end{tikzcd}
\right] \]
By the grading reason, $d_{2}=U_{1}=U_{2}=0$. Then it is easy to obtain $\widehat{HFL}(-s_{1}, -s_{2})\cong \F[-2h^{\ast}]\oplus \F[-2h^{\ast}]\oplus \F[-2h^{\ast}]$ and the Euler characteristic $\chi= 3$. By symmetry,  $\widehat{HFL}(s_{1}, s_{2})$ should contain 3 generators with same total gradings. Observe that $HFL^{-}(s_{1}, s_{2})=\F[-2h-2]\oplus \F[-2h-3]$. By grading reason, the generator with total grading $-2h-2$ survives in $\widehat{HFL}(s_{1}, s_{2})$. Thus $\widehat{HFL}(s_{1}, s_{2})\cong \F[-2h-2]\oplus\F[-2h-2]\oplus \F[-2h-2]$.  The Euler characteristic $\chi=3$

In Dual-h $(2b)$, \[\widehat{HFL}(-s_{1}, -s_{2}) :\simeq \left[
\begin{tikzcd}
0 \arrow[swap]{d}{U_{2}}  & 0 \arrow{l}{U_{1}} \arrow[swap]{d}{U_{2}}  \\%
\mathbb{F}[-2h^{\ast}]  & 0 \arrow{l}{U_{1}}
 \end{tikzcd}
\right] \]
In this case, $\widehat{HFL}(-s_{1}, -s_{2})\cong \F[-2h^{\ast}]$. By the similar argument in Dual-h $(2a)$,  $\widehat{HFL}(L)(s_{1}, s_{2})\cong \mathbb{F}[-2h-2]$. The Euler Characteristic $\chi=1$.

 In Dual-h $(2c)$, \[\widehat{HFL}(-s_{1}, -s_{2}) :\simeq \left[
\begin{tikzcd}
\mathbb{F}[-2h^{\ast}+1] \arrow[swap]{d}{U_{2}}  & 0 \arrow{l}{U_{1}} \arrow[swap]{d}{U_{2}}  \\%
\mathbb{F}[-2h^{\ast}] & 0  \arrow{l}{U_{1}}
 \end{tikzcd}
\right] \]
By degree reason, $d_{2}=U_{1}=U_{2}=0$.Then $\widehat{HFL}(-s_{1}, -s_{2})\cong \mathbb{F}[-2h^{\ast}]\oplus \mathbb{F}[-2h^{\ast}]$. So $\widehat{HFL}(s_{1}, s_{2})\cong \F[-2h-2]\oplus \F[-2h-2]$ and the Euler Characteristic is $\chi=2$. 

In Dual-h $(2d)$, 
\[\widehat{HFL}(-s_{1}, -s_{2}) :\simeq \left[
\begin{tikzcd}
0  \arrow[swap]{d}{U_{2}}  & 0 \arrow{l}{U_{1}} \arrow[swap]{d}{U_{2}}  \\%
\mathbb{F}[-2h^{\ast}]  & \mathbb{F}[-2h^{\ast}+1]  \arrow{l}{U_{1}}
 \end{tikzcd}
\right] \]
So $\widehat{HFL}(L)(s_{1}, s_{2})\cong \mathbb{F}[-2h-2]\oplus \mathbb{F}[-2h-2]$ by similar argument above and the Euler Characteristic is $\chi=2$. 

Now we can consider Case 3. In this case, $HFL^{-}(s_{1}+1, s_{2}+1) \cong \F[-2h]\oplus \F[-2h-1]$. Then there are three possibilities for $HFL^{-}(s_{1}, s_{2})$ if $d_{2}$ may be nontrivial. $HFL^{-}(s_{1}, s_{2})$ is either $\F[-2h-4]$ or $\F[-2h-4]\oplus \F[-2h-5]$ or $\F[-2h-3]$. If $HFL^{-}(s_{1}, s_{2})=\F[-2h-4]$, its  $h$-function is shown in  Case $(3a)$ and if $HFL^{-}(s_{1}, s_{2})\cong \F[-2h-4]\oplus \F[-2h-5]$, its $h$-function is shown in Case $(3b)$ :
\begin{figure}[H]
\begin{picture}(100,70)(120,0)
\put(10,10){\framebox(110,60)}
\put(100, 20){\makebox(0,0){$h+2$}}
\put(65, 20){\makebox(0,0){$h+3$}}
\put(30, 20){\makebox(0,0){$h+3$}}
\put(100, 40){\makebox(0,0){$h+1$}}
\put(65, 40){\makebox(0,0){$h+2$}}
\put(30, 40){\makebox(0,0){$h+3$}}
\put(100, 60){\makebox(0,0){$h$}}
\put(65, 60){\makebox(0,0){$h+1$}}
\put(30, 60){\makebox(0,0){$h+2$}}
\put(60, 1){\makebox(0,0)[I]{Case $(3a)$}}

\put(180,10){\framebox(110,60)}
\put(200, 20){\makebox(0,0){$h+4$}}
\put(270, 20){\makebox(0,0){$h+2$}}
\put(235, 20){\makebox(0,0){$h+3$}}
\put(200, 40){\makebox(0,0){$h+3$}}
\put(270, 40){\makebox(0,0){$h+1$}}
\put(235, 40){\makebox(0,0){$h+2$}}
\put(200, 60){\makebox(0,0){$h+2$}}
\put(270, 60){\makebox(0,0){$h$}}
\put(235, 60){\makebox(0,0){$h+1$}}
\put(230, 1){\makebox(0,0)[I]{Case $(3b)$}}
\end{picture}
\end{figure}

In Case $(3a)$ and Case $(3b)$, we observe that both  generators in $HFL^{-}(s_{1}+1, s_{2}+1)$ have nontrivial images in $HFL^{-}(s_{1}, s_{2}+1)$ under $U_{1}$ and in $HFL^{-}(s_{1}+1, s_{2})$ under $U_{2}$ by Lemma \ref{U-action}. So the two generators have nontrivial images under the differential  $d_{1}=U$ and cannot survive in $E^{2}$-page. Thus $d_{2}$ is trivial in both cases. 

 If $HFL^{-}(s_{1}, s_{2})\cong \F[-2h-3]$, there are four possibilities for the  $h$-function corresponding to $\widehat{HFL}(s_{1}, s_{2})$:

\begin{figure}[H]
\begin{picture}(100,70)(165,0)
\put(1,10){\framebox(100,60)}
\put(15, 20){\makebox(0,0){$h+3$}}
\put(15, 40){\makebox(0,0){$h+2$}}
\put(15, 60){\makebox(0,0){$h+1$}}
\put(50, 20){\makebox(0,0){$h+2$}}
\put(50, 40){\makebox(0,0){$h+2$}}
\put(50, 60){\makebox(0,0){$h+1$}}
\put(85, 20){\makebox(0,0){$h+1$}}
\put(85, 40){\makebox(0,0){$h+1$}}
\put(85, 60){\makebox(0,0){$h$}}
\put(55, 1){\makebox(0,0)[I]{Case $(3c)$}}

\put(110,10){\framebox(100,60)}
\put(125, 20){\makebox(0,0){$h+3$}}
\put(125, 40){\makebox(0,0){$h+2$}}
\put(125, 60){\makebox(0,0){$h+2$}}
\put(160, 20){\makebox(0,0){$h+2$}}
\put(160, 40){\makebox(0,0){$h+2$}}
\put(160, 60){\makebox(0,0){$h+1$}}
\put(195, 20){\makebox(0,0){$h+2$}}
\put(195, 40){\makebox(0,0){$h+1$}}
\put(195, 60){\makebox(0,0){$h$}}
\put(165, 1){\makebox(0,0)[I]{Case $(3d)$}}

\put(220, 10){\framebox(100, 60)}
\put(235, 20){\makebox(0,0){$h+3$}}
\put(235, 40){\makebox(0,0){$h+2$}}
\put(235, 60){\makebox(0,0){$h+1$}}
\put(270, 20){\makebox(0,0){$h+2$}}
\put(270, 40){\makebox(0,0){$h+2$}}
\put(270, 60){\makebox(0,0){$h+1$}}
\put(305, 20){\makebox(0,0){$h+2$}}
\put(305, 40){\makebox(0,0){$h+1$}}
\put(305, 60){\makebox(0,0){$h$}}
\put(275, 1){\makebox(0,0)[I]{Case $(3e)$}}

\put(330, 10){\framebox(100, 60)}
\put(345, 20){\makebox(0,0){$h+3$}}
\put(345, 40){\makebox(0,0){$h+2$}}
\put(345, 60){\makebox(0,0){$h+2$}}
\put(380, 20){\makebox(0,0){$h+2$}}
\put(380, 40){\makebox(0,0){$h+2$}}
\put(380, 60){\makebox(0,0){$h+1$}}
\put(415, 20){\makebox(0,0){$h+1$}}
\put(415, 40){\makebox(0,0){$h+1$}}
\put(415, 60){\makebox(0,0){$h$}}
\put(385, 1){\makebox(0,0)[I]{Case $(3f)$}}

\end{picture}
\end{figure}

Let $h^{\ast}=h(-s_{1}, -s_{2})=h(s_{1}, s_{2})+s_{1}+s_{2}$. By similar argument above, we find the $h$-function of $\widehat{HFL}(-s_{1}, -s_{2})$ corresponding to each case:

\begin{figure}[H]
\begin{picture}(100,70)(165,5)
\put(1,20){\framebox(100,60)}
\put(15, 30){\makebox(0,0){$h^{\ast}$}}
\put(15, 50){\makebox(0,0){$h^{\ast}$}}
\put(17, 70){\makebox(0,0){$h^{\ast}-1$}}
\put(50, 30){\makebox(0,0){$h^{\ast}$}}
\put(50, 50){\makebox(0,0){$h^{\ast}$}}
\put(50, 70){\makebox(0,0){$h^{\ast}-1$}}
\put(85, 30){\makebox(0,0){$h^{\ast}-1$}}
\put(85, 50){\makebox(0,0){$h^{\ast}-1$}}
\put(85, 70){\makebox(0,0){$h^{\ast}-1$}}
\put(55, 11){\makebox(0,0)[I]{Dual-h $(3c)$}}

\put(110,20){\framebox(100,60)}
\put(125, 30){\makebox(0,0){$h^{\ast}$}}
\put(125, 50){\makebox(0,0){$h^{\ast}$}}
\put(125, 70){\makebox(0,0){$h^{\ast}$}}
\put(160, 30){\makebox(0,0){$h^{\ast}$}}
\put(160, 50){\makebox(0,0){$h^{\ast}$}}
\put(160, 70){\makebox(0,0){$h^{\ast}-1$}}
\put(195, 30){\makebox(0,0){$h^{\ast}$}}
\put(195, 50){\makebox(0,0){$h^{\ast}-1$}}
\put(195, 70){\makebox(0,0){$h^{\ast}-1$}}
\put(165, 11){\makebox(0,0)[I]{Dual-h $(3d)$}}

\put(220, 20){\framebox(100, 60)}
\put(235, 30){\makebox(0,0){$h^{\ast}$}}
\put(235, 50){\makebox(0,0){$h^{\ast}$}}
\put(235, 70){\makebox(0,0){$h^{\ast}$}}
\put(270, 30){\makebox(0,0){$h^{\ast}$}}
\put(270, 50){\makebox(0,0){$h^{\ast}$}}
\put(270, 70){\makebox(0,0){$h^{\ast}-1$}}
\put(305, 30){\makebox(0,0){$h^{\ast}-1$}}
\put(305, 50){\makebox(0,0){$h^{\ast}-1$}}
\put(305, 70){\makebox(0,0){$h^{\ast}-1$}}
\put(275, 11){\makebox(0,0)[I]{Dual-h $(3e)$}}

\put(330, 20){\framebox(100, 60)}
\put(345, 30){\makebox(0,0){$h^{\ast}$}}
\put(345, 50){\makebox(0,0){$h^{\ast}$}}
\put(347, 70){\makebox(0,0){$h^{\ast}-1$}}
\put(380, 30){\makebox(0,0){$h^{\ast}$}}
\put(380, 50){\makebox(0,0){$h^{\ast}$}}
\put(380, 70){\makebox(0,0){$h^{\ast}-1$}}
\put(415, 30){\makebox(0,0){$h^{\ast}$}}
\put(415, 50){\makebox(0,0){$h^{\ast}-1$}}
\put(415, 70){\makebox(0,0){$h^{\ast}-1$}}
\put(385, 11){\makebox(0,0)[I]{Dual-h $(3f)$}}

\end{picture}
\end{figure}

Observe that in all four cases, $HFL^{-}(-s_{1}, -s_{2})=0$. So $d_{2}$ is trivial in the spectral sequence corresponding to $\widehat{HFL}(-s_{1}, -s_{2})$. We compute $\widehat{HFL}(-s_{1}, -s_{2})$ and therefore $\widehat{HFL}(s_{1}, s_{2})$. 

In Dual-h$(3c)$, $\widehat{HFL}(-s_{1}, -s_{2})\cong \F[-2h^{\ast}+1]$. By symmetry, $\widehat{HFL}(s_{1}, s_{2})\cong \F[-2h-3]$ with Euler Characteristic  $\chi=-1$. 

In Case$(3d)$, $\widehat{HFL}(L)(s_{1}, s_{2})\cong \mathbb{F}[-2h-3]\oplus \mathbb{F}[-2h-3] \oplus \mathbb{F}[-2h-3]$ and the Euler Characteristic is $\chi=-3$ by similar computation above. 

In Case$(3e)$, $\widehat{HFL}(L)(s_{1}, s_{2})\cong \mathbb{F}[-2h-3]\oplus \mathbb{F}[-2h-3] $ and the Euler Characteristic is $\chi=-2$.

 In Case$(3f)$, $\widehat{HFL}(L)(s_{1}, s_{2})\cong \mathbb{F}[-2h-3]\oplus \mathbb{F}[-2h-3] $ and the Euler Characteristic is $\chi=-2$. 

Thus we  conclude that for any $L$-space link $L=L_{1}\cup L_{2}$ with two components, once the  $h$-function is determined, we can compute $\widehat{HFL}(s_{1}, s_{2})$ with any $(s_{1}, s_{2})\in \bH$. By the equations in Section 2.2, the $h$-function is determined by Alexander polynomials $\Delta_{L}(x_{1}, x_{2})$, $\Delta_{L_{1}}(t)$, $\Delta_{L_{2}}(t)$ and the linking number $lk(L_{1}, L_{2})$. 
\qed

Furthermore, We  also get a bound for rank$_{\F}(\widehat{HFL}(s_{1}, s_{2}))$ and the Euler characteristic $\chi(\widehat{HFL}(s_{1}, s_{2}))$ with any $(s_{1}, s_{2})\in \bH$.\\

\noindent
{\bf Proof of Corollary \ref{rank}:}
Consider the following short exact sequence:
\begin{equation}
\label{C(1)}
0\rightarrow CFL^{-}(s_{1}+1, s_{2}+1) \stackrel{U_{1}}{\longrightarrow} CFL^{-}(s_{1}, s_{2}+1)\rightarrow C_{1}(s_{1}, s_{2}+1)\rightarrow 0
\end{equation}
where $C_{1}(s_{1}, s_{2}+1)$ is the quotient complex with $(s_{1}, s_{2}+1)\in \bH$. By the computation of $\widehat{HFL}(s_{1}, s_{2})$, we have
\begin{equation}
\label{C(3)}
\widehat{CFL}(s_{1}, s_{2})\cong C_{1}(s_{1}, s_{2})/ U_{2}(C_{1}(s_{1}, s_{2}+1))
\end{equation}
Now we claim that rank$_{\F}(H_{\ast}(C_{1}(s_{1}, s_{2}+1)))\leq 2$ with any $(s_{1}, s_{2})\in \bH$. From the short exact sequence (\ref{C(1)}), we know that 
\begin{equation}
\label{C(2)}
\textup{rank}_{\F}(H_{\ast}(C_{1}(s_{1}, s_{2}+1)))\leq \textup{rank}_{\F}(HFL^{-}(s_{1}+1, s_{2}+1))+\textup{rank}_{\F}(HFL^{-}(s_{1}, s_{2}+1))
\end{equation}
If $\textup{rank}_{\F}(H_{\ast}(C_{1}(s_{1}, s_{2}+1)))\geq 3$, then at least one of $HFL^{-}(s_{1}+1, s_{2}+1)$ and $HFL^{-}(s_{1}, s_{2}+1)$ should have rank at least $2$, and the other one should have rank at least $1$. By the computation in Section 2.1, the possible $h$-functions corresponding to $HFL^{-}(s_{1}+1, s_{2}+1)$ and $HFL^{-}(s_{1}, s_{2}+1)$ are as follows:

\begin{figure}[H]
\begin{picture}(100,40)(165,40)
\put(10,40){\framebox(100,40)}

\put(25, 50){\makebox(0,0){$h+1$}}
\put(25, 70){\makebox(0,0){$h+1$}}

\put(60, 50){\makebox(0,0){$h+1$}}
\put(60, 70){\makebox(0,0){$h$}}

\put(95, 50){\makebox(0,0){$h$}}
\put(95, 70){\makebox(0,0){$h-1$}}
\put(65, 31){\makebox(0,0)[I]{Case $1$}}

\put(160,40){\framebox(100,40)}
\put(175, 50){\makebox(0,0){$h+2$}}
\put(175, 70){\makebox(0,0){$h+1$}}

\put(210, 50){\makebox(0,0){$h+1$}}
\put(210, 70){\makebox(0,0){$h$}}

\put(245, 50){\makebox(0,0){$h$}}
\put(245, 70){\makebox(0,0){$h$}}
\put(215, 31){\makebox(0,0)[I]{Case $2$}}

\put(310, 40){\framebox(100, 40)}

\put(325, 50){\makebox(0,0){$h+2$}}
\put(325, 70){\makebox(0,0){$h+1$}}

\put(360, 50){\makebox(0,0){$h+1$}}
\put(360, 70){\makebox(0,0){$h$}}

\put(395, 50){\makebox(0,0){$h$}}
\put(395, 70){\makebox(0,0){$h-1$}}
\put(365, 31){\makebox(0,0)[I]{Case $3$}}

\end{picture}
\end{figure}

Here we assume that the unique generator of $H_{\ast}(A^{-}(s_{1}, s_{2}+1))$ has homological grading $-2h$. In Case $1$, we have $U_{1}:\F[-2h+2]\oplus \F[-2h+1] \rightarrow \F[-2h]$. Let $\alpha$ denote the generator of $\F[-2h+2]\subseteq HFL^{-}(s_{1}+1, s_{2}+1)$ and let $\beta$ denote the generator of $\F[-2h]\cong HFL^{-}(s_{1}, s_{2}+1)$. By Lemma \ref{U-action}, $U(\alpha)=\beta$. Then $H_{\ast}(C_{1}(s_{1}))\cong \F[-2h+1]$. Its rank in Case $1$ is $1$. In Case $2$, we have $U_{1}: \F[-2h+1]\rightarrow \F[-2h]\oplus \F[-2h-1]$. By the similar argument, $H_{\ast}(C_{1}(s_{1}, s_{2}+1))\cong \F[-2h]$. Then it has rank $1$ in this case. In Case $3$, we have $U_{1}: \F[-2h+2]\oplus \F[-2h+1]\rightarrow \F[-2h]\oplus \F[-2h-1]$. The image of the generator of $\F[-2h+2]$ is the generator of $\F[-2h]$ and the image of the generator of $\F[-2h+1]$ is the generator of $\F[-2h-1]$. So $H_{\ast}(C_{1}(s_{1}, s_{2}+1))=0$. Thus with any $(s_{1}, s_{2})\in \bH$, rank$_{\F}(H_{\ast}(C_{1}(s_{1}, s_{2}+1)))\leq 2$. By Equation (\ref{C(3)}), rank$_{\F}(\widehat{HFL}(s_{1}, s_{2}))\leq \textup{rank}_{\F}(H_{\ast}(C_{1}(s_{1}, s_{2}+1)))+\textup{rank}_{\F}(H_{\ast}(C_{1}(s_{1}, s_{2})))\leq 2+2=4$ with any $(s_{1}, s_{2})\in \bH$. Therefore  $-4 \leq\chi(\widehat{HFL}(L, s_{1}, s_{2})) \leq 4$.
\qed

In fact, for the first example given in the proof of Theorem \ref{thm 2} where $d_{2}=0$, we have  $\chi(\widehat{HFL}(L, s_{1}, s_{2}))= -4$. Similarly, we can construct an example so that $\chi(\widehat{HFL}(L, s_{1}, s_{2}))= 4$. 
 
\begin{example}
Assume that the $h$-function corresponding to $\widehat{HFL}(s_{1}, s_{2})$ is as follows:
\begin{figure}[H]
\begin{picture}(100,60)(165,10)
\put(150,10){\framebox(100,60)}
\put(165, 20){\makebox(0,0){$h+2$}}
\put(165, 40){\makebox(0,0){$h+2$}}
\put(165, 60){\makebox(0,0){$h+1$}}
\put(200, 20){\makebox(0,0){$h+2$}}
\put(200, 40){\makebox(0,0){$h+1$}}
\put(200, 60){\makebox(0,0){$h+1$}}
\put(235, 20){\makebox(0,0){$h+1$}}
\put(235, 40){\makebox(0,0){$h+1$}}
\put(235, 60){\makebox(0,0){$h$}}
\end{picture}
\end{figure}

In this case, $\widehat{HFL}(s_{1}, s_{2})\cong \F[-2h-2]\oplus \F[-2h-2]\oplus \F[-2h-2] \oplus \F[-2h-2]$. Thus $\chi(\widehat{HFL}(s_{1}, s_{2}))=4$. 

\end{example}

\begin{example}
The Heegaard Floer link homology $\widehat{HFL}$ of the two-bridge link $b(20, -3)$. 

\begin{figure}[H]
\centering

\definecolor{linkcolor0}{rgb}{0.45, 0.15, 0.15}
\definecolor{linkcolor1}{rgb}{0.15, 0.15, 0.45}
\begin{tikzpicture}[line width=1, line cap=round, line join=round]
  \begin{scope}[color=linkcolor0]
    \draw (8.7, 2.36) .. controls (8.9, 2.59) and (8.93, 3.22) .. 
          (8.95, 3.68) .. controls (9, 4.77) and (6.42, 4.8) .. 
          (4.38, 4.83) .. controls (2.35, 4.85) and (0.18, 4.88) .. 
          (0.18, 4.00) .. controls (0.19, 3.49) and (0.19, 2.94) .. (0.56, 2.59);
    \draw (0.56, 2.59) .. controls (0.83, 2.33) and (1.20, 2.15) .. (1.42, 2.39);
    \draw (1.57, 2.55) .. controls (1.77, 2.78) and (2.13, 2.64) .. (2.38, 2.40);
    \draw (2.38, 2.40) .. controls (2.60, 2.21) and (2.91, 2.14) .. (3.10, 2.32);
    \draw (3.25, 2.40) .. controls (3.45, 2.71) and (3.81, 2.51) .. (4.06, 2.33); 
    \draw (4.06, 2.33) .. controls (4.28, 2.14) and (4.59, 2.07) .. (4.68, 2.25); 
   \draw (4.83, 2.41) .. controls (5.14, 2.71) and (5.71, 2.71) .. 
          (6.2, 2.70) .. controls (6.75, 2.70) and (7.33, 2.63) .. (7.75, 2.27);
    \draw (7.75, 2.27) .. controls (8, 2.07) and (8.34, 2.02) .. (8.57, 2.22);
   
    \draw[->] (0.3, 3.00) -- +(0.01, -0.05);
  \end{scope}
  \begin{scope}[color=linkcolor1]
     \draw (6.1, 1.16) .. controls (5.23, 0.49) and (4.19, 0.08) .. 
          (3.1, 0.13) .. controls (0.91, 0.17) and (0.08, 0.28) .. 
          (0.12, 0.92) .. controls (0.14, 1.21) and (0.16, 1.62) .. 
          (0.18, 1.80) .. controls (0.20, 1.93) and (0.21, 2.06) .. 
          (0.21, 2.19) .. controls (0.21, 2.36) and (0.34, 2.48) .. (0.49, 2.55);
    \draw (0.66, 2.64) .. controls (0.93, 2.77) and (1.26, 2.67) .. (1.50, 2.47);
    \draw (1.50, 2.47) .. controls (1.75, 2.26) and (2.09, 2.13) .. (2.31, 2.33);
    \draw (2.46, 2.47) .. controls (2.65, 2.66) and (2.96, 2.58) .. (3.18, 2.40);
    \draw (3.18, 2.40) .. controls (3.43, 2.19) and (3.77, 2.06) .. (3.99, 2.26);
    \draw (4.14, 2.40) .. controls (4.33, 2.59) and (4.64, 2.51) .. (4.86, 2.33); 
    \draw (4.86, 2.33) .. controls (5.31, 1.94) and (5.77, 1.56) .. (6, 1.24);
     
     \draw (6.15, 1.08) .. controls (6.62, 0.38) and (6.3, 0.22) .. 
          (7.39, 0.20) .. controls (8.36, 0.17) and (9.46, 0.22) .. 
          (9.44, 1.04) .. controls (9.42, 1.62) and (9.12, 1.97) .. (8.65, 2.31);

    \draw (8.65, 2.31) .. controls (8.28, 2.61) and (7.93, 2.61) .. (7.68, 2.42);
    \draw (7.53, 2.27) .. controls (7.05, 1.90) and (6.57, 1.53) .. (6.1, 1.16);
    \draw[->] (0.22, 2.00) -- +(0.01, 0.05);
  \end{scope}
\end{tikzpicture}
\caption{$b(20,  -3)$}
\end{figure}

Yajing Liu proved that the two-bridge link $L=b(20, -3)$ is an $L$-space link \cite[Theorem 3.8]{Liu}. Its two link components are both unknots with linking number $2$. Its normalized multi-variable Alexander polynomial is  \cite{ND}:
\begin{equation}
\begin{split}
\Delta_{L}(t_{1}, t_{2})= & t_{1}^{1/2}t_{2}^{3/2}+t_{1}^{3/2}t_{2}^{1/2}+t_{1}^{1/2}t_{2}^{-1/2}+t_{1}^{-1/2}t_{2}^{1/2}+t_{1}^{-3/2}t_{2}^{-1/2}+t_{1}^{-1/2}t_{2}^{-3/2} \\
& -t_{1}^{3/2}t_{2}^{3/2}-t_{1}^{1/2}t_{2}^{1/2}-t_{1}^{-1/2}t_{2}^{-1/2}-t_{1}^{-3/2}t_{2}^{-3/2}
\end{split}
\end{equation}

\begin{figure}[H]
\centering
\begin{tikzpicture}
\draw[help lines, color=gray!30, dashed] (-4.9,-4.9) grid (4.9,4.9);
\draw[->, color=gray, thick] (-5,0)--(5,0) node[right]{$s_{1}$};
\draw[->, color=gray, thick] (0,-5)--(0,5) node[above]{$s_{2}$};
\node[color=red] at (0.1, 0) {2};
\node[color=red] at (0.1, 1) {1};
\node[color=red] at (0.1, 2) {1};
\node[color=red] at (0.1, 3) {1};
\node[color=red] at (0.1, 4) {1};
\node[color=red] at (0.1, 5) {$\vdots$};
\node[color=red] at (0.1, -1) {2};
\node[color=red] at (0.2, -2) {3};
\node[color=red] at (0.1, -3) {4};
\node[color=red] at (0.1, -4) {5};
\node[color=red] at (0.1, -5) {$\vdots$};

\node[color=red] at (1.1, 0) {1};
\node[color=red] at (1.1, 1) {1};
\node[color=red] at (1.1, 2) {0};
\node[color=red] at (1.1, 3) {0};
\node[color=red] at (1.1, 4) {0};
\node[color=red] at (1.1, 5) {$\vdots$};
\node[color=red] at (1.1, -1) {2};
\node[color=red] at (1.1, -2) {3};
\node[color=red] at (1.1, -3) {4};
\node[color=red] at (1.1, -4) {5};
\node[color=red] at (1.1, -5) {$\vdots$};

\node[color=red] at (2.1, 0) {1};
\node[color=red] at (2.1, 1) {0};
\node[color=red] at (2.1, 2) {0};
\node[color=red] at (2.1, 3) {0};
\node[color=red] at (2.1, 4) {0};
\node[color=red] at (2.1, 5) {$\vdots$};
\node[color=red] at (2.1, -1) {2};
\node[color=red] at (2.1, -2) {3};
\node[color=red] at (2.1, -3) {4};
\node[color=red] at (2.1, -4) {5};
\node[color=red] at (2.1, -5) {$\vdots$};

\node[color=red] at (3.1, 0) {1};
\node[color=red] at (3.1, 1) {0};
\node[color=red] at (3.1, 2) {0};
\node[color=red] at (3.1, 3) {0};
\node[color=red] at (3.1, 4) {0};
\node[color=red] at (3.1, 5) {$\vdots$};
\node[color=red] at (3.1, -1) {2};
\node[color=red] at (3.1, -2) {3};
\node[color=red] at (3.1, -3) {4};
\node[color=red] at (3.1, -4) {5};
\node[color=red] at (3.1, -5) {$\vdots$};

\node[color=red] at (4.1, 0) {1};
\node[color=red] at (4.1, 1) {0};
\node[color=red] at (4.1, 2) {0};
\node[color=red] at (4.1, 3) {0};
\node[color=red] at (4.1, 4) {0};
\node[color=red] at (4.1, 5) {$\vdots$};
\node[color=red] at (4.1, -1) {2};
\node[color=red] at (4.1, -2) {3};
\node[color=red] at (4.1, -3) {4};
\node[color=red] at (4.1, -4) {5};
\node[color=red] at (4.1, -5) {$\vdots$};

\node[color=red] at (5, 0) {$\cdots$};
\node[color=red] at (5, 1) {$\cdots$};
\node[color=red] at (5, 2) {$\cdots$};
\node[color=red] at (5, 3) {$\cdots$};
\node[color=red] at (5, 4) {$\cdots$};
\node[color=red] at (5, 5) {$\vdots$};
\node[color=red] at (5, -1) {$\cdots$};
\node[color=red] at (5, -2) {$\cdots$};
\node[color=red] at (5, -3) {$\cdots$};
\node[color=red] at (5, -4) {$\cdots$};
\node[color=red] at (5, -5) {$\vdots$};

\node[color=red] at (-0.9, 0) {2};
\node[color=red] at (-0.9, 1) {2};
\node[color=red] at (-0.9, 2) {2};
\node[color=red] at (-0.9, 3) {2};
\node[color=red] at (-0.9, 4) {2};
\node[color=red] at (-0.9, 5) {$\vdots$};
\node[color=red] at (-0.8, -1) {3};
\node[color=red] at (-0.8, -2) {3};
\node[color=red] at (-0.9, -3) {4};
\node[color=red] at (-0.9, -4) {5};
\node[color=red] at (-0.9, -5) {$\vdots$};

\node[color=red] at (-1.8, 0) {3};
\node[color=red] at (-1.9, 1) {3};
\node[color=red] at (-1.9, 2) {3};
\node[color=red] at (-1.9, 3) {3};
\node[color=red] at (-1.9, 4) {3};
\node[color=red] at (-1.9, 5) {$\vdots$};
\node[color=red] at (-1.8, -1) {3};
\node[color=red] at (-1.8, -2) {4};
\node[color=red] at (-1.9, -3) {5};
\node[color=red] at (-1.9, -4) {6};
\node[color=red] at (-1.9, -5) {$\vdots$};

\node[color=red] at (-2.9, 0) {4};
\node[color=red] at (-2.9, 1) {4};
\node[color=red] at (-2.9, 2) {4};
\node[color=red] at (-2.9, 3) {4};
\node[color=red] at (-2.9, 4) {4};
\node[color=red] at (-2.9, 5) {$\vdots$};
\node[color=red] at (-2.9, -1) {4};
\node[color=red] at (-2.9, -2) {5};
\node[color=red] at (-2.9, -3) {6};
\node[color=red] at (-2.9, -4) {7};
\node[color=red] at (-2.9, -5) {$\vdots$};

\node[color=red] at (-4, 0) {5};
\node[color=red] at (-4, 1) {5};
\node[color=red] at (-4, 2) {5};
\node[color=red] at (-4, 3) {5};
\node[color=red] at (-4, 4) {5};
\node[color=red] at (-4, 5) {$\vdots$};
\node[color=red] at (-4, -1) {5};
\node[color=red] at (-4, -2) {6};
\node[color=red] at (-4, -3) {7};
\node[color=red] at (-4, -4) {8};
\node[color=red] at (-4, -5) {$\vdots$};

\node[color=red] at (-5, 0) {$\cdots$};
\node[color=red] at (-5, 1) {$\cdots$};
\node[color=red] at (-5, 2) {$\cdots$};
\node[color=red] at (-5, 3) {$\cdots$};
\node[color=red] at (-5, 4) {$\cdots$};
\node[color=red] at (-5, 5) {$\vdots$};
\node[color=red] at (-5, -1) {$\cdots$};
\node[color=red] at (-5, -2) {$\cdots$};
\node[color=red] at (-5, -3) {$\cdots$};
\node[color=red] at (-5, -4) {$\cdots$};
\node[color=red] at (-5, -5) {$\vdots$};

\node at (0, 2) {$\bullet$};
\node at (0, 1) {$\bullet$};
\node at (0, 0) {$\bullet$};
\node at (0, -1) {$\bullet$};
\node at (0, -2) {$\bullet$};

\node at (1, 2) {$\bullet$};
\node at (1, 1) {$\bullet$};
\node at (1, 0) {$\bullet$};
\node at (1, -1) {$\bullet$};

\node at (2, 2) {$\bullet$};
\node at (2, 1) {$\bullet$};
\node at (2, 0) {$\bullet$};

\node at (-1, -2) {$\bullet$};
\node at (-1, 1) {$\bullet$};
\node at (-1, 0) {$\bullet$};
\node at (-1, -1) {$\bullet$};

\node at (-2, -2) {$\bullet$};
\node at (-2, -1) {$\bullet$};
\node at (-2, 0) {$\bullet$};

\end{tikzpicture} 
\caption{$h$-function for $b(20, -3)$}
\end{figure}

Let $L_{1}$ and $L_{2}$ denote the unknot components. We obtain the normalized Alexander polynomials of $L_{1}$ and $L_{2}$:
$$\dfrac{t}{t-1} \Delta_{L_{1}}(t)=\dfrac{t}{t-1} \Delta_{L_{2}}(t)=1+t^{-1}+t^{-2}+t^{-3}+t^{-4}+\cdots$$

Using results of Section 2.2, we compute the $h$-function for $\widehat{HFL}(s_{1}, s_{2})$ with any $(s_{1}, s_{2})\in \bH$ by the above Alexander polynomials. The $h$-function is shown in Figure 4. The  numbers denote $h(s_{1}, s_{2})$ for any $(s_{1}, s_{2})\in \bH$. For example $h(0, 0)=h(-1, 0)=2$ and the black dot $\bullet$ denotes the  lattice point $(s_{1}, s_{2})\in \bH$ where $\widehat{HFL}(s_{1}, s_{2})$ is nonzero. By explicit computation, the link Floer homology $\widehat{HFL}(s_{1}, s_{2})$ is shown in Figure 5 for any $(s_{1}, s_{2})\in \bH$. 
 We observe that  $| \chi(s_{1}, s_{2})|=$rank$_{\F}(\widehat{HFL}(s_{1}, s_{2}))$ for any $(s_{1}, s_{2})\in \bH$ and the rank of $\widehat{HFL}(s_{1}, s_{2})$ ranges from $0$ to $4$ for this link $L$. This indicates that the bound for the rank in  Corollary \ref{rank} can be realized by some $L$-space link with some $(s_{1}, s_{2})\in \bH$. Here rank$_{\F}(\widehat{HFL}(2, 2))=1$,   rank$_{\F}(\widehat{HFL}(2, 1))=2$,  rank$_{\F}(\widehat{HFL}(1, 0))=3$ and rank$_{\F}(\widehat{HFL}(0, 0))=4$, rank$_{\F}(\widehat{HFL}(3, 0))=0$.

\begin{figure}[H]
\centering
\begin{tikzpicture}
\draw[help lines, color=gray!30, dashed] (-2.9,-2.9) grid (2.9,2.9);
\draw[->, color=gray, thick] (-3,0)--(3,0) node[right]{$s_{1}$};
\draw[->, color=gray, thick] (0,-3)--(0,3) node[above]{$s_{2}$};

\node at (0, 2) {$\F$};
\node at (0, 1) {$\F^{3}$};
\node at (0, 0) {$\F^{4}$};
\node at (0, -1) {$\F^{3}$};
\node at (0, -2) {$\F$};

\node at (1, 2) {$\F^{2}$};
\node at (1, 1) {$\F^{4}$};
\node at (1, 0) {$\F^{3}$};
\node at (1, -1) {$\F$};

\node at (2, 2) {$\F$};
\node at (2, 1) {$\F^{2}$};
\node at (2, 0) {$\F$};

\node at (-1, -2) {$\F^{2}$};
\node at (-1, 1) {$\F$};
\node at (-1, 0) {$\F^{3}$};
\node at (-1, -1) {$\F^{4}$};

\node at (-2, -2) {$\F$};
\node at (-2, -1) {$\F^{2}$};
\node at (-2, 0) {$\F$};

\end{tikzpicture} 
\caption{$\widehat{HFL}(b(20, -3))$}
\end{figure}
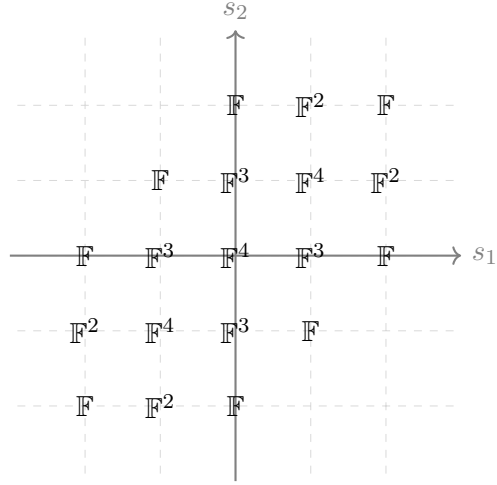

\end{example}

\section{Application to Thurston norm}

P.Ozsv\'ath and Z.Szab\'o showed that Heegaard Floer link homology  detects the Thurston norm of the link complement \cite{oss}. In Section 3, for any $L$-space link $L=L_{1}\cup L_{2}$ with two components and any $\mathbf{s}\in \bH$, we  computed $\widehat{HFL}(L, \mathbf{s})$ by using the Alexander polynomials $\Delta_{L}(t_{1}, t_{2})$, $\Delta_{L_{1}}(t)$, $\Delta_{L_{2}}(t)$ and the linking number $lk(L_{1}, L_{2})$. So we can compute the link Floer homology polytope for  the  link $L$ and also compute its dual Thurston polytope  and the Thurston (semi-)norm  \cite[Theorem 1.1]{oss}. 

In Section 1, we introduced complexity $\chi_{-}(F)$ for any compact oriented surface $F$ with boundary. For any link $L\subseteq S^{3}$, and any homology class $h\in H_{2}(S^{3}, L)$,  we can assign a function:
$$x(h)=\min \limits_{F\hookrightarrow S^{3}-\textup{nb}(L), [F]=h} \chi_{-}(F)$$
This function can be naturally extended to  a semi-norm, the \emph{Thurston semi-norm}, denoted by $x: H_{2}(S^{3}, L; \R)\rightarrow \R$.

\begin{theorem}\cite[Theorem 1]{Th}
The function $x: H_{2}(S^{3}, L; \R)\rightarrow \R$ is a semi-norm that vanishes exactly on the subspace spanned by embedded surfaces of non-negative Euler characteristic. 
\end{theorem}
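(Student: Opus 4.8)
The statement is Thurston's theorem, so the plan is to reproduce his argument, whose technical core is a single cut-and-paste construction for embedded oriented surfaces together with a compressing-disk normalization. Throughout, write $M=S^{3}-\textup{nb}(L)$ and regard $H_{2}(S^{3},L;\Z)\cong H_{2}(M,\partial M;\Z)$. First I would record the \emph{oriented double curve sum}: if $F_{1},F_{2}\hookrightarrow M$ are embedded oriented surfaces in general position representing classes $h_{1},h_{2}$, their intersection is a union of circles and properly embedded arcs, and resolving each of these compatibly with the coorientations produces an embedded oriented surface $F_{1}\ast F_{2}$ representing $h_{1}+h_{2}$. Because the resolution is a purely local move, it leaves the Euler characteristic unchanged, so $\chi(F_{1}\ast F_{2})=\chi(F_{1})+\chi(F_{2})$.

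Next I would prove the normalization lemma that every class has a $\chi_{-}$-minimizing representative that is incompressible and has no $S^{2}$ or $D^{2}$ components: a sphere is discarded, and a compressing or $\partial$-compressing disk is used to surger the surface, an operation that strictly drops $-\chi$ on a negative-Euler-characteristic component while fixing the homology class, so the process terminates. On such a representative $\chi_{-}=-\chi$. Granting this, subadditivity follows: choose incompressible minimizers $F_{1},F_{2}$ with no sphere or disk components, form $F_{1}\ast F_{2}$, and use an innermost-disk and innermost-sphere argument to remove any sphere or disk components that arise in the sum (reducing $|F_{1}\cap F_{2}|$ by isotopy and preserving $\chi$-additivity). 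The resulting surface $G$ then has $\chi_{-}(G)=-\chi(G)=-\chi(F_{1})-\chi(F_{2})=\chi_{-}(F_{1})+\chi_{-}(F_{2})$, whence
$$x(h_{1}+h_{2})\le \chi_{-}(G)=x(h_{1})+x(h_{2}).$$

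Homogeneity $x(nh)=|n|\,x(h)$ has the easy half from $n$ parallel push-offs of a minimizer (giving $x(nh)\le |n|\,x(h)$) together with $x(-h)=x(h)$ from orientation reversal. For the reverse inequality I would dualize: let $\alpha\in H^{1}(M;\Z)$ be Poincar\'e--Lefschetz dual to $h$ and represent it by $\psi\colon M\to S^{1}$, so that $nh$ is dual to $p_{n}\circ\psi$ with $p_{n}(z)=z^{n}$; a transverse norm-minimizing representative of $nh$, analyzed through this degree-$n$ fiber structure, should carry complexity at least $n$ times that of a single $\psi$-fiber, giving $x(nh)\ge n\,x(h)$. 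Establishing this lower bound rigorously, and in particular guaranteeing that no sphere or disk components survive in the double curve sum when $M$ is reducible (split links), is where I expect the main difficulty to lie, since $\chi$ is only additive before such components are cleared and $\chi_{-}$ ignores them.

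Finally, subadditivity and integer homogeneity make $x$ a nonnegative, symmetric, subadditive, and $\Z_{\ge 0}$-homogeneous function on the lattice $H_{2}(M,\partial M;\Z)$; by $\Q$-homogeneity and the Lipschitz control this provides, it extends uniquely to a continuous seminorm on $H_{2}(M,\partial M;\R)$, which is Thurston's semi-norm. For the null space, if $x(h)=0$ then a minimizer has every component of non-negative Euler characteristic, so $h$ is represented by a union of spheres, disks, tori and annuli and hence lies in the subspace $W$ spanned by classes of non-negative-Euler-characteristic surfaces; conversely each such class has $x=0$, and subadditivity plus homogeneity force $x\equiv 0$ on $W$. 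As the null space of a seminorm it is a rational subspace containing $W$ and agreeing with $W$ on the lattice, so it equals the $\R$-span of $W$, which exhibits $x$ as vanishing exactly on the subspace spanned by embedded surfaces of non-negative Euler characteristic.
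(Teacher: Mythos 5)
First, a point of order: the paper does not prove this statement --- it is Thurston's Theorem~1, quoted from \cite{Th} with a citation and used as background for the Thurston-norm computations of Section~4 --- so there is no in-paper proof to compare against, and your proposal must be judged as a reconstruction of Thurston's original argument. As such it has the right architecture (oriented double curve sum, normalization by compression, homogeneity via the circle-valued map dual to $h$, extension to a seminorm on $H_{2}(S^{3},L;\R)$, identification of the null space), and you are commendably honest about where the difficulties sit. But the steps you flag are genuine gaps rather than routine verifications, and one of them is the technical heart of the theorem.

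The crucial one is the sphere/disk problem in the double curve sum. Note the direction of the inequality: if $G=F_{1}\ast F_{2}$ acquires a nonempty union $P$ of components of positive Euler characteristic, then $\chi_{-}(G)=-\chi(G)+\chi(P)=\chi_{-}(F_{1})+\chi_{-}(F_{2})+\chi(P)$, which is \emph{larger} than $\chi_{-}(F_{1})+\chi_{-}(F_{2})$; so discarding such components after the fact gains nothing, and they must be prevented from arising by modifying $F_{1},F_{2}$ and their intersection pattern beforehand. The innermost-disk and innermost-arc isotopies you invoke for this require the $F_{i}$ incompressible and $M$ irreducible and $\partial$-irreducible, and these hypotheses fail precisely for the manifolds this paper cares about (split links give reducible complements, unknotted components give $\partial$-reducible ones); Thurston's proof works for arbitrary compact oriented $M$ and needs a more careful argument at exactly this point. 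Similarly, $x(nh)\ge n\,x(h)$ is not merely ``expected'': the clean argument is that a map $g\colon M\to S^{1}$ with $g^{-1}(1)=S$ representing the class dual to $nh$ factors as $p_{n}\circ\tilde g$, so $S$ itself decomposes as a disjoint union of $n$ embedded representatives of $h$ and the inequality follows at once --- you gesture at this but do not carry it out. Finally, ``the null space of a seminorm is a rational subspace'' is false for general seminorms; here it follows from the integrality of $x$ on the lattice together with Minkowski's theorem (an open convex symmetric set containing a line has infinite volume, hence contains a nonzero lattice point), and that step should be made explicit. In short: correct skeleton, same route as Thurston, but the load-bearing steps are left open.
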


Assume that $L\subseteq S^{3}$ is a link with $l$ components on $S^{3}$. Let $u_{i}$ denote the meridian of the $i^{th}$-component $L_{i}$ of $L$. Recall that  every lattice point $\bf{s}\in \bH$ can be written as 
$$\sum\limits_{i=1}^{l} s_{i}\cdot [u_{i}]$$
where $s_{i}\in \Q$ satisfies the property that 
$$2s_{i}+lk(L_{i}, L-L_{i})$$ 
is an even integer for $i=1, \cdots, l$.

In \cite{oss} the Heegaard Floer link homology  provides a function $y: H^{1}(S^{3}-L; \R)\rightarrow \R$ defined by the formula 
$$y(h)=\max\limits_{ \lbrace\mathbf{s}\in \bH\subseteq H_{1}(S^{3}-L; \R)|\widehat{HFL}(L, \mathbf{s})\neq 0\rbrace} |\langle s, h\rangle |$$
A trivial component of a link $L$ is an unknot component which is also unlinked from the rest of the link.

\begin{theorem} \cite[Theorem 1.1]{oss}
\label{thm4}
For an oriented link $L\subseteq S^{3}$ with no trivial components, its Heegaard Floer link homology detects the Thurston (semi-)norm  of the link complement. For each $h\in H^{1}(S^{3}-L; \R)$, we have 
$$x(\textup{PD}[h])+\sum\limits_{i=1}^{l}|\langle h, u_{i} \rangle |=2y(h)$$
where $u_{i}$ is the meridian of the $i$-th component of $L$, and $|\langle h, u_{i}\rangle|$ denotes the absolute value of the Kronecker paring of $h\in H^{1}(S^{3}-L; \R)$ and $u_{i}\in H_{1}(S^{3}-L; \R)$.  

\end{theorem}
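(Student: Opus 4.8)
The plan is to establish the claimed identity
$$x(\textup{PD}[h]) + \sum_{i=1}^{l} |\langle h, u_{i}\rangle| = 2y(h)$$
as two opposite inequalities, and to reduce the general real case to the integral case first. Both sides are continuous, homogeneous, piecewise-linear functions on $H^{1}(S^{3}-L; \R)$: the right-hand side is a semi-norm because $y$ is a maximum of absolute values of linear functionals, and the left-hand side is a sum of the Thurston semi-norm with another semi-norm. Hence it suffices to verify the identity on the dense set of integral classes $h$ lying in the interior of a top-dimensional cone of the Thurston norm, where every term is linear; the boundary cases and the extension to $\R$-coefficients then follow by continuity and piecewise linearity. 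First I would fix such an $h$, and let $F$ be a Thurston-norm-minimizing embedded surface with $[F]=\textup{PD}[h]$, normalized to discard closed components of non-negative Euler characteristic.

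For the inequality $2y(h)\le x(\textup{PD}[h]) + \sum_{i=1}^{l}|\langle h, u_{i}\rangle|$, the tool is the generalized adjunction inequality for link Floer homology. Concretely, I would show that whenever $\widehat{HFL}(L, \mathbf{s})\neq 0$, the Alexander multi-grading pairs against $h$ with controlled absolute value:
$$|\langle \mathbf{s}, h\rangle| \le \tfrac{1}{2}\Big(x(\textup{PD}[h]) + \sum_{i=1}^{l}|\langle h, u_{i}\rangle|\Big).$$
The mechanism is that the surface $F$, which meets the $i$-th component in $\langle h, u_{i}\rangle$ points and is Thurston-norm minimizing, bounds the range of multi-gradings that can carry nonzero homology; this is the link-theoretic analogue of the adjunction inequality bounding knot Floer homology by the Seifert genus. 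Taking the maximum over the support of $\widehat{HFL}(L,-)$ yields the stated bound on $y(h)$.

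The reverse inequality, sharpness, is the crux and the main obstacle. Here I would pass to sutured Floer homology: present the link exterior as a balanced sutured manifold with two meridional sutures on each boundary torus, so that $SFH$ of this sutured manifold recovers $\widehat{HFL}(L)$ together with its Alexander, equivalently relative $\mathrm{Spin}^{c}$, grading. The geometric input is Gabai's theorem that a taut sutured manifold admits a sutured-manifold hierarchy beginning with a decomposition along a Thurston-norm-minimizing surface isotopic to $F$. I would then invoke Juh\'asz's surface-decomposition theorem: decomposing $SFH$ along a well-groomed decomposing surface carries it isomorphically onto a nonzero direct summand concentrated in the outermost relative $\mathrm{Spin}^{c}$ structure determined by $F$. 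Since the terminal piece of the hierarchy is a product sutured manifold with $SFH=\F\neq 0$, that outermost summand is nonzero, which forces $\widehat{HFL}(L, \mathbf{s})\neq 0$ for the extremal lattice point $\mathbf{s}$ with $\langle \mathbf{s}, h\rangle = \tfrac{1}{2}(x(\textup{PD}[h]) + \sum_{i=1}^{l}|\langle h, u_{i}\rangle|)$. This exhibits a class realizing the bound, giving $2y(h)\ge x(\textup{PD}[h]) + \sum_{i=1}^{l}|\langle h, u_{i}\rangle|$.

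Combining the two inequalities gives equality on the dense set, and continuity finishes the general statement. The hardest and most technical step is the sharpness direction: one must (i) correctly identify $SFH$ of the meridionally sutured exterior with the extremal link Floer group, together with the precise translation of gradings and of the meridional correction terms $\sum_{i}|\langle h, u_{i}\rangle|$, and (ii) arrange the decomposing surface to be well-groomed so that Juh\'asz's theorem applies and the extremal summand survives the full hierarchy. The hypothesis that $L$ has no trivial components enters precisely here: it guarantees that the sutured exterior is taut and that no degenerate product summand corrupts the outermost $\mathrm{Spin}^{c}$ structure, so that the realization of the norm is genuine.
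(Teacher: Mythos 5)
The paper offers no proof of this statement: it is imported wholesale from \cite[Theorem 1.1]{oss} and used as a black box, so there is no internal argument to compare yours against. Taken on its own, your outline is a coherent and essentially correct strategy, and it correctly splits the problem into the two standard halves: the inequality $2y(h)\le x(\textup{PD}[h])+\sum_{i}|\langle h,u_{i}\rangle|$ via an adjunction-type bound on the Alexander multi-gradings supporting $\widehat{HFL}$ (this direction is essentially already in \cite{os2}), and sharpness via a taut decomposition along a norm-minimizing surface. But the sharpness argument you describe --- meridionally sutured link exterior, Gabai hierarchy, and a surface-decomposition theorem carrying the sutured Floer homology onto a nonzero extremal summand --- is Juh\'asz's later reproof, not the proof in \cite{oss}. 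Ozsv\'ath and Szab\'o instead reduce the link case to the knot case (relating $\widehat{HFL}(L)$ to the knot Floer homology of a knot built from $L$) and then invoke genus detection for knots \cite{os4}, whose geometric engine is Gabai's taut foliations together with the Eliashberg--Thurston perturbation and nonvanishing of Heegaard Floer invariants of symplectic semi-fillings. Both routes are valid; yours is the more streamlined modern one, but it leans on machinery postdating the cited source, and, as you concede, the genuinely hard content (the grading bookkeeping translating sutured Floer homology of the meridionally sutured exterior into the extremal $\widehat{HFL}$ group with its meridional correction terms, and the verification that the decomposing surfaces in the hierarchy can be made well-groomed) is deferred rather than carried out. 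I would also tighten your account of where ``no trivial components'' enters: the essential point is that a split unknotted component contributes $0$ to $x$ but $1$ to $|\langle h,u_{i}\rangle|$ while its $\widehat{HFL}$ support sits at Alexander grading zero, so the displayed identity simply fails for such links; it is not merely a matter of tautness of the sutured exterior.
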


The unit ball for the norm $x$ is called \emph{Thurston polytope}, and the unit ball for the norm $y$ is called \emph{link Floer homology polytope}, which is also the convex hull of those  $\mathbf{s}\in \bH$ for which $\widehat{HFL}(L, \mathbf{s})\neq 0$. The unit ball for the dual norm $x^{\ast}$ of $x$ in $H_{1}(S^{3}-L; \R)$ is called \emph{dual Thurston polytope}. By Theorem \ref{thm4}, twice link Floer homology polytope can be written as the sum of the dual Thurston polytope and an element of the symmetric hypercube in $H^{1}(S^{3}-L)$ with edge-length two \cite{oss}.  Next, we will give some examples of  $L$-space links with two components and compute their link Floer homology polytopes by using  the Alexander polynomials and linking numbers  in detail. Moreover, we will compute the dual Thurston polytopes and Thurston norms of link complements by Theorem \ref{thm4}. We also compare  the link Floer homology polytope and the convex hull of those $\mathbf{s}\in \bH$ for which $\chi(\widehat{HFL}(L, \mathbf{s}))\neq 0$.  

\begin{example}
Dual Thurston polytope for $L$-space link $L=L7n1$.

The link $L7n1$ in Figure 6 is an $L$-space link \cite[Example 3.17]{Liu}. The link component $L_{1}$ is an unknot and the other link component $L_{2}$ is a right-handed trefoil. The linking number is $2$ and the multi-variable Alexander polynomial is:
$$\Delta_{L}(t_{1}, t_{2})=t_{1}^{1/2}t_{2}^{3/2}+t_{1}^{-1/2}t_{2}^{-3/2}$$
The normalized Alexander polynomials of $L_{1}$ and $L_{2}$ are:
$$\dfrac{t}{t-1} \Delta_{L_{1}}(t)=1+t^{-1}+t^{-2}+t^{-3}+t^{-4}+\cdots$$
$$\dfrac{t}{t-1}\Delta_{L_{2}}(t)=t+t^{-1}+t^{-2}+t^{-3}+t^{-4}+\cdots$$

\begin{figure}[H]
\centering
\includegraphics[width=2.0in]{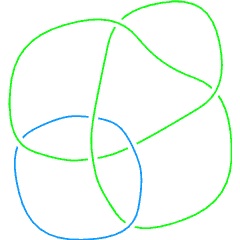} 
\caption{L7n1}
\end{figure}

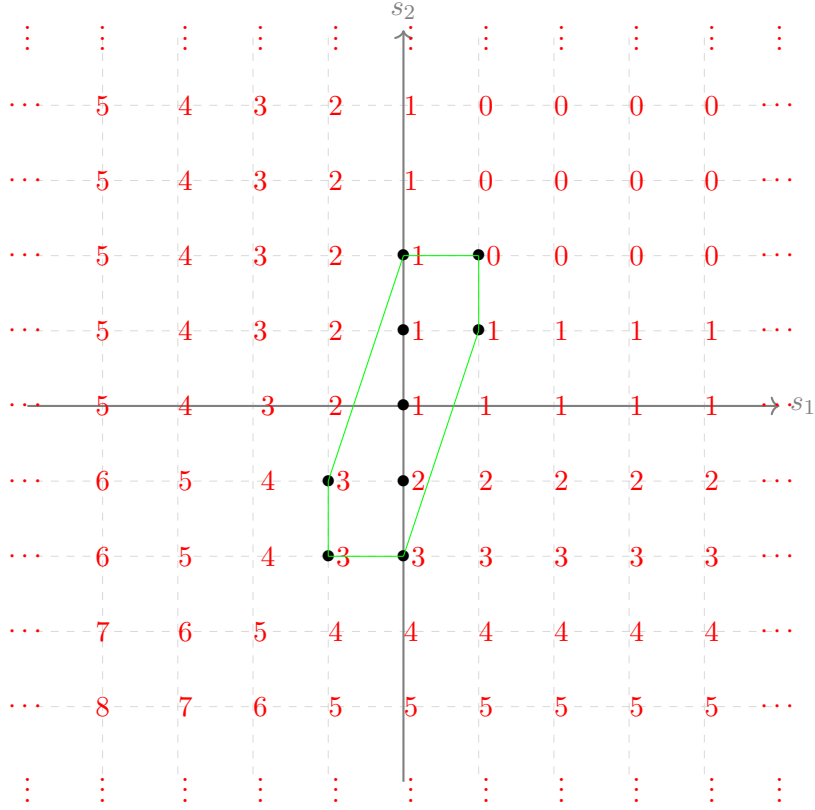
\begin{figure}[H]
\centering 
\begin{tikzpicture}
\draw[help lines, color=gray!30, dashed] (-4.9,-4.9) grid (4.9,4.9);
\draw[->, color=gray, thick] (-5,0)--(5,0) node[right]{$s_{1}$};
\draw[->, color=gray, thick] (0,-5)--(0,5) node[above]{$s_{2}$};
\node[color=red] at (0.2, 0) {1};
\node[color=red] at (0.2, 1) {1};
\node[color=red] at (0.2, 2) {1};
\node[color=red] at (0.1, 3) {1};
\node[color=red] at (0.1, 4) {1};
\node[color=red] at (0.1, 5) {$\vdots$};
\node[color=red] at (0.2, -1) {2};
\node[color=red] at (0.2, -2) {3};
\node[color=red] at (0.1, -3) {4};
\node[color=red] at (0.1, -4) {5};
\node[color=red] at (0.1, -5) {$\vdots$};

\node[color=red] at (1.1, 0) {1};
\node[color=red] at (1.2, 1) {1};
\node[color=red] at (1.2, 2) {0};
\node[color=red] at (1.1, 3) {0};
\node[color=red] at (1.1, 4) {0};
\node[color=red] at (1.1, 5) {$\vdots$};
\node[color=red] at (1.1, -1) {2};
\node[color=red] at (1.1, -2) {3};
\node[color=red] at (1.1, -3) {4};
\node[color=red] at (1.1, -4) {5};
\node[color=red] at (1.1, -5) {$\vdots$};

\node[color=red] at (2.1, 0) {1};
\node[color=red] at (2.1, 1) {1};
\node[color=red] at (2.1, 2) {0};
\node[color=red] at (2.1, 3) {0};
\node[color=red] at (2.1, 4) {0};
\node[color=red] at (2.1, 5) {$\vdots$};
\node[color=red] at (2.1, -1) {2};
\node[color=red] at (2.1, -2) {3};
\node[color=red] at (2.1, -3) {4};
\node[color=red] at (2.1, -4) {5};
\node[color=red] at (2.1, -5) {$\vdots$};

\node[color=red] at (3.1, 0) {1};
\node[color=red] at (3.1, 1) {1};
\node[color=red] at (3.1, 2) {0};
\node[color=red] at (3.1, 3) {0};
\node[color=red] at (3.1, 4) {0};
\node[color=red] at (3.1, 5) {$\vdots$};
\node[color=red] at (3.1, -1) {2};
\node[color=red] at (3.1, -2) {3};
\node[color=red] at (3.1, -3) {4};
\node[color=red] at (3.1, -4) {5};
\node[color=red] at (3.1, -5) {$\vdots$};

\node[color=red] at (4.1, 0) {1};
\node[color=red] at (4.1, 1) {1};
\node[color=red] at (4.1, 2) {0};
\node[color=red] at (4.1, 3) {0};
\node[color=red] at (4.1, 4) {0};
\node[color=red] at (4.1, 5) {$\vdots$};
\node[color=red] at (4.1, -1) {2};
\node[color=red] at (4.1, -2) {3};
\node[color=red] at (4.1, -3) {4};
\node[color=red] at (4.1, -4) {5};
\node[color=red] at (4.1, -5) {$\vdots$};

\node[color=red] at (5, 0) {$\cdots$};
\node[color=red] at (5, 1) {$\cdots$};
\node[color=red] at (5, 2) {$\cdots$};
\node[color=red] at (5, 3) {$\cdots$};
\node[color=red] at (5, 4) {$\cdots$};
\node[color=red] at (5, 5) {$\vdots$};
\node[color=red] at (5, -1) {$\cdots$};
\node[color=red] at (5, -2) {$\cdots$};
\node[color=red] at (5, -3) {$\cdots$};
\node[color=red] at (5, -4) {$\cdots$};
\node[color=red] at (5, -5) {$\vdots$};

\node[color=red] at (-0.9, 0) {2};
\node[color=red] at (-0.9, 1) {2};
\node[color=red] at (-0.9, 2) {2};
\node[color=red] at (-0.9, 3) {2};
\node[color=red] at (-0.9, 4) {2};
\node[color=red] at (-0.9, 5) {$\vdots$};
\node[color=red] at (-0.8, -1) {3};
\node[color=red] at (-0.8, -2) {3};
\node[color=red] at (-0.9, -3) {4};
\node[color=red] at (-0.9, -4) {5};
\node[color=red] at (-0.9, -5) {$\vdots$};

\node[color=red] at (-1.8, 0) {3};
\node[color=red] at (-1.9, 1) {3};
\node[color=red] at (-1.9, 2) {3};
\node[color=red] at (-1.9, 3) {3};
\node[color=red] at (-1.9, 4) {3};
\node[color=red] at (-1.9, 5) {$\vdots$};
\node[color=red] at (-1.8, -1) {4};
\node[color=red] at (-1.8, -2) {4};
\node[color=red] at (-1.9, -3) {5};
\node[color=red] at (-1.9, -4) {6};
\node[color=red] at (-1.9, -5) {$\vdots$};

\node[color=red] at (-2.9, 0) {4};
\node[color=red] at (-2.9, 1) {4};
\node[color=red] at (-2.9, 2) {4};
\node[color=red] at (-2.9, 3) {4};
\node[color=red] at (-2.9, 4) {4};
\node[color=red] at (-2.9, 5) {$\vdots$};
\node[color=red] at (-2.9, -1) {5};
\node[color=red] at (-2.9, -2) {5};
\node[color=red] at (-2.9, -3) {6};
\node[color=red] at (-2.9, -4) {7};
\node[color=red] at (-2.9, -5) {$\vdots$};

\node[color=red] at (-4, 0) {5};
\node[color=red] at (-4, 1) {5};
\node[color=red] at (-4, 2) {5};
\node[color=red] at (-4, 3) {5};
\node[color=red] at (-4, 4) {5};
\node[color=red] at (-4, 5) {$\vdots$};
\node[color=red] at (-4, -1) {6};
\node[color=red] at (-4, -2) {6};
\node[color=red] at (-4, -3) {7};
\node[color=red] at (-4, -4) {8};
\node[color=red] at (-4, -5) {$\vdots$};

\node[color=red] at (-5, 0) {$\cdots$};
\node[color=red] at (-5, 1) {$\cdots$};
\node[color=red] at (-5, 2) {$\cdots$};
\node[color=red] at (-5, 3) {$\cdots$};
\node[color=red] at (-5, 4) {$\cdots$};
\node[color=red] at (-5, 5) {$\vdots$};
\node[color=red] at (-5, -1) {$\cdots$};
\node[color=red] at (-5, -2) {$\cdots$};
\node[color=red] at (-5, -3) {$\cdots$};
\node[color=red] at (-5, -4) {$\cdots$};
\node[color=red] at (-5, -5) {$\vdots$};

\node at (0, 2) {$\bullet$};
\node at (0, 1) {$\bullet$};
\node at (0, 0) {$\bullet$};
\node at (0, -1) {$\bullet$};
\node at (0, -2) {$\bullet$};

\node at (1, 2) {$\bullet$};
\node at (1, 1) {$\bullet$};

\node at (-1, -2) {$\bullet$};
\node at (-1, -1) {$\bullet$};

\draw[color=green] (-1, -1)--(0,2);
\draw[color=green] (0, 2)--(1, 2);
\draw[color=green] (1, 2)--(1, 1);
\draw[color=green] (1, 1)--(0, -2);
\draw[color=green] (0, -2)--(-1, -2);
\draw[color=green] (-1, -2)--(-1, -1);

\end{tikzpicture} 
\caption{h-function for $L7n1$}
\end{figure}

\begin{figure}[H]
\centering 
\begin{tikzpicture}
\draw[help lines, color=gray!30, dashed] (-2.9,-2.9) grid (2.9,2.9);
\draw[->, color=gray, thick] (-2,0)--(2,0) node[right]{$s_{1}$};
\draw[->, color=gray, thick] (0,-3)--(0,3) node[above]{$s_{2}$};

\node at (0, 2) {$\F$};
\node at (0, 1) {$\F$};
\node at (0, 0) {$\F^{2}$};
\node at (0, -1) {$\F$};
\node at (0, -2) {$\F$};

\node at (1, 2) {$\F$};
\node at (1, 1) {$\F$};

\node at (-1, -2) {$\F$};
\node at (-1, -1) {$\F$};

\draw[color=green] (-1, -1)--(0,2);
\draw[color=green] (0, 2)--(1, 2);
\draw[color=green] (1, 2)--(1, 1);
\draw[color=green] (1, 1)--(0, -2);
\draw[color=green] (0, -2)--(-1, -2);
\draw[color=green] (-1, -2)--(-1, -1);

\end{tikzpicture} 
\caption{Link Floer homology polytope for $L7n1$}
\end{figure}
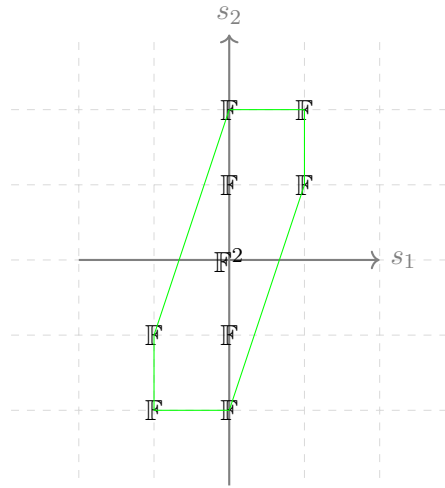

The $h$-function corresponding to $\widehat{HFL}(s_{1}, s_{2})$ with any $(s_{1}, s_{2})\in \bH$ is shown in Figure 7. In this figure, the  numbers denote the $h$-function, and $\bullet$ denotes the lattice point $(s_{1}, s_{2})\in \bH$ where $\widehat{HFL}(s_{1}, s_{2})$ is nonzero. By explicit computation, the link Floer homology $\widehat{HFL}(s_{1}, s_{2})$ is shown in Figure 8. 
 Moreover, $\widehat{HFL}(0, 0)\cong \F[-2]\oplus \F[-3]$, so $\chi(\widehat{HFL}(0, 0))$ is zero. For any other lattice point $(s_{1}, s_{2})$ denoted by $\bullet$ except $(0,0)$, $\widehat{HFL}(s_{1}, s_{2})$ has rank one and $\chi(\widehat{HFL}(s_{1}, s_{2}))$ is also nonzero. Thus in this example, the link Floer homology polytope  is the same with the convex hull of those $(s_{1}, s_{2})\in \bH$ for which $\chi(\widehat{HFL}(s_{1}, s_{2}))$ are nonzero. By Theorem \ref{thm4}, the dual Thurston polytope on $H_{1}(S^{3}-L; \R)$ is in Figure 9. 

\begin{figure}[H]
\begin{tikzpicture}
\draw (-1,1.5)--(1, 1.5);
\draw (-1, 0.5)--(1, 0.5);
\draw (-1, -0.5)--(1,-0.5);
\draw (-1,-1.5)--(1, -1.5);

\draw (-0.5,2)--(-0.5,-2);
\draw (0.5, 2)--(0.5,-2);

\filldraw [thick,  color=red]
(-0.5, -1.5) circle (3pt) -- (0.5, 1.5) circle (3pt);

\end{tikzpicture}
\caption{ Dual Thurston  polytope for $L7n1$}
\label{L3}
\end{figure}
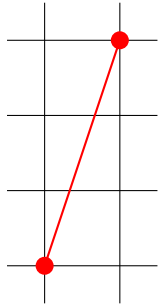

Here the thick red line is the dual Thurston polytope for $L7n1$. Observe that the dual Thurston polytope is the same with the Newton polytope of the Alexander polynomial $\Delta_{L}(t_{1}, t_{2})$. The unknot component of $L7n1$ bounds a surface $F_{L_{1}}$ with Euler characteristic $-1$ and the right-handed trefoil link component $L_{2}$ bounds a surface $F_{L_{2}}$ with Euler characteristic $-3$. The surfaces $F_{L_{1}}$ and $F_{L_{2}}$ have maximal Euler characteristic in their respective homology classes. 
\end{example}

\begin{example}
Dual Thurston polytope for pretzel link $L=b(-2, 3, 8)$

\begin{figure}[H]
\centering
\includegraphics[width=2.0in]{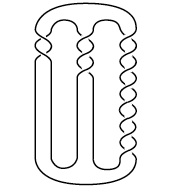} 
\caption{b(-2, 3, 8)}
\end{figure}

We claim that the pretzel link $b(-2, 3, 8)$  is an $L$-space link. It has two components. The link component $L_{1}$ is an unknot and the other link component $L_{2}$ is a right-handed trefoil shown in Figure 10. The linking number of this pretzel link is $5$. Let $P_{1}$ be the knot obtained from $b(-2, 3, 8)$ by $1$-Dehn surgery on unknotted component $L_{1}$. The knot $P_{1}$ is the $T(5, 6; 2, 1)$ twisted torus knot \cite[Proposition 3.1]{RR}. This twisted torus knot is an $L$-space knot proved by F. Vafaee \cite[Theorem 1]{Vafa}. Suppose that $d$ is sufficiently large. Then $S_{1, d}^{3}(L)=S_{d-25}^{3}(P_{1})$ is an $L$-space. The link components $L_{1}$ and $L_{2}$ are $L$-space knots, so $S_{1}^{3}(L_{1})$ and $S_{d}^{3}(L_{2})$ are both $L$-spaces. We also have $d-25>0$, so the pretzel link $b(-2, 3, 8)$ is an $L$-space link by $L$-space surgery criterion \cite[Lemma 2.6]{Liu}. The symmetrized Alexander polynomial of $b(-2, 3, 8)$ is:
$$\Delta_{L}(t_{1}, t_{2})=t_{1}^{-2}t_{2}^{-3}+t_{1}^{-1}t_{2}^{-2}+1+t_{1}t_{2}+t_{1}^{2}t_{2}^{3}$$

\begin{figure}[H]
\centering 
\begin{tikzpicture}
\draw[help lines, color=gray!30, dashed, xshift=-0.5cm, yshift=0.5cm] (-2.9,-4.4) grid (3.9,3.9);
\draw[->, color=gray, thick] (-3,0)--(3,0) node[right]{$s_{1}$};
\draw[->, color=gray, thick] (0,-4)--(0,4) node[above]{$s_{2}$};

\node at (2.5, 3.5) {$\F$};
\node at (2.5, 2.5) {$\F$};
\node at (1.5, 3.5) {$\F$};
\node at (1.5, 2.5) {$\F$};
\node at (1.5, 0.5) {$\F$};
\node at (1.5, 1.5) {$\F$};
\node at (0.5, 1.5) {$\F$};
\node at (0.5, 0.5) {$\F^{2}$};
\node at (0.5, -0.5) {$\F$};
\node at (-0.5, 0.5) {$\F$};
\node at (-0.5, -0.5) {$\F^{2}$};
\node at (-0.5, -1.5) {$\F$};
\node at (-1.5, -0.5) {$\F$};
\node at (-1.5, -1.5) {$\F$};
\node at (-1.5, -2.5) {$\F$};
\node at (-1.5, -3.5) {$\F$};
\node at (-2.5, -3.5) {$\F$};
\node at (-2.5, -2.5) {$\F$};

\draw[color=green] (-2.5, -3.5)--(-2.5, -2.5);
\draw[color=green] (-2.5, -2.5)--(-1.5, -0.5);
\draw[color=green] (-1.5, -0.5)--(1.5,  3.5);
\draw[color=green] (1.5, 3.5)--( 2.5, 3.5);
\draw[color=green] (2.5, 3.5)--( 2.5, 2.5);
\draw[color=green] (2.5, 2.5)--( 1.5, 0.5);
\draw[color=green] (1.5, 0.5)--(-1.5,-3.5);
\draw[color=green] (-1.5, -3.5)--(-2.5,-3.5);

\end{tikzpicture} 
\caption{Link Floer homology polytope for $b(-2, 3, 8)$}
\end{figure}
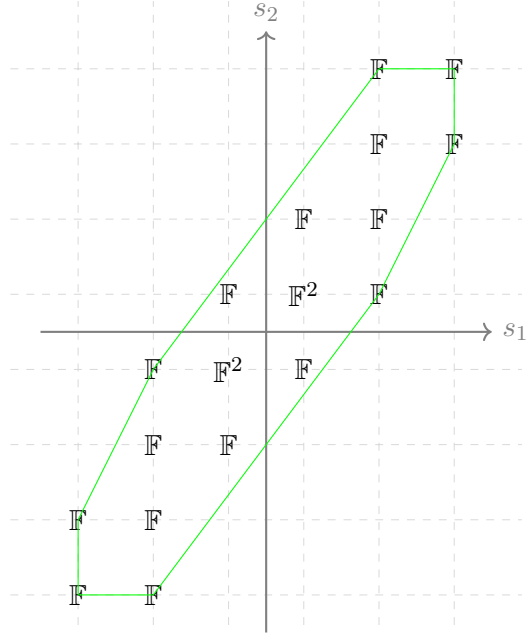

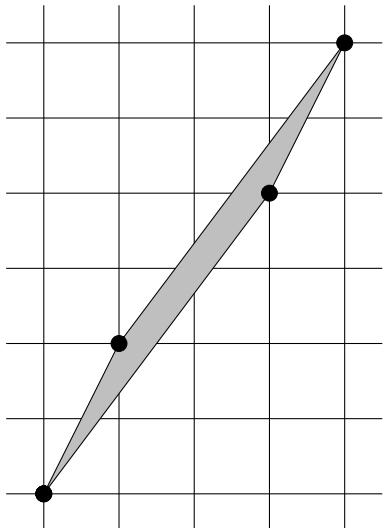
\begin{figure}[H]
\begin{tikzpicture}
\draw (-2.5,-3)--(2.5, -3);
\draw (-2.5,-2)--(2.5, -2);
\draw (-2.5,-1)--(2.5, -1);
\draw (-2.5, 0)--(2.5, 0);
\draw (-2.5,1)--(2.5, 1);
\draw (-2.5,2)--(2.5, 2);
\draw (-2.5,3)--(2.5, 3);

\draw (-2,-3.5)--(-2, 3.5);
\draw (-1,-3.5)--(-1, 3.5);
\draw (0,-3.5)--(0, 3.5);
\draw (1,-3.5)--(1, 3.5);
\draw (2,-3.5)--(2, 3.5);

\fill [fill=lightgray] 
(-2,-3)--(-1,-1)--(2,3)--(1,1)--(-2,-3);

\filldraw
(-2, -3) circle (3pt) -- (-1, -1) circle (3pt) -- (2, 3) circle (3pt) -- (1, 1) circle (3pt) -- (-2, -3) circle (3pt);
 \end{tikzpicture}
\caption{Dual Thurston polytope for $b(-2, 3, 8)$}
\label{L2}
\end{figure}

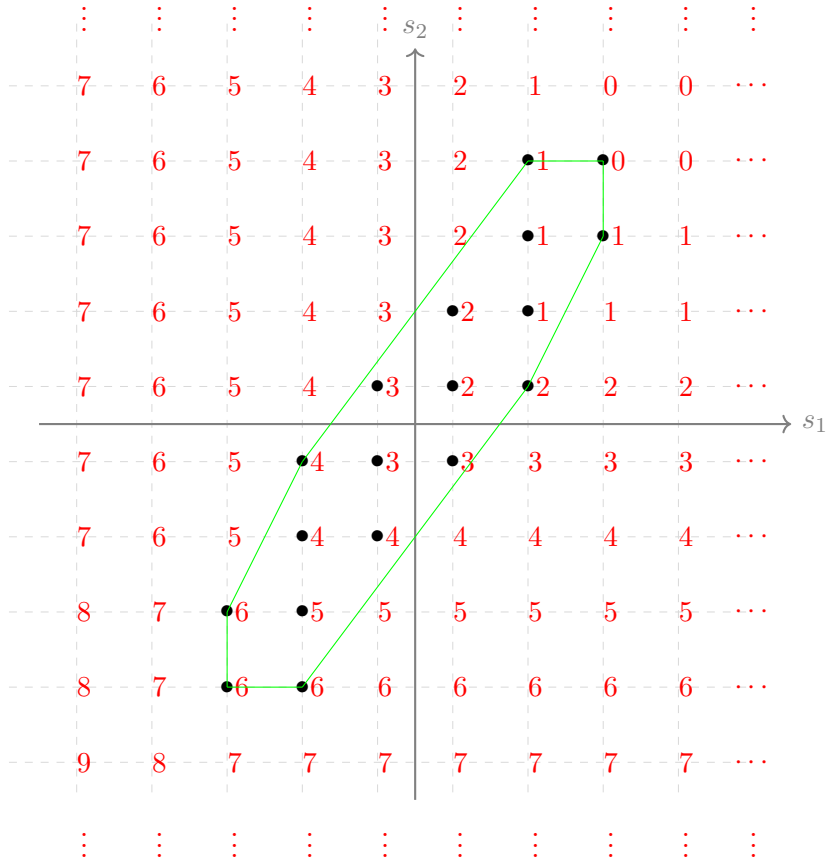
\begin{figure}[H]
\centering 
\begin{tikzpicture}
\draw[help lines, color=gray!30, dashed, xshift=-0.5cm, yshift=0.5cm] (-4.9,-5.4) grid (4.9,4.9);
\draw[->, color=gray, thick] (-5,0)--(5,0) node[right]{$s_{1}$};
\draw[->, color=gray, thick] (0,-5)--(0,5) node[above]{$s_{2}$};
\node[color=red] at (0.7, 0.5) {2};
\node[color=red] at (0.7, 1.5) {2};
\node[color=red] at (0.6, 2.5) {2};
\node[color=red] at (0.6, 3.5) {2};
\node[color=red] at (0.6, 4.5) {2};
\node[color=red] at (0.6, 5.5) {$\vdots$};
\node[color=red] at (0.7, -0.5) {3};
\node[color=red] at (0.6, -1.5) {4};
\node[color=red] at (0.6, -2.5) {5};
\node[color=red] at (0.6, -3.5) {6};
\node[color=red] at (0.6, -4.5) {7};
\node[color=red] at (0.6, -5.5) {$\vdots$};

\node[color=red] at (1.7, 0.5) {2};
\node[color=red] at (1.7, 1.5) {1};
\node[color=red] at (1.7, 2.5) {1};
\node[color=red] at (1.7, 3.5) {1};
\node[color=red] at (1.6, 4.5) {1};
\node[color=red] at (1.6, 5.5) {$\vdots$};
\node[color=red] at (1.6, -0.5) {3};
\node[color=red] at (1.6, -1.5) {4};
\node[color=red] at (1.6, -2.5) {5};
\node[color=red] at (1.6, -3.5) {6};
\node[color=red] at (1.6, -4.5) {7};
\node[color=red] at (1.6, -5.5) {$\vdots$};

\node[color=red] at (2.6, 0.5) {2};
\node[color=red] at (2.6, 1.5) {1};
\node[color=red] at (2.7, 2.5) {1};
\node[color=red] at (2.7, 3.5) {0};
\node[color=red] at (2.6, 4.5) {0};
\node[color=red] at (2.6, 5.5) {$\vdots$};
\node[color=red] at (2.6, -0.5) {3};
\node[color=red] at (2.6, -1.5) {4};
\node[color=red] at (2.6, -2.5) {5};
\node[color=red] at (2.6, -3.5) {6};
\node[color=red] at (2.6, -4.5) {7};
\node[color=red] at (2.6, -5.5) {$\vdots$};

\node[color=red] at (3.6, 0.5) {2};
\node[color=red] at (3.6, 1.5) {1};
\node[color=red] at (3.6, 2.5) {1};
\node[color=red] at (3.6, 3.5) {0};
\node[color=red] at (3.6, 4.5) {0};
\node[color=red] at (3.6, 5.5) {$\vdots$};
\node[color=red] at (3.6, -0.5) {3};
\node[color=red] at (3.6, -1.5) {4};
\node[color=red] at (3.6, -2.5) {5};
\node[color=red] at (3.6, -3.5) {6};
\node[color=red] at (3.6, -4.5) {7};
\node[color=red] at (3.6, -5.5) {$\vdots$};

\node[color=red] at (4.5, 0.5) {$\cdots$};
\node[color=red] at (4.5, 1.5) {$\cdots$};
\node[color=red] at (4.5, 2.5) {$\cdots$};
\node[color=red] at (4.5, 3.5) {$\cdots$};
\node[color=red] at (4.5, 4.5) {$\cdots$};
\node[color=red] at (4.5, 5.5) {$\vdots$};
\node[color=red] at (4.5, -0.5) {$\cdots$};
\node[color=red] at (4.5, -1.5) {$\cdots$};
\node[color=red] at (4.5, -2.5) {$\cdots$};
\node[color=red] at (4.5, -3.5) {$\cdots$};
\node[color=red] at (4.5, -4.5) {$\cdots$};
\node[color=red] at (4.5, -5.5) {$\vdots$};

\node[color=red] at (-0.3, 0.5) {3};
\node[color=red] at (-0.4, 1.5) {3};
\node[color=red] at (-0.4, 2.5) {3};
\node[color=red] at (-0.4, 3.5) {3};
\node[color=red] at (-0.4, 4.5) {3};
\node[color=red] at (-0.4, 5.5) {$\vdots$};
\node[color=red] at (-0.3, -0.5) {3};
\node[color=red] at (-0.3, -1.5) {4};
\node[color=red] at (-0.4, -2.5) {5};
\node[color=red] at (-0.4, -3.5) {6};
\node[color=red] at (-0.4, -4.5) {7};
\node[color=red] at (-0.4, -5.5) {$\vdots$};

\node[color=red] at (-1.4, 0.5) {4};
\node[color=red] at (-1.4, 1.5) {4};
\node[color=red] at (-1.4, 2.5) {4};
\node[color=red] at (-1.4, 3.5) {4};
\node[color=red] at (-1.4, 4.5) {4};
\node[color=red] at (-1.4, 5.5) {$\vdots$};
\node[color=red] at (-1.3, -0.5) {4};
\node[color=red] at (-1.3, -1.5) {4};
\node[color=red] at (-1.3, -2.5) {5};
\node[color=red] at (-1.3, -3.5) {6};
\node[color=red] at (-1.4, -4.5) {7};
\node[color=red] at (-1.4, -5.5) {$\vdots$};

\node[color=red] at (-2.4, 0.5) {5};
\node[color=red] at (-2.4, 1.5) {5};
\node[color=red] at (-2.4, 2.5) {5};
\node[color=red] at (-2.4, 3.5) {5};
\node[color=red] at (-2.4, 4.5) {5};
\node[color=red] at (-2.4, 5.5) {$\vdots$};
\node[color=red] at (-2.4, -0.5) {5};
\node[color=red] at (-2.4, -1.5) {5};
\node[color=red] at (-2.3, -2.5) {6};
\node[color=red] at (-2.3, -3.5) {6};
\node[color=red] at (-2.4, -4.5) {7};
\node[color=red] at (-2.4, -5.5) {$\vdots$};

\node[color=red] at (-3.4, 0.5) {6};
\node[color=red] at (-3.4, 1.5) {6};
\node[color=red] at (-3.4, 2.5) {6};
\node[color=red] at (-3.4, 3.5) {6};
\node[color=red] at (-3.4, 4.5) {6};
\node[color=red] at (-3.4, 5.5) {$\vdots$};
\node[color=red] at (-3.4, -0.5) {6};
\node[color=red] at (-3.4, -1.5) {6};
\node[color=red] at (-3.4, -2.5) {7};
\node[color=red] at (-3.4, -3.5) {7};
\node[color=red] at (-3.4, -4.5) {8};
\node[color=red] at (-3.4, -5.5) {$\vdots$};

\node[color=red] at (-4.4, 0.5) {7};
\node[color=red] at (-4.4, 1.5) {7};
\node[color=red] at (-4.4, 2.5) {7};
\node[color=red] at (-4.4, 3.5) {7};
\node[color=red] at (-4.4, 4.5) {7};
\node[color=red] at (-4.4, 5.5) {$\vdots$};
\node[color=red] at (-4.4, -0.5) {7};
\node[color=red] at (-4.4, -1.5) {7};
\node[color=red] at (-4.4, -2.5) {8};
\node[color=red] at (-4.4, -3.5) {8};
\node[color=red] at (-4.4, -4.5) {9};
\node[color=red] at (-4.4, -5.5) {$\vdots$};

\node at (2.5, 3.5) {$\bullet$};
\node at (2.5, 2.5) {$\bullet$};
\node at (1.5, 3.5) {$\bullet$};
\node at (1.5, 2.5) {$\bullet$};
\node at (1.5, 0.5) {$\bullet$};
\node at (1.5, 1.5) {$\bullet$};
\node at (0.5, 1.5) {$\bullet$};
\node at (0.5, 0.5) {$\bullet$};
\node at (0.5, -0.5) {$\bullet$};
\node at (-0.5, 0.5) {$\bullet$};
\node at (-0.5, -0.5) {$\bullet$};
\node at (-0.5, -1.5) {$\bullet$};
\node at (-1.5, -0.5) {$\bullet$};
\node at (-1.5, -1.5) {$\bullet$};
\node at (-1.5, -2.5) {$\bullet$};
\node at (-1.5, -3.5) {$\bullet$};
\node at (-2.5, -3.5) {$\bullet$};
\node at (-2.5, -2.5) {$\bullet$};

\draw[color=green] (-2.5, -3.5)--(-2.5, -2.5);
\draw[color=green] (-2.5, -2.5)--(-1.5, -0.5);
\draw[color=green] (-1.5, -0.5)--(1.5,  3.5);
\draw[color=green] (1.5, 3.5)--( 2.5, 3.5);
\draw[color=green] (2.5, 3.5)--( 2.5, 2.5);
\draw[color=green] (2.5, 2.5)--( 1.5, 0.5);
\draw[color=green] (1.5, 0.5)--(-1.5,-3.5);
\draw[color=green] (-1.5, -3.5)--(-2.5,-3.5);

\end{tikzpicture} 
\caption{h-function for $b(-2, 3, 8)$}
\end{figure}

The $h$-function corresponding to $\widehat{HFL}(s_{1}, s_{2})$ with any $(s_{1}, s_{2})\in \bH$ is shown in Figure 13. By explicit computation, the link Floer homology $\widehat{HFL}(s_{1}, s_{2})$ is shown in Figure 11. In Figure 11, $\textup{rank}_{\F}(\widehat{HFL}(1/2, 1/2))=\chi(\widehat{HFL}(1/2, 1/2))=2$ and $\textup{rank}_{\F}(\widehat{HFL}(-1/2, -1/2))=\chi(\widehat{HFL}(-1/2, -1/2))=2$. The link Floer homology polytope is the same with the convex hull of those $(s_{1}, s_{2})\in \bH$ for which $\chi(\widehat{HFL}(s_{1}, s_{2}))$ are nonzero. By Theorem \ref{thm4}, the dual Thurston polytope for $b(-2, 3, 8)$ is the shaded area in Figure 12.

\end{example}

\begin{remark}
For the $L$-space links $L7n1$ and $b(-2, 3, 8)$, the Thurston polytopes are both dual to the Newton polytopes of their symmetrized Alexander polynomials $\Delta_{L}(t_{1}, t_{2})$. P. Ozsv\'ath and Z.Szab\'o point out that the Thurston polytope of an alternating link is dual to the Newton polytope of its multi-variable Alexander polynomial \cite{oss}. This is also true for $L$-space knot. A natural question is whether the Thurston polytope of an $L$-space link with two components (which is not a split union of two $L$-space knots) is dual to the Newton polytope of its symmetrized Alexander polynomial. 
\end{remark}

\section{Two-component $L$-space links with vanishing Alexander polynomials}

In Section 4, we have given examples of $L$-space links  where  the Thurston polytopes are dual to the Newton polytopes of their symmetrized Alexander polynomials. In this section, we mainly discuss $2$-component $L$-space links with vanishing Alexander polynomials, especially split $L$-space links with two components. Recall that  the multi-variable Alexander polynomials for split links are $0$. So the Newton polytopes for split $L$-space links are empty, but the link Floer homology polytopes may be nontrivial. To see this in detail, we need some lemmas first.

\begin{lemma}\cite[Example 1.13(A)]{Liu}
Split disjoint unions of $L$-space knots are $L$-space links.
\end{lemma}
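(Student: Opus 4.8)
The plan is to reduce a surgery on a split union to a connected sum of surgeries on the individual knots, and then to invoke the behaviour of $L$-spaces under connected sum. Throughout I use Definition 2.2: to exhibit $L=K_1\sqcup\cdots\sqcup K_l$ as an $L$-space link it suffices to produce integers $p_1,\dots,p_l$ such that $S^3_{n_1,\dots,n_l}(L)$ is an $L$-space whenever $n_i\geq p_i$ for every $i$, where each $K_i$ is an $L$-space knot.

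The key geometric observation is that, because $L$ is split, its components lie in pairwise disjoint balls $B_i\subset S^3$ with $K_i\subset B_i$, and the surgeries on distinct components are performed independently. Decomposing $S^3$ as a connected sum of copies of $S^3$ with the $i$-th copy carrying $K_i$, a Dehn surgery supported in $B_i$ only alters the $i$-th summand. Hence for any integral framings one has a diffeomorphism
\[
S^3_{n_1,\dots,n_l}(L)\;\cong\;S^3_{n_1}(K_1)\,\#\,\cdots\,\#\,S^3_{n_l}(K_l).
\]
I would establish this by the standard ball-decomposition argument; this is the only point that uses the splitness hypothesis in an essential way.

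Next, since each $K_i$ is an $L$-space knot, by Ozsv\'ath--Szab\'o there is a threshold $p_i$ (one may take $p_i=2g(K_i)-1$) so that $S^3_{n_i}(K_i)$ is an $L$-space for every $n_i\geq p_i$; in particular each such surgered manifold is a rational homology sphere with $\widehat{HF}$ of rank one in every spin$^c$ structure. The final step is that a connected sum of $L$-spaces is again an $L$-space. For this I would apply the K\"unneth formula for the hat-version of Heegaard Floer homology under connected sum: if $Y_1,Y_2$ are rational homology spheres then $b_1(Y_1\#Y_2)=0$ and, spin$^c$ structure by spin$^c$ structure,
\[
\widehat{HF}(Y_1\#Y_2,\,s_1\# s_2)\;\cong\;\widehat{HF}(Y_1,s_1)\otimes_{\F}\widehat{HF}(Y_2,s_2).
\]
Because the tensor product of two rank-one $\F$-modules is again rank one, iterating over the $l$ summands shows that $\#_i\,S^3_{n_i}(K_i)$ is a rational homology sphere whose $\widehat{HF}$ has rank one in each spin$^c$ structure, hence an $L$-space. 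Taking the $p_i$ above then exhibits $L$ as an $L$-space link.

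The homological input is entirely routine bookkeeping once the connected-sum statement is in hand, so I expect the main (if modest) obstacle to be stating the connected-sum decomposition of the surgered manifold carefully for split links, together with the observation that the rational-homology-sphere condition is preserved under connected sum so that the $L$-space criterion genuinely applies.
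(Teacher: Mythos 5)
Your argument is correct: the connected-sum decomposition $S^3_{n_1,\dots,n_l}(L)\cong S^3_{n_1}(K_1)\#\cdots\#S^3_{n_l}(K_l)$ for a split link, the thresholds $p_i=2g(K_i)-1$ for $L$-space knots, and the K\"unneth formula over $\F$ showing that a connected sum of $L$-spaces is an $L$-space together give exactly what Definition 2.2 requires. The paper itself offers no proof, citing this as Example 1.13(A) of Liu, and your argument is the standard one given there, so there is nothing to flag.
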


\begin{lemma}\cite[Proposition 3.11]{BG}
\label{lem 1}
For a split  $L$-space link $L=L_{1}\sqcup L_{2}$ with two components which are both $L$-space knots and any  $(s_{1}, s_{2})\in \bH$, the $h$-function $h(s_{1}, s_{2})$ satisfies that 
$$h(s_{1}, s_{2})=h_{1}(s_{1})+h_{2}(s_{2})$$
where $h_{1}(s_{1})$ and $h_{2}(s_{2})$ denote the $h$-functions of knots $L_{1}$ and $L_{2}$ respectively.  
\end{lemma}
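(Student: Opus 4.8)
The plan is to reduce the statement to a tensor product decomposition of the generalized Floer complex and then to read off the $h$-function from a Künneth computation over the principal ideal domain $\F[U]$. First I would recall that for the split union $L = L_{1}\sqcup L_{2}$ one can choose a Heegaard diagram which is a connected sum of a diagram for $L_{1}$ with a diagram for $L_{2}$, the connected sum being performed in the region of a pair of free basepoints. At the level of the filtered complex $CFL^{-}$ the Alexander multi-grading then splits: the first Alexander grading only records the data of the $L_{1}$-diagram and the second only that of the $L_{2}$-diagram. Consequently the sub-quotient complexes defining $A^{-}$ (Definition 2.3) respect this splitting, and I would establish the quasi-isomorphism
\[
A^{-}(L, s_{1}, s_{2}) \;\simeq\; A^{-}(L_{1}, s_{1}) \otimes_{\F[U]} A^{-}(L_{2}, s_{2}),
\]
where the tensor product is taken over $\F[U]$ with $U_{1}$ and $U_{2}$ both acting as the single variable $U$, consistent with the homotopy $U_{1}\simeq U_{2}$ used throughout the paper. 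Equivalently, I could invoke the large surgery formula: $H_{\ast}(A^{-}(L, s_{1}, s_{2}))$ computes $HF^{-}$ of a large surgery on $L$, and for a split link this surgery is the connected sum $S^{3}_{n_{1}}(L_{1}) \# S^{3}_{n_{2}}(L_{2})$, so the same decomposition follows from the connected sum formula for $HF^{-}$.

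Next I would compute homology. Since $L_{1}$ and $L_{2}$ are $L$-space knots, Proposition 1.1 applied to each knot gives that $H_{\ast}(A^{-}(L_{i}, s_{i}))$ is a free $\F[U]$-module of rank one, whose generator sits in homological grading $-2h_{i}(s_{i})$. Writing $P = H_{\ast}(A^{-}(L_{1}, s_{1}))$ and $Q = H_{\ast}(A^{-}(L_{2}, s_{2}))$, the Künneth short exact sequence over the PID $\F[U]$ reads
\[
0 \to P \otimes_{\F[U]} Q \to H_{\ast}(A^{-}(L, s_{1}, s_{2})) \to \mathrm{Tor}_{1}^{\F[U]}(P, Q) \to 0.
\]
Because both $P$ and $Q$ are free, the $\mathrm{Tor}$ term vanishes and the middle term is simply $\F[U] \otimes_{\F[U]} \F[U] \cong \F[U]$. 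This both recovers Proposition 1.1 for the split link and, more importantly, identifies the generator of $H_{\ast}(A^{-}(L, s_{1}, s_{2}))$ with the tensor product of the two generators.

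Finally I would track gradings. Since $U$ has homological degree $-2$, the generator of the tensor product lies in homological grading $(-2h_{1}(s_{1})) + (-2h_{2}(s_{2})) = -2\bigl(h_{1}(s_{1})+h_{2}(s_{2})\bigr)$, which by the definition of the $h$-function equals $-2h(s_{1}, s_{2})$; hence $h(s_{1}, s_{2}) = h_{1}(s_{1})+h_{2}(s_{2})$. I expect the main obstacle to be the first step: making the chain-level tensor decomposition precise, including the verification that the two Alexander filtrations split and that $U_{1}, U_{2}$ may legitimately be identified with the single $U$ over which the tensor product is formed. Once this structural input is in place — either directly from a split Heegaard diagram or via the connected sum formula for large surgeries together with the additivity of the grading shifts in the large surgery formula — the Künneth argument and the grading bookkeeping are routine.
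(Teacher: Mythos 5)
The paper does not actually prove this lemma: it is imported verbatim as a citation of \cite[Proposition 3.11]{BG}, so there is no internal proof to compare against. Your argument is, however, essentially the standard proof of that cited result, and it is correct; the second route you sketch (the large surgery formula identifying $H_{\ast}(A^{-}(L,s_{1},s_{2}))$ with $HF^{-}$ of $S^{3}_{n_{1}}(L_{1})\# S^{3}_{n_{2}}(L_{2})$, followed by the connected sum formula for $HF^{-}$ and the additivity of the grading of the generator, i.e. of the $d$-invariant) is the clean one and is the argument one finds in the literature. One caution about your primary route: the chain-level identity $A^{-}(L,s_{1},s_{2})\simeq A^{-}(L_{1},s_{1})\otimes_{\F[U]}A^{-}(L_{2},s_{2})$ is not literally available from a naive connected sum of diagrams, because a diagram for a split link requires an extra stabilization (or free basepoint with its accompanying $\alpha/\beta$ pair), and this extra handle is precisely the source of the $(\F\oplus\F_{(-1)})$ factor appearing in the paper's own Theorem 1.7 for $\widehat{HFL}$ of a split union. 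For the minus/$A^{-}$ flavour this extra factor does not change the rank-one free module or the grading of its generator, so your conclusion survives, but the quasi-isomorphism as you state it needs that justification; your own identification of this step as "the main obstacle" is accurate, and the large-surgery route sidesteps it entirely. The Künneth and grading bookkeeping are fine: both factors are free over the PID $\F[U]$, the Tor term vanishes, and the normalization $h=h_{1}=h_{2}=0$ for $s_{1},s_{2}\gg 0$ confirms there is no hidden grading shift.
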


\begin{remark}
$L$-space knots can be regarded as special $L$-space links with just one component.  For any $L$-space knot $K\subseteq S^{3}$, we can associate a  chain complex $A^{-}(s_{1})$ filtered by Alexander grading  and $H_{\ast}(A^{-}(s_{1}))$ has a unique generator for any $s_{1}$. Let  $-2h(s_{1})$ be  the homological grading of this unique generator. 
\end{remark}

\begin{proposition}
Let $L=L_{1}\sqcup L_{2}$ be the split union of two $L$-space knots $L_{1}$ and $L_{2}$. Then $\widehat{HFL}(L, s_{1}, s_{2})\cong \widehat{HFL}(L_{1}, s_{1})\otimes \widehat{HFL}(L_{2}, s_{2})\otimes (\F\oplus\F_{(-1)})$ with any $(s_{1}, s_{2})\in \bH$. 
\end{proposition}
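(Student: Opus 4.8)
The plan is to show that both sides of the claimed isomorphism are governed by the $h$-function and then to exploit the splitting $h(s_1,s_2)=h_1(s_1)+h_2(s_2)$ of Lemma \ref{lem 1} to reduce the two-variable computation of Theorem \ref{thm3} to the one-variable data of the two knots. On the right-hand side, since $L_1$ and $L_2$ are $L$-space knots, each $\widehat{HFK}(L_i,s_i)$ has rank $0$ or $1$ and is detected by the Alexander polynomial: writing $a^{L_i}_s$ as in Section 2.2 so that $h_i(s-1)-h_i(s)=a^{L_i}_s\in\{0,1\}$, the coefficient of $\Delta_{L_i}$ equals the second difference $a^{L_i}_{s}-a^{L_i}_{s+1}=h_i(s-1)-2h_i(s)+h_i(s+1)$, so $\widehat{HFK}(L_i,s_i)\cong\F$ (in a single Maslov grading) exactly when $h_i$ has a \emph{corner} at $s_i$, and is $0$ otherwise. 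First I would record the precise Maslov grading of each nonzero $\widehat{HFK}(L_i,s_i)$ so the final comparison can be made on the nose.

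The key structural observation is that because $h=h_1+h_2$, every local $2\times 2$ square of $h$-values is \emph{additively separable}: with $p=h_1(m_1-1)-h_1(m_1)=a^{L_1}_{m_1}$ and $q=h_2(m_2-1)-h_2(m_2)=a^{L_2}_{m_2}$, the four corners of the square at $(m_1,m_2)$ are determined by $p,q$ alone. Consequently only Cases $1,2,3,6$ of the Section 2.1 classification can occur, the non-separable Cases $4,5$ being impossible, and one reads off that $HFL^-(L_1\sqcup L_2,m_1,m_2)\cong\F[-2h]\oplus\F[-2h-1]$ when $p=q=1$ and $HFL^-(L_1\sqcup L_2,m_1,m_2)=0$ otherwise. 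Thus $HFL^-$ of the split link is supported precisely on the product of the ``jump sets'' of $h_1$ and $h_2$.

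Next I would run the spectral sequence of Theorem \ref{thm3}, whose $E^1$-page is the sum of $HFL^-$ over the four vertices $(s_1+\epsilon_1,s_2+\epsilon_2)$ with $d_1=U_1-U_2$ computed via Lemma \ref{U-action}, organizing the computation by whether each knot has a corner at $s_i$. If both knots have a corner, then exactly one of the four vertices satisfies $p=q=1$ (the vertex is selected by the two corner types), so $E^1\cong\F^2$ sits at a single vertex, forcing $d_1=0$ and $\widehat{HFL}(s_1,s_2)\cong\F^2$. If exactly one knot has a corner, the nonzero $E^1$ groups occur at two vertices joined by a $U_i$-edge which, by the grading computation of Lemma \ref{U-action}, is an isomorphism $\F^2\to\F^2$, so $E^2=0$; and if neither has a corner the square is either identically zero or a tensor of two acyclic two-term complexes, again giving $E^2=0$. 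In every case $E^1$ is supported at a single vertex or is killed already at $E^2$, so $d_2=0$ automatically, the sequence collapses, and $\rank_{\F}\widehat{HFL}(s_1,s_2)=\rank_{\F}\widehat{HFK}(L_1,s_1)\cdot\rank_{\F}\widehat{HFK}(L_2,s_2)\cdot 2$ exactly; this is the computational shadow of the Künneth principle for split unions.

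The main obstacle is not the rank count but pinning down the absolute Maslov gradings, so as to identify the rank-$2$ answer with $\widehat{HFK}(L_1,s_1)\otimes\widehat{HFK}(L_2,s_2)\otimes(\F\oplus\F_{(-1)})$ rather than merely an abstract rank-$2$ group. I would handle this by tracking the two gradings of the surviving $\F[-2h]\oplus\F[-2h-1]$ from Case $6$ (they differ by $1$, coming from the cube generator $a$ and the edge generator $b-d$) back through $-2h(s_1,s_2)=-2h_1(s_1)-2h_2(s_2)$, using the gradings of $\widehat{HFK}(L_i,s_i)$ recorded in the first step; the symmetry $\widehat{HFL}(\mathbf{s})\cong\widehat{HFL}(-\mathbf{s})$ of Lemma \ref{symmetry} together with $h(-\mathbf{s})=h(\mathbf{s})+|\mathbf{s}|$ provides a cross-check on the direction of the grading shift. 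This step is purely bookkeeping, but it is where the $\F\oplus\F_{(-1)}$ factor, the signature of a split union as opposed to a connected sum, must be produced precisely.
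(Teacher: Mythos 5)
Your proposal is correct and follows essentially the same route as the paper: both reduce to the splitting $h(s_{1},s_{2})=h_{1}(s_{1})+h_{2}(s_{2})$ of Lemma 5.2, characterize the nonvanishing of $\widehat{HFK}(L_{i},s_{i})$ by the local behaviour (``corners'') of $h_{i}$, and then evaluate the spectral sequence of Theorem 3.2 on the resulting separable $h$-patterns, with the paper enumerating the four nonzero $3\times3$ configurations explicitly where you argue slightly more structurally that $E^{1}$ is either concentrated at one vertex or acyclic. The grading bookkeeping you defer does work out exactly as you describe and matches the paper's Case $a$--$d$ computations.
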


\begin{proof}
The proof is quite straightforward by using our computation of $\widehat{HFL}(s_{1}, s_{2})$ in Section 3.  For any $(s_{1}, s_{2})\in\bH$, the possible values for the $h$-function corresponding to $HFK^{-}(L_{1}, s_{1})$ are like:

\begin{figure}[H]
\begin{picture}(100,30)(150,20)
\put(5,40){\line(1,0){80}}
\put(20,39){\circle*{4}}
\put(45,39){\circle*{4}}
\put(70,39){\circle*{4}}
\put(18, 45){$a$}
\put(5, 30){$s_{1}-1$}
\put(43, 30){$s_{1}$}
\put(43, 45){$a$}
\put(61, 30){$s_{1}+1$}
\put(68, 45){$a$}
\put(20, 13) { Case (1)}

\put(115,40){\line(1,0){80}}
\put(130,39){\circle*{4}}
\put(155,39){\circle*{4}}
\put(180,39){\circle*{4}}
\put(117, 45){$a+1$}
\put(115, 30){$s_{1}-1$}
\put(153, 30){$s_{1}$}
\put(153, 45){$a$}
\put(171, 30){$s_{1}+1$}
\put(178, 45){$a$}
\put(130, 13) { Case (2)}

\put(220,40){\line(1,0){80}}
\put(235,39){\circle*{4}}
\put(260,39){\circle*{4}}
\put(285,39){\circle*{4}}
\put(233, 45){$a$}
\put(220, 30){$s_{1}-1$}
\put(258, 30){$s_{1}$}
\put(258, 45){$a$}
\put(276, 30){$s_{1}+1$}
\put(278, 45){$a-1$}
\put(235, 13) { Case (3)}

\put(330,40){\line(1,0){80}}
\put(345,39){\circle*{4}}
\put(370,39){\circle*{4}}
\put(405,39){\circle*{4}}
\put(333, 45){$a+1$}
\put(330, 30){$s_{1}-1$}
\put(368, 30){$s_{1}$}
\put(367, 45){$a$}
\put(386, 30){$s_{1}+1$}
\put(388, 45){$a-1$}
\put(345, 13) { Case (4)}

\end{picture}
\end{figure}

Here $h_{1}(s_{1})=a$ and $a$ is any positive integer. Observe that
$$H_{\ast}(A^{-}(s_{1})/A^{-}(s_{1}-1))\cong HFK^{-}(L_{1}, s_{1})$$
$$\cdots \rightarrow HFK^{-}_{i+2}(s_{1}+1)\xrightarrow{U} HFK^{-}_{i}(s_{1})\rightarrow \widehat{HFK}_{i}(s_{1})\rightarrow HFK^{-}_{i+1}(s_{1}+1)\xrightarrow{U} HFK^{-}_{i-1}(s_{1})\rightarrow \cdots$$
The long exact sequence is induced by the short exact sequence:
$$0\rightarrow CFK^{-}(s_{1}+1)\xrightarrow{U} CFK^{-}(s_{1}) \rightarrow \widehat{CFK}(s_{1}) \rightarrow 0$$
By the long exact sequence above, 

Case (1) $\widehat{HFK}(L_{1}, s_{1})\cong 0$ 

Case (2) $\widehat{HFK}(L_{1}, s_{1})\cong \F[-2a]$ 

Case (3) $\widehat{HFK}(L_{1}, s_{1})\cong \F[-2a+1]$

Case (4) $\widehat{HFK}(L_{1}, s_{1})\cong 0$

Similarly, for the link component $L_{2}$, we assume that $h_{2}(s_{2})=b$. There are also four possibilities for the $h$-function corresponding to $\widehat{HFK}(L_{2}, s_{2})$. By Lemma 5.2, $h(s_{1}, s_{2})=h_{1}(s_{1})+h_{2}(s_{2})$. We find that there are only four possibilities for the $h$-function such that $\widehat{HFL}(L, s_{1}, s_{2})\neq 0$ 

\begin{figure}[H]
\begin{picture}(100,70)(165,0)
\put(1,10){\framebox(100,60)}
\put(15, 20){\makebox(0,0){$h+2$}}
\put(15, 40){\makebox(0,0){$h+1$}}
\put(15, 60){\makebox(0,0){$h+1$}}
\put(50, 20){\makebox(0,0){$h+1$}}
\put(50, 40){\makebox(0,0){$h$}}
\put(50, 60){\makebox(0,0){$h$}}
\put(85, 20){\makebox(0,0){$h+1$}}
\put(85, 40){\makebox(0,0){$h$}}
\put(85, 60){\makebox(0,0){$h$}}
\put(55, 1){\makebox(0,0)[I]{Case $a$}}

\put(110,10){\framebox(100,60)}
\put(125, 20){\makebox(0,0){$h+2$}}
\put(125, 40){\makebox(0,0){$h+2$}}
\put(125, 60){\makebox(0,0){$h+1$}}
\put(160, 20){\makebox(0,0){$h+1$}}
\put(160, 40){\makebox(0,0){$h+1$}}
\put(160, 60){\makebox(0,0){$h$}}
\put(195, 20){\makebox(0,0){$h+1$}}
\put(195, 40){\makebox(0,0){$h+1$}}
\put(195, 60){\makebox(0,0){$h$}}
\put(165, 1){\makebox(0,0)[I]{Case $b$}}

\put(220, 10){\framebox(100, 60)}
\put(235, 20){\makebox(0,0){$h+2$}}
\put(235, 40){\makebox(0,0){$h+1$}}
\put(235, 60){\makebox(0,0){$h+1$}}
\put(270, 20){\makebox(0,0){$h+2$}}
\put(270, 40){\makebox(0,0){$h+1$}}
\put(270, 60){\makebox(0,0){$h+1$}}
\put(305, 20){\makebox(0,0){$h+1$}}
\put(305, 40){\makebox(0,0){$h$}}
\put(305, 60){\makebox(0,0){$h$}}
\put(275, 1){\makebox(0,0)[I]{Case $c$}}

\put(330, 10){\framebox(100, 60)}
\put(345, 20){\makebox(0,0){$h+2$}}
\put(345, 40){\makebox(0,0){$h+2$}}
\put(345, 60){\makebox(0,0){$h+1$}}
\put(380, 20){\makebox(0,0){$h+2$}}
\put(380, 40){\makebox(0,0){$h+2$}}
\put(380, 60){\makebox(0,0){$h+1$}}
\put(415, 20){\makebox(0,0){$h+1$}}
\put(415, 40){\makebox(0,0){$h+1$}}
\put(415, 60){\makebox(0,0){$h$}}
\put(385, 1){\makebox(0,0)[I]{Case $d$}}

\end{picture}
\end{figure}

In Case $a$, the $h$-functions for link components $L_{1}$ and $L_{2}$ are both like Case (2): $(a+1) \quad a \quad a$ and $(b+1) \quad b \quad b$. Then $\widehat{HFL}(s_{1}, s_{2})\cong \F[-2(a+b)] \oplus \F[-2(a+b)-1]$ and $\widehat{HFK}(L_{1}, s_{1})\cong \F[-2a]$ and $\widehat{HFK}(L_{2}, s_{2})\cong \F[-2b]$. So $\widehat{HFL}(L, s_{1}, s_{2})\cong \widehat{HFK}(L_{1}, s_{1})\otimes \widehat{HFK}(L_{2}, s_{2})\otimes (\F\oplus\F_{(-1)})$. 

In Case $b$, the $h$-function for link component $L_{1}$ is like Case (2): $(a+1) \quad a \quad a$ and the $h$-function for $L_{2}$ is like Case (3): $b \quad b \quad b-1$.  In Case $c$, the $h$-function for $L_{1}$ is like  Case (3) and for $L_{2}$, $h$-function is like Case (2). In Case $d$, the $h$-functions for both components are like Case (3). Thus we can use similar argument above to prove that $\widehat{HFL}(L, s_{1}, s_{2})\cong \widehat{HFK}(L_{1}, s_{1})\otimes \widehat{HFK}(L_{2}, s_{2})\otimes (\F\oplus\F_{(-1)})$ in these cases.

If the $h$-function corresponding to $\widehat{HFL}(s_{1}, s_{2})$ is not in the above four cases, $\widehat{HFL}(s_{1}, s_{2})=0$, and at least one of $\widehat{HFK}(L_{1}, s_{1})$ and $\widehat{HFK}(L_{2}, s_{2})$ is zero. Thus the conclusion also holds. 
\end{proof}

\noindent
{\bf Proof of Theorem \ref{split}:}
Let $L=L_{1}\cup L_{2}$ be an $L$-space link with vanishing Alexander polynomial. The linking number of $L_{1}$ and $L_{2}$ is $0$ by Equation (2.1). By Theorem \ref{thm 2}, the Heegaard Floer link homology $\widehat{HFL}(s_{1}, s_{2})$ is determined by $\Delta_{L}(t_{1}, t_{2})$, $\Delta_{L_{1}}(t)$ and $\Delta_{L_{2}}(t)$. So $\widehat{HFL}(L, s_{1}, s_{2})\cong \widehat{HFL}(L_{1}\sqcup L_{2}, s_{1}, s_{2})\cong \widehat{HFK}(L_{1}, s_{1})\otimes \widehat{HFK}(L_{2}, s_{2})\otimes (\F\oplus\F_{(-1)}) $ with any $(s_{1}, s_{2})\in \bH$.

\begin{example}
The link Floer homology polytope for the split disjoint union of two right-handed trefoils.

Let $L=L_{1}\sqcup L_{2}$ be the split disjoint union of two right-handed trefoils. Recall that the right-handed trefoil is an $L$-space knot with Alexander polynomial $\Delta_{L_{1}}(t)=t-1+t^{-1}$, and 
$$\sum\limits_{s_{1}\in \Z} \chi (HFK^{-}(L_{1}, s_{1})) t^{s_{1}}=\dfrac{\Delta_{L_{1}}}{1-t^{-1}}=t+t^{-1}+t^{-2}+t^{-3}+t^{-4}+\cdots$$
Observe the short exact sequence $0\rightarrow A^{-}(s_{1}-1)\rightarrow A^{-1}(s_{1})\rightarrow CFK^{-}(s_{1})\rightarrow 0 $. We have $HFK^{-}(L_{1}, s_{1})=H_{\ast}(A^{-}(s_{1})/A^{-}(s_{1}-1))$, and $\chi(HFK^{-}(L_{1}, s_{1}))=h_{1}(s_{1}-1)-h_{•}(s_{1})$ which is also the coefficient of $t^{s_{1}}$ in $\dfrac{\Delta_{L_{1}}(t)}{1-t^{-1}}$. Since $L_{1}$ is an $L$-space knot. $h_{1}(s_{1})=0$ for sufficiently large $s_{1}\gg 0$. So the $h$-function $h_{1}(s_{1})$ can be determined as follows:
$$\cdots \quad 7, 6, 5, 4, 3, 2, 1, 1, 0, 0, 0, 0, 0, \cdots$$
where $h_{1}(0)=h_{1}(-1)=1$, $h_{1}(s)=0$ if $s\geq 1$ and $h_{1}(s)=-s$ if $s\leq -1$. Similarly, for another right-handed trefoil $L_{2}$, the $h$-function $h_{2}(s_{2})$ is the same with $h_{1}(s_{1})$. By Proposition 5.4,  we can find all $(s_{1}, s_{2})\in \bH$ where $\widehat{HFL}(L, s_{1}, s_{2})$ are nonzero. So $\widehat{HFL}(L, 1, 1)=\F[0]\oplus \F[-1]$, $\widehat{HFL}(L, 0, 1)=\widehat{HFL}(L, 1, 0)=\F[-1]\oplus \F[-2]$, $\widehat{HFL}(L, -1, 1)=\widehat{HFL}(L, 0, 0)=\widehat{HFL}(L, 1, -1)=\F[-2]\oplus \F[-3]$ and $\widehat{HFL}(L, -1, 0)=\widehat{HFL}(L, 0, -1)=\F[-3]\oplus \F[-4]$, $\widehat{HFL}(L, -1, -1)=\F[-4]\oplus \F[-5]$. For other lattice points $(s_{1}, s_{2})\in \bH$, $\widehat{HFL}(L, s_{1}, s_{2})=0$. Thus the link Floer homology polytope is a square in Figure 14.

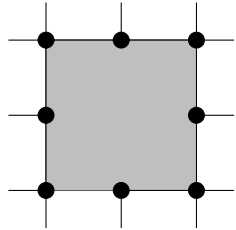
\begin{figure}[H]
\begin{tikzpicture}
\draw (-1.5,1)--(1.5, 1);
\draw (-1.5,-0)--(1.5,0);
\draw (-1.5,-1)--(1.5,-1);
\draw (-1,1.5)--(-1,-1.5);
\draw (0,1.5)--(0,-1.5);
\draw (1,1.5)--(1,-1.5);
\fill [fill=lightgray] 
(-1,-1)--(-1,0)--(-1,1)--(0,1)--(1,1)--(1,0)--(1,-1)--(0,-1);
\filldraw
(-1, -1) circle (3pt) -- (-1, 0) circle (3pt) -- (-1, 1) circle (3pt) -- (0, 1) circle (3pt) -- (1, 1) circle (3pt) -- (1, 0) circle (3pt) --(1, -1) circle (3pt) --(0, -1) circle (3pt);
 \end{tikzpicture}
\caption{ Link Floer homology polytope for $L$}
\label{L2}
\end{figure}

\end{example}

\begin{remark}
In general, let $L=L_{1}\sqcup L_{2}$ be the split union of any two $L$-space knots. Recall that the genus of a knot $K$ is defined as:
$$g(K)=\textup{min}\lbrace \textup{genus}(F) \mid F\subseteq S^{3} \textup{ is an oriented, embedded surface with } \partial F=K  \rbrace$$
Here $L_{1}$ and $L_{2}$ are both $L$-space knots, so $\widehat{HFK}(L_{1}, g(L_{1}))\cong \Z,   \widehat{HFK}(L_{2}, g(L_{2}))\cong \Z$  \cite[Theorem 1.2]{OS} and $g(L_{i})=\textup{max}\lbrace s\geq 0 \mid \widehat{HFK}_{\ast}(L_{i}, s)\neq 0 \rbrace$ for $i=1$ and $i=2$ \cite[Theorem 1.2]{os4}. The link Floer homology polytope of $L_{i}$ is the interval $[-g(L_{i}), g(L_{i})]$ where $i=1$ or $2$. By Proposition 5.4, the link Floer homology polytope for $L=L_{1}\sqcup L_{2}$ is a rectangle with vertices $(g(L_{1}), g(L_{2})), (g(L_{1}), -g(L_{2})), (-g(L_{1}), g(L_{2}))$ and $(-g(L_{1}), -g(L_{2}))$ (see Figure 14). 
\end{remark}

\end{document}